\documentclass{amsart}

\usepackage[
  top=1in,
  bottom=1in,
  left=1.25in,
  right=1.25in,
  heightrounded,
]{geometry}

\author{Kenneth T-R McLaughlin}
\address{Colorado State University}

\author{Patrik V. Nabelek}
\address{Oregon State University}
\email{patrik@alyrica.net}

\date{\today}

\usepackage{amsmath}
\usepackage{amssymb}
\usepackage{amsthm}
\usepackage{graphicx}

\newtheorem{thm}{Theorem}[section]
\newtheorem{lma}[thm]{Lemma}
\newtheorem{prop}[thm]{Proposition}
\newtheorem{defn}[thm]{Definition}
\newtheorem{clm}[thm]{Claim}
\newtheorem{rhp}[thm]{Riemann--Hilbert Problem}
\newtheorem{cor}[thm]{Corollary}

\newtheorem{rmk}[thm]{Remark}

\newcommand{\bmat}{\begin{pmatrix}}
\newcommand{\emat}{\end{pmatrix}}
\newcommand\numberthis{\addtocounter{equation}{1}\tag{\theequation}}

\begin{document}

\title[Infinite Gap Riemann--Hilbert Problem]{A Riemann--Hilbert Problem Approach to Periodic Infinite Gap Hill Operators and the Korteweg--de Vries Equation}

\maketitle

\begin{abstract}
We formulate the inverse spectral theory of infinite gap Hill's operators with bounded periodic potential as a Riemann--Hilbert problem on a typically infinite collection of spectral bands and gaps.  We establish a uniqueness theorem for this Riemann--Hilbert problem, which provides a new route to establishing unique determination of periodic potentials from spectral data. As the potential evolves according to the KdV equation, we use integrability to derive an associated Riemann--Hilbert problem with explicit time dependence.  Basic principles from the theory of Riemann--Hilbert problems yield a new characterization of spectra for periodic potentials in terms of the existence of a solution to a scalar Riemann--Hilbert problem, and we derive a similar condition on the spectrum for the temporal periodicity for an evolution under the KdV equation.
\end{abstract}

\section{Introduction}

\sloppy
A Hill's operator is a one-dimensional Schr\"{o}dinger operator with periodic potential,  \begin{equation}L : H^2(\mathbb{R}) \to L^2(\mathbb{R})\ , \end{equation} \begin{equation}L = -\frac{\partial^2}{\partial x^2} + u(x),\end{equation} where $u \in L^\infty (\mathbb{R})$ is real valued with period $T$.
The spectrum $\sigma(L)$ is a half line with a possibly countably infinite number of real open intervals (gaps) removed, with the endpoints of the gaps lying at interlaced periodic and antiperiodic real eigenvalues \cite{MaWi79}.
To be explicit, we write
\begin{equation} \sigma(L) = \bigcup_{n = 0}^\infty [\lambda_{2n},\lambda_{2n+1}] \end{equation}
where $\{\lambda_n\}$ is the nondecreasing sequence of the periodic/antiperiodic eigenvalues.
Without loss of generality we can assume $\lambda_0 = 0$, because if this is not the case we can always make the transformation to a new potential $\tilde u$ defined by $\tilde u(x) = u(x)-\lambda_0$.
It was proven by Simon that a half line with countably infinite gaps is the generic case \cite{Si76}.  

The 1-D Schr\"{o}dinger equation and various versions of its spectral and inverse spectral theory are elements of solution procedures for the Korteweg--de Vries (KdV) equation, 
\begin{eqnarray}
\label{eq:kdv}
u_{t} - 6 u u_{x} + u_{xxx} = 0 \ .
\end{eqnarray}
The operator $L$ with potential $u$ depending parametrically on $t$ was discovered to have spectra that remained constant as $u$ evolved, and led to the Lax-pair formalism \cite{GGKM, Lax}.  Briefly, the KdV equation (\ref{eq:kdv}) arises as the compatibility condition for a simultaneous solution of two differential equations,
\begin{eqnarray}
&&L \psi = \lambda \psi, \\
& & \psi_{t} = A \psi,
\end{eqnarray}
where the operator $A$ takes the form
\begin{eqnarray}
A  = -4 \frac{\partial^3}{\partial x^3} + 6 u(x,t) \frac{\partial}{\partial x} + 3 u_x(x,t).
\end{eqnarray}

When the potential $u$ is assumed to be rapidly decaying as $|x| \to \infty$, the scattering transformation from quantum mechanics completely linearizes the flow, in the sense that the reflection coefficient and the norming constants (associated to $L^{2}$ eigenvalues) evolve from their initial values in a simple manner, and the solution $u(x,t)$ is determined via the inverse scattering machinery.  This connection has led, over the past 50 years, to many results concerning the detailed analysis and asymptotic behavior of solutions to the KdV equation with decaying boundary conditions, including soliton and multi-soliton solutions, small dispersion behavior, and long-time asymptotic analysis.  All of this work relies implicitly on analytical results concerning the unique determination of the potential of the Schr\"{o}dinger equation from the associated scattering data.

When the potential $u$ is periodic, the spectrum $\sigma(L)$ is independent of $t$ but $\sigma(L)$ alone does not uniquely determine the potential $u$.  A well-posed inverse spectral problem is formulated by considering $\sigma(L)$ together with the spectrum of the operator  \begin{equation}L_0 : H_0^2 ([0,T]) \to L^2([0,T])\end{equation} defined as \begin{equation}L_0 = - \frac{\partial^2}{\partial x^2} + u(x),\end{equation} where $H_0^2 ( [0,T])$ is the Hilbert subspace of $H^2 ([0,T])$ functions with Dirichlet boundary conditions \cite{MckMo75}.
The spectrum $\sigma(L_0)$ consists of one eigenvalue $\mu_n$ in each (possibly degenerate) gap \cite{LeSa75}.  A Dirichlet eigenfunction on $(0,T)$ extends naturally to a solution on $\mathbb{R}$.  When $\mu_{n}$ is in the interior of a gap, the eigenfunction must either decay or grow as $x \to + \infty$.
We set $\sigma_n = 1$ when the eigenfunction decays, and $\sigma_n = -1$ when the eigenfunction grows.
We call $\sigma_n$ the signature of $\mu_n$.
For notational convenience, we set $\sigma_n = 0$ when $\mu_n$ is on a band end.
The potential $u$ is uniquely determined by $\sigma(L)$ together with $\sigma(L_{0})$ and the signature $\sigma_n$ for each $\mu_{n}$ in the interior of the gaps.  As $u$ evolves in time according to the KdV equation, the Dirichlet eigenvalues that reside within non-degenerate gaps are not constant, but evolve in time, crossing back and forth across the gap.  

The case of periodic solutions to the KdV equation is also well developed, and has an enormous literature.  
A complete understanding of the collection of all ``finite gap'' solutions (when the spectrum $\sigma(L)$ has only a finite number of gaps) was established in the 1970s in works of Novikov \cite{No74}, Marchenko \cite{Mar74}, Matveev \cite{Mat75}, Its--Matveev \cite{ItsMatv75a,ItsMatv75b,ItsMatv76}, Dubrovin \cite{Dubrovin75}, Lax \cite{Lax75}, and McKean--van Moerbeke \cite{MckMo75}.
For further details on the history of the finite gap theory see \cite{Beetal94, Mat08}.
A characterization of the spectra of infinite gap periodic Hill's operators was discovered by Mar\v{c}henko--Ostrovski\u{i} \cite{MaOs75}.
Analysis of the infinite-gap potentials continued with the work of McKean--Trubowitz \cite{MckTr76,MckTr78} who studied their geometry, constructed an infinite-gap hyperelliptic function theory, and showed that for periodic $C^\infty$ initial data the finite gap theory actually extends to the infinite gap potentials. Several different global coordinate transformations  have been constructed under which the KdV flow is mapped to periodic flow on an infinite-dimensional torus, and this has led to KAM-type results concerning perturbed KdV equations (see \cite{KaPo03} and the many references contained therein).  Ercolani and McKean \cite{ErMck90} discuss how the inverse scattering theory for Schr\"{o}dinger operators on the whole line with decaying potential and no bound states could be interpreted as a continuum genus limit of the infinite gap theory of McKean and Trubowitz. Fast methods for computing finite gap potentials using a Fourier series representation and fast Fourier transforms have also been developed by Osborne \cite{Osborne2010}, and he has applied these fast methods to time series analysis of ocean waves.

For infinite gap potentials, computation of the map from spectral variables to the infinite genus torus requires solving an infinite genus Jacobi inversion problem.
The solvability of the infinite genus Jacobi inversion problem leads to strong existence, uniqueness and qualitative results for the behavior of periodic solutions to the KdV equation.
However, the infinite dimensional Riemann matrix needs to be explicitly computed to use the Matveev--Its formula in the case of infinite genus hyper-elliptic surfaces.

In this paper we will present a Riemann--Hilbert problem formulation of the Cauchy problem for the KdV equation that applies to infinite genus initial conditions, and establish a uniqueness result for the solution of this Riemann-Hilbert problem.
The Riemann--Hilbert problem avoids the requirement that the Riemann matrix be computed by inverting a period matrix for a basis of Abelian differentials on an infinite genus hyper-elliptic curve.
The complete set of spectral data $\Sigma(u)$ can be computed if the fundamental matrix solution $Y(x,\lambda)$ is known.
The building blocks for this Riemann-Hilbert problem are the Bloch--Floquet solutions to Hill's equation; these are constructed in Section 2 as solutions $\psi^{+}$ and $\psi^{-}$ that satisfy the initial condition
\begin{eqnarray}
\psi^{\pm}(0, \lambda) = 1  , 
\end{eqnarray}
and the multiplicative condition 
\begin{eqnarray}
&&\psi^{+}(x+T, \lambda) = \rho(\lambda)\psi^{+}(x,\lambda), \\ 
&& \psi^{-} (x+T, \lambda) = \rho(\lambda)^{-1} \psi^{-}(x,\lambda)   , 
\end{eqnarray}
with $\rho(\lambda)$ being the root of the characteristic polynomial (\ref{eq:charpoly}) that satisfies $| \rho(\lambda) | < 1$.  We let $\Psi$ denote the matrix fundamental solution built out of these two independent solutions.  If it is assumed $\mu_n \in (\lambda_{2n-1},\lambda_{2n})$, then it follows from the representation (\ref{eq:bloflo}) appearing in section 2 and analyticity properties of the constituent function of this representation that the matrix $\Psi(x,\lambda)$ has a pole at each Dirichlet eigenvalue $\mu_{n}$; the poles could occur in either the first or second column, depending on whether $\psi^{+}$ or $\psi^{-}$ is singular at $\mu_{n}$.
This singular behavior is elaborated on in section 5.

In section 3 we establish the relevant asymptotic properties of the various solutions of Hill's differential equation that are used in subsequent sections.  The eventual uniqueness theorem established in Section 5 requires the use of the  Phr\'{a}gmen--Lindel\"{o}f Theorem.  To apply the Phr\'{a}gmen--Lindel\"{o}f theorem, one must establish a bound of the form $|f(\lambda)| \le M e^{c|\lambda|^p}$ in the complex plane.
In section 4 we prove some propositions that will then be used to establish bounds of this form in sections 5 and 6.

We show in Section 5 that the Bloch--Floquet solutions can be recovered from the entries of the first row of a solution to a Riemann-Hilbert problem.
For brevity of presentation, we will consider the generic case with no degenerate spectral gaps when we present the Riemann--Hilbert problem here in the introduction.
This Riemann--Hilbert problem will be presented here in the introduction with a slightly different notation than in section 5.
The reason for the more complicated notation used in section 5 is to allow application to the general case where degenerate gaps are allowed.

To state the Riemann--Hilbert problem we introduce some notation.
We will use $\mathbb{R}^+$ to refer to the closed positive real axis.
Consider the functions $f^\pm$ defined on $\mathbb{C}$ determined by the Dirichlet eigenvalues $\mu_n$ and signatures $\sigma_n$ by
\begin{equation} f^+ (\lambda) := \prod_{\substack{n = 1 \\ \sigma_n = 1}}^\infty \frac{T^2}{n^2 \pi^2} (\mu_n - \lambda), \end{equation}
\begin{equation} f^- (\lambda) := \prod_{\substack{n = 1 \\ \sigma_n = -1}}^\infty \frac{T^2}{n^2 \pi^2} (\mu_n - \lambda). \end{equation}
We use $f^\pm$ to define a matrix function $B$ defined on $\mathbb{C} \setminus \mathbb{R}^+$ determined by the periodic/antiperiodic eigenvalues $\lambda_n$, Dirichlet eigenvalues $\mu_n$, and signatures $\sigma_n$ by
\begin{equation} \label{eq:defBintro} B(\lambda) = \frac{i \sqrt{\displaystyle{\prod_{\substack{n = 1 \\ \sigma_n = 0}}^\infty} \frac{T^2}{n^2 \pi^2} (\mu_n - \lambda)}}{\sqrt[4]{4(\lambda_0 - \lambda)\displaystyle{\prod_{n = 1}^\infty} \frac{T^4}{n^4 \pi^4} (\lambda_{2n}-\lambda)(\lambda_{2n-1}-\lambda)}} \begin{pmatrix} f^-(\lambda) & 0 \\ 0 & f^+(\lambda)\end{pmatrix}. \end{equation}
The branches of the square root and the quartic root of the infinite products appearing in $B$ are defined explicitly in remark \ref{eq:rmkrootRHP}.
The jump function $V$ is defined on $\mathbb{R}^+ \setminus \{\lambda_n\}_{n = 0}^\infty$ and determined by the same spectral data as $B$ by
\begin{equation} V (\lambda) := \begin{cases} (-1)^{k + m(\lambda) - 1} \begin{pmatrix} 0 & i \frac{f^+(\lambda)}{f^-(\lambda)} \\ i \frac{f^-(\lambda)}{f^+(\lambda)} & 0 \end{pmatrix} & \lambda \in (\lambda_{2n-2},\lambda_{2n-1})  \\ (-1)^{k + m(\lambda)} e^{2 i \sigma_3 \sqrt{\lambda} x} & \lambda \in (\lambda_{2n-1},\lambda_{2n}) \end{cases}. \end{equation}
The jump function is defined in terms of the counting function $m:\mathbb{R} \to \mathbb{Z}$ for the Dirichlet eigenvalues on the edges of non-degenerate gaps given by
\begin{equation} m(\lambda) := |\{ n \in \mathbb{N} : \mu_{n} \le \lambda, \sigma_{n} = 0 \}| . \end{equation}
The matrix $\sigma_3 = \begin{pmatrix} 1 & 0 \\ 0 & -1 \end{pmatrix}$ appearing in the definition of the jump matrix is the third Pauli matrix, and should not be confused with the signatures $\sigma_n$.
Finally, we define some discs $D_n$ with radius ${\max\{\lambda_{2n}-\lambda_{2n-1}\}_{n=1}^\infty}$ centered at $(\lambda_{2n}+\lambda_{2n-1})/2$ for $n \ge 0$ and centered at $\lambda_0 = 0$ for $n = 0$.
We use the discs $D_n$ to define the domain
\begin{equation} \mathcal{D} = \mathbb{R}^+ \cup \bigcup_{n = 0}^\infty D_n.\end{equation}
We also must define a subset $\Omega_s \subset \mathbb{C}$ as the complex plane with a sector of angle $2s$ around the positive real axis removed. This set is defined explicitly in (\ref{eq:defsector}). 
The Riemann--Hilbert problem is then

\begin{rhp}
For some $x \in \mathbb{R}$ find a $2 \times 2$ matrix valued function $\Phi(x,\lambda)$ such that:
\begin{enumerate}

\item $\Phi(x,\lambda)$ is a holomorphic function of $\lambda$ for $\lambda \in \mathbb{C} \setminus \mathbb{R}^+$.

\item $\Phi_{\pm} (x,\lambda)$ are continuous functions of $\lambda \in \mathbb{R}^+ \setminus \{\lambda_n\}_{n = 0}^{\infty}$ that have at worst quartic root singularities on $\{\lambda_n\}_{n = 0}^{\infty}$.

\item $\Phi_{\pm} (x,\lambda)$ satisfy the jump relation $\Phi_+(x,\lambda) = \Phi_-(x, \lambda)  V (x,\lambda)$.

\item $\Phi (x,\lambda)$ has an asymptotic description of the form
\begin{equation} \Phi(x,\lambda) =  \begin{pmatrix} 1 & 1 \\ -i \sqrt{\lambda} & i \sqrt{\lambda} \end{pmatrix} \left( I + O\left(\sqrt{\lambda}^{-1} \right) \right) B(\lambda) \end{equation}
as $\lambda \to \infty$ with $\lambda \in \Omega_s$ for some {$0 < s < \tfrac{\pi}{8}$}.

\item There exist positive constants $c$, and $M$ such that $|\phi_{ij} (x,\lambda)| \le M e^{c|\lambda|^2} $ for all $\lambda \in \mathcal{D}$.
\end{enumerate}
\end{rhp}
We subsequently prove Theorem \ref{thm:rhpPhi}, which asserts that the first row of the solution $\Phi$ is unique.
Solutions $\psi^\pm$ to the Schr\"{o}dinger equation $-{\psi^{\pm}}''(x,\lambda) + u(x) \psi^\pm(x,\lambda) = \lambda \psi^\pm(x,\lambda)$ and the potential $u(x)$ can be computed from any solution to the Riemann--Hilbert problem by
\begin{equation} \label{eq:intropsi} \psi^-(x,\lambda) = b_{11}(\lambda)^{-1} e^{-i\sqrt{\lambda}x} \phi_{11}(x,\lambda), \psi^+(x,\lambda) = b_{22}(\lambda)^{-1} e^{i\sqrt{\lambda}x} \phi_{12}(x,\lambda), \end{equation}
and
\begin{equation} \label{eq:intropot} u(x) = 2 i \sqrt{\lambda} \frac{\partial}{\partial x} \lim_{\lambda \to \infty} \left(1 - b_{11}(\lambda)^{-1} \phi_{11}(x,\lambda)\right). \end{equation}
In this paper we rely on Bloch--Floquet theory to guarantee existence of solutions to the Riemann--Hilbert problem.
However, we briefly discuss extension of the uniqueness theory of the first row of solutions to the Riemann--Hilbert problem in which $\lambda_{k}$ and $\mu_j$ correspond to a non-periodic potential, but the Riemann--Hilbert problem is assumed to have a solution.
If the Riemann--Hilbert has a solution, formulas (\ref{eq:intropsi},\ref{eq:intropot}) still give pairs of solutions $\psi^\pm$ to the Schr\"{o}dinger equation and corresponding potential $u(x)$.
However, the solutions have no interpretation in terms of Floquet theory.
It seems likely that the potential $u$ recovered from the non-periodic spectral data has spectrum
\begin{equation} \sigma(L) = \bigcup_{n = 0}^\infty [\lambda_{2n},\lambda_{2n+1}]\end{equation}
although $\lambda_j$ would have no interpretation in terms of periodic/antiperiodic eigenvalues.

The bound in condition 5 is not common in the theory of Riemann--Hilbert problems.  The point of this bound is to allow application of the Phragm\'{e}n--Lindelof theorem in the proof of uniqueness for the Riemann--Hilbert problem.
This is necessary because the asymptotic condition 4 is only uniform in $\lambda$ as $\lambda \to \infty$ within $\Omega_{s}$ (it has not been proven to be uniform in $x$), which excludes a sector around the positive real axis.

In Section 6 we establish a Riemann-Hilbert characterization for the KdV equation with periodic, smooth, infinite gap initial conditions - an existence theorem as well as a uniqueness theorem.  More precisely, we consider the evolution of $u$ under the KdV equation, and study the evolution of the normalized Bloch--Floquet solutions to Hill's equation.  As mentioned above, the quantities $\psi^{\pm}(x, t,\lambda)$ possess poles at the Dirichlet eigenvalues, and these poles move about in the gaps between spectral bands.  While the quantities $\psi^{\pm}(x, t, \lambda)$ do not simultaneously solve both differential equations in the Lax pair, we use the Lax pair to deform them in such a way that the new quantities, $\breve{\psi}^{\pm}(x,t,\lambda)$ do solve both differential equations.  Moreover, the poles of $\breve{\psi}^{\pm}(x,t,\lambda)$ are also independent of time.  And, finally, these are explicitly related to Riemann-Hilbert problem \ref{rhp:KdV}, in which the dependence on $x$ and $t$ is completely explicit.  The existence theorem relies on well-known existence theory for the KdV equation, and the uniqueness theorem is a slight extension of the uniqueness theorem of Section 5.

In the case of finitely many gaps, Riemann-Hilbert formulations of the inverse problem have been considered before.  For example, in \cite{TrDe13,TrDe13b} Deconinck and Trogdon used a Riemann-Hilbert problem satisfied by Baker--Akhiezer functions to numerically compute finite gap solutions of the KdV equation.
Its also introduced a Riemann--Hilbert problem formulation for the inverse spectral theory of finite gap Dirac operators and corresponding solutions to the nonlinear Schrödinger equation \cite{Its84}.

To reduce technical complications, in Section 6 we restrict our periodic potential to be $C^\infty$ to allow the use of known existence theorems.  However, since we used very general bounds on the potentials to derive the Riemann--Hilbert problem for the inverse spectral theory, we believe the Riemann--Hilbert problem for solutions to the KdV equation should apply to weak solutions with periodic initial conditions in $L^\infty(\mathbb{R})$.

In section 7 we use the results of sections 5 and 6 to sketch proofs of conditions for spatial and temporal periodicity of solutions to the KdV equation.
In section 8 we connect the uniqueness results for the first row of solutions to Riemann--Hilbert problems \ref{rhp:Phi} and \ref{rhp:KdV} to a uniqueness results for the corresponding spatially periodic infinite gap Baker--Akhiezer functions.
Finally, in section 9 we discuss the relation of the infinite gap theory discussed in this paper to the well known finite gap theory.

\section{Floquet Theory}

In this section we review some aspects of the forward spectral theory of Hill's operators that are relevant to our discussion of the inverse spectral theory in subsequent sections.
This review combines a combination of results from \cite{MaWi79} and \cite{LeSa75}, and all but one of the relevant proofs can be found in PN's dissertation \cite{MyDiss}.
For details on the spectral theory of Hill?s operators also see \cite[chapter 21]{Ti58}.
The forward spectral problem for Hill's operators can be solved by considering the behavior of solutions to Hill's equation
\begin{equation} -y''(x) + u(x) y(x) = \lambda y(x) \end{equation}
{ with $\lambda \in \mathbb{C}$.}
We will call the parameter $\lambda$ appearing in Hill's equation the spectral parameter.
To analyze Hill's equation, we construct the matrix fundamental solution
\begin{equation} Y(x,\lambda) = \begin{pmatrix} y_1(x,\lambda) & y_2(x,\lambda) \\ y_1'(x,\lambda) & y_2'(x,\lambda) \end{pmatrix} ,\end{equation}
where $y_i$ solve Hill's equation and $Y$ is normalized so that $Y(0,\lambda) = I$ (the $'$ indicates a partial derivative with respect to $x$).
The large $|x|$ behavior of solutions to Hill's equation is intimately connected to the eigenvalue problem
\begin{equation} \label{eq:eigvalpro} Y(T,\lambda) v = \rho v, \end{equation}
because if $\rho$ and $v$ are an eigenvalue/eigenvector pair then
\begin{equation} y(x) = v_1 y_1(x,\lambda) + v_2 y_2 (x,\lambda) \end{equation}
is a solution to Hill's equation that factors as $y(x) = \rho^{x T^{-1}} p(x)$ for some function $p$ with period $T$.
This fact is commonly known as the Bloch or Floquet theorem, and such a solution $y$ we will call a Bloch--Floquet solution.

The characteristic polynomial for the eigenvalue problem (\ref{eq:eigvalpro}) is
\begin{equation} \label{eq:charpoly} p_c(\rho) = \rho^2 - \Delta(\lambda) \rho + 1, \end{equation}
which has been written in terms of the Floquet discriminant $\Delta(\lambda) = \text{Tr}(Y(T,\lambda))$.
Since the characteristic polynomial $p_c$ is of degree two with constant coefficient one, it has two roots which are multiplicative inverse of each other.
We define $\rho(\lambda)$ to be the root of $p_c$ analytic for $\lambda \in \mathbb{C} \setminus \sigma(L)$ such that $|\rho(\lambda)|<1$ for $\lambda \in \mathbb{C} \setminus \sigma(L)$.  This function has boundary values (from above or below) on any subinterval of the spectrum of $L$, and the boundary values (denoted $\rho_{\pm}$) satisfy $| \rho_{\pm}(\lambda)| = 1$ for $\lambda \in \sigma(L)$.
It is well known \cite{MaWi79} that the condition $\lambda \in \sigma(L)$ is satisfied if and only if $\Delta(\lambda) \in [-2,2]$.
The second root of $p_c$ for $\lambda \in \mathbb{C} \setminus \sigma(L)$ is then necessarily $\rho(\lambda)^{-1}$ which satisfies $|\rho(\lambda)|^{-1} > 1$.
The roots $\rho(\lambda)$ and $\rho(\lambda)^{-1}$ can be computed as the fixed points of the maps
\begin{equation} \rho \to \frac{1 + \rho^2}{\Delta(\lambda)}, \text{ and } \rho \to \Delta(\lambda) - \frac{1}{\rho} \end{equation}
respectively.

The quadratic formula tells us that
\begin{equation} \label{eq:rhodelta} \rho(\lambda) = \frac{\Delta(\lambda) - \sqrt{\Delta(\lambda)^2 - 4}}{2} \end{equation}
for some choice of $\sqrt{\Delta(\lambda)^2 - 4}$, and the reader may verify that $\sqrt{\Delta(\lambda)^2-4}$ may be taken to be holomorphic on $\mathbb{C} \setminus \sigma(L)$ and positive for $\lambda < \lambda_0$.   With this definition of $\sqrt{\Delta(\lambda)^2-4}$, we also have
\begin{equation} \label{eq:rhoinvdelta} \rho(\lambda)^{-1} = \frac{\Delta(\lambda) + \sqrt{\Delta(\lambda)^2 - 4}}{2} .\end{equation}

We will consider the family of Bloch--Floquet solutions $\psi^\pm(x, \lambda)$ to the Hill's equation  uniquely determined by the condition $\psi^\pm(x+T,\lambda) = \rho(\lambda)^{\pm 1} \psi^\pm(x,\lambda)$ and the normalization $\psi^\pm (0, \lambda) = 1$.
The functions $\psi^\pm$ can be expressed in terms of $y_i$ and $\rho$ as
\begin{equation} \label{eq:bloflo} \psi^\pm (x,\lambda) = y_1(x,\lambda) + \frac{\rho(\lambda)^{\pm 1} - y_1(T,\lambda)}{y_2(T,\lambda)} y_2(x,\lambda), \end{equation}
because the vector valued functions
\begin{equation}v^\pm(\lambda) = \left( 1,  \frac{\rho(\lambda)^{\pm 1} - y_1(T, \lambda)}{y_2(T, \lambda)} \right)^T\end{equation}
solve $(Y(T,\lambda) - \rho(\lambda)^{\pm 1})v^\pm(\lambda) = 0$ for all $\lambda \in \mathbb{C} \setminus {(\sigma(L) \cup \sigma(L_0))}$.

Analyticity properties of $\psi^{\pm}$ in $\lambda$ follow from the fact that the fundamental matrix solution $Y$ is entire, and the analyticity properties of the function $\rho$.  This will be explored further and used in Section 5.

The asymptotic behavior of the above Bloch--Floquet solutions is related to the potential $u$ by the following lemma, which is proved in section 3:
\begin{lma} \label{lma:BFass}
For all $s > 0$ and fixed $x$, $\psi^\pm (x,\lambda)$ and ${\psi^\pm}'(x,\lambda)$ have asymptotic descriptions of the form
\begin{equation} \label{eq:asspsi} \psi^+ (x,\lambda) = e^{i \sqrt{\lambda} x} \left(1 + \frac{1}{2 i \sqrt{\lambda}} \int_0^x u(t) dt + O(\lambda^{-1}) \right), \end{equation}
\begin{equation*} \psi^- (x,\lambda) = e^{-i \sqrt{\lambda} x} \left(1 - \frac{1}{2 i \sqrt{\lambda}} \int_0^x u(t) dt + O(\lambda^{-1}) \right), \end{equation*}
and
\begin{equation} {\psi^+}' (x,\lambda) =e^{i \sqrt{\lambda} x} \left(  i \sqrt{\lambda} + O(1) \right), \end{equation}
\begin{equation*} {\psi^-}' (x,\lambda) =e^{-i \sqrt{\lambda} x} \left(  -i \sqrt{\lambda} + O(1) \right), \end{equation*}
as $\lambda \to \infty$, valid for $\lambda \in \Omega_s$ where
\begin{equation}\label{eq:defsector} { \Omega_s := \{\lambda \in \mathbb{C} : |s< \arg(\lambda) < 2 \pi - s \}}. \end{equation}
\end{lma}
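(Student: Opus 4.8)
The plan is to derive the asymptotics from the Volterra integral equation for the fundamental solutions $y_1, y_2$, together with the known asymptotics of the Floquet discriminant $\Delta(\lambda)$ and the root $\rho(\lambda)$. First I would record that each solution $y$ of Hill's equation with given initial data satisfies
\begin{equation}
y(x,\lambda) = y_{\mathrm{free}}(x,\lambda) + \frac{1}{\sqrt{\lambda}}\int_0^x \sin\!\bigl(\sqrt{\lambda}(x-t)\bigr)\, u(t)\, y(t,\lambda)\, dt,
\end{equation}
where $y_{\mathrm{free}}$ solves the $u\equiv 0$ problem with the same initial data. For $\lambda\in\Omega_s$ one has $\sqrt{\lambda}$ bounded away from the positive real axis, so $|\sin(\sqrt{\lambda}(x-t))|$ and $|\cos(\sqrt{\lambda}(x-t))|$ are controlled by $e^{|\mathrm{Im}\sqrt{\lambda}|\,|x-t|}$, and a standard Gronwall/iteration argument on this Volterra equation (uniform for $t$ in the compact interval $[0,x]$) gives
\begin{equation}
y_1(x,\lambda) = \cos(\sqrt{\lambda}x) + \frac{1}{2\sqrt{\lambda}}\sin(\sqrt{\lambda}x)\int_0^x u(t)\,dt + O(\lambda^{-1}),\qquad
y_2(x,\lambda) = \frac{\sin(\sqrt{\lambda}x)}{\sqrt{\lambda}} + O(\lambda^{-1}),
\end{equation}
together with the differentiated versions $y_1'(x,\lambda) = -\sqrt{\lambda}\sin(\sqrt{\lambda}x) + O(1)$ and $y_2'(x,\lambda) = \cos(\sqrt{\lambda}x) + O(\lambda^{-1/2})$, all uniform on $\Omega_s$ for fixed $x$. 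In particular, setting $x = T$ gives $\Delta(\lambda) = y_1(T,\lambda)+y_2'(T,\lambda) = 2\cos(\sqrt{\lambda}T) + O(\lambda^{-1/2})$.

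Next I would plug these into the explicit formula \eqref{eq:bloflo} for $\psi^\pm$. The coefficient of $y_2$ is $\bigl(\rho(\lambda)^{\pm 1} - y_1(T,\lambda)\bigr)/y_2(T,\lambda)$, so I need the asymptotics of $\rho(\lambda)$. From \eqref{eq:rhodelta}, $\rho(\lambda) = \tfrac12\bigl(\Delta - \sqrt{\Delta^2-4}\bigr)$; with $\Delta = 2\cos(\sqrt{\lambda}T) + o(1)$ and the branch of $\sqrt{\Delta^2-4}$ fixed so that $|\rho|<1$ on $\Omega_s$, a short computation gives $\rho(\lambda) = e^{i\sqrt{\lambda}T}(1+o(1))$ in the upper-half portion of $\Omega_s$ (with $\mathrm{Im}\sqrt{\lambda}>0$ so that $|e^{i\sqrt{\lambda}T}|<1$), and more precisely $\rho(\lambda)^{\pm1} = e^{\pm i\sqrt{\lambda}T}(1 + O(\lambda^{-1/2}))$ after tracking the next-order term in $\Delta$. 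Then
\begin{equation}
\frac{\rho(\lambda)^{\pm1} - y_1(T,\lambda)}{y_2(T,\lambda)}
= \frac{e^{\pm i\sqrt{\lambda}T} - \cos(\sqrt{\lambda}T) + O(\lambda^{-1/2})}{\sqrt{\lambda}^{-1}\sin(\sqrt{\lambda}T) + O(\lambda^{-1})}
= \pm i\sqrt{\lambda} + O(1),
\end{equation}
using $e^{\pm i\theta} - \cos\theta = \pm i\sin\theta$. Substituting this and the expansions of $y_1(x,\lambda)$, $y_2(x,\lambda)$ back into \eqref{eq:bloflo} and collecting terms, the combination $\cos(\sqrt{\lambda}x) \pm i\sin(\sqrt{\lambda}x) = e^{\pm i\sqrt{\lambda}x}$ emerges, and the $O(\lambda^{-1/2})$ correction to the $y_2$-coefficient multiplies $y_2 = O(\lambda^{-1/2})$ to contribute at order $O(\lambda^{-1})$, leaving exactly
\begin{equation}
\psi^+(x,\lambda) = e^{i\sqrt{\lambda}x}\Bigl(1 + \frac{1}{2i\sqrt{\lambda}}\int_0^x u(t)\,dt + O(\lambda^{-1})\Bigr),
\end{equation}
and symmetrically for $\psi^-$; differentiating in $x$ (which is legitimate since the Volterra estimates can be carried out for $y_i'$ directly) gives the stated asymptotics for ${\psi^\pm}'$.

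The main obstacle I anticipate is the uniformity of the error estimate for $\rho(\lambda)$ and of the quotient $(\rho^{\pm1}-y_1(T))/y_2(T)$ as $\lambda\to\infty$ along directions where $\sin(\sqrt{\lambda}T)$ is small — i.e. near $\sqrt{\lambda}T \in \pi\mathbb{Z}$, which for $\lambda\in\Omega_s$ happens near the positive real axis but is excluded by the sector condition, and along rays interior to $\Omega_s$ is avoided because $|\mathrm{Im}\sqrt{\lambda}|\to\infty$ forces $|\sin(\sqrt{\lambda}T)| \sim \tfrac12 e^{|\mathrm{Im}\sqrt{\lambda}|T} \to\infty$. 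One must check that the denominator $y_2(T,\lambda)$ stays comparably large (it does not vanish on $\Omega_s$ for large $|\lambda|$ precisely because it is $\sqrt{\lambda}^{-1}\sin(\sqrt{\lambda}T)$ up to lower order, and this is bounded below by $c\,\sqrt{\lambda}^{-1}e^{|\mathrm{Im}\sqrt{\lambda}|T}$), so that the quotient is genuinely $\pm i\sqrt{\lambda}+O(1)$ uniformly on $\Omega_s$. The other bookkeeping — verifying that all the $O(\lambda^{-1/2})$ terms that appear multiply quantities small enough to land in $O(\lambda^{-1})$ in the final answer — is routine but must be done carefully to get the displayed $\tfrac{1}{2i\sqrt\lambda}\int_0^x u$ coefficient exactly right.
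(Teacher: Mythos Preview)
Your approach has a genuine gap for the exponentially decaying Bloch--Floquet solution. The expansions you write for $y_1(x,\lambda)$ and $y_2(x,\lambda)$ cannot have absolute error terms $O(\lambda^{-1})$: since $\cos(\sqrt{\lambda}x)$ and $\sin(\sqrt{\lambda}x)$ both grow like $e^{|\mathrm{Im}\sqrt{\lambda}|\,|x|}$ in $\Omega_s$, the correct Volterra estimates give errors of size $O\bigl(|\lambda|^{-1/2} e^{|\mathrm{Im}\sqrt{\lambda}|\,|x|}\bigr)$, i.e.\ relative errors $O(\lambda^{-1/2})$. When you substitute into $\psi^{\pm} = y_1 + c\,y_2$ with $c = \pm i\sqrt{\lambda} + O(1)$, the leading parts combine to $e^{\pm i\sqrt{\lambda}x}$, but the uncancelled error is of size $O\bigl(|\lambda|^{-1/2} e^{|\mathrm{Im}\sqrt{\lambda}|\,|x|}\bigr)$. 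For the \emph{growing} solution (say $\psi^-$ when $x>0$) this is a relative $O(\lambda^{-1/2})$ and your argument goes through. For the \emph{decaying} solution $\psi^+ \sim e^{i\sqrt{\lambda}x} = e^{-|\mathrm{Im}\sqrt{\lambda}|x}$, however, the relative error becomes $O\bigl(|\lambda|^{-1/2} e^{2|\mathrm{Im}\sqrt{\lambda}|x}\bigr)$, which is exponentially large in $\Omega_s$ and destroys the expansion entirely. The obstacle you anticipated (smallness of $y_2(T,\lambda)$) is not the real issue; the real issue is that the exponentially growing parts of $y_1$ and $c\,y_2$ cancel at leading order but their error terms do not.

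The paper confronts exactly this difficulty and circumvents it by never expanding $y_1$ and $y_2$ separately. Instead it writes $\psi^{\pm}(x,\lambda) = \rho(\lambda)^{\pm x/T} p(x,\lambda)$ with $p$ periodic in $x$, derives an integral equation for $p$ directly, and solves it by a Neumann series whose terms are uniformly bounded (since $p$ is bounded, there is no exponential growth to cancel). The asymptotics of $p$ and of $\rho^{\pm x/T}$ are then multiplied to obtain the stated result. Your argument can be salvaged for the growing solution, but for the decaying one you need either this factorization or an equivalent device (e.g.\ working with $e^{i\sqrt{\lambda}x}\psi^+$ from the start and deriving an integral equation for that quantity).
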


\begin{rmk} \label{rmk:defbranch}
In (\ref{eq:defsector}) we use the branch of the argument taking values in $[0,2\pi)$. We also use this branch of the argument to define $\sqrt{\lambda}$ in the previous lemma, so  $\sqrt{\lambda}$ takes values in the upper half plane for $\lambda \in \mathbb{C} \setminus \mathbb{R}^+$ and positive values for $\lambda \in \mathbb{R}^+ \setminus \{0\}$.
\end{rmk}

Hill's equation has at least one bounded solution if and only if $\Delta(\lambda) \in [-2,2]$.
In fact, the existence of bounded solutions to Hill's equation for a given $\lambda$ is equivalent to saying $\lambda \in \sigma(L)$ by an argument based on the Weyl criterion (see \cite{HiSi96} for example).
The spectrum $\sigma(L_0)$ is composed of the zeros of $y_2(T,\lambda)$.
The results we will need on forward spectral theory are summarized in the following theorem:

\begin{thm} \label{thm:forspec}
Consider a Hill's operator $-\frac{\partial^2}{\partial x^2} + u(x)$ where $u \in L^\infty$ is periodic with period $T$.
The following are necessary conditions on the spectra $\sigma(L)$ and $\sigma(L_0)$:
\begin{enumerate}
\item There are infinitely many periodic eigenvalues $\{\lambda_0\} \cup \{ \lambda_{4n-1},\lambda_{4n}\}_{n = 1}^\infty$ and infinitely many antiperiodic eigenvalues $\{\lambda_{4n-3},\lambda_{4n-2}\}_{n = 1}^\infty$ of $L$ that are all real and can be ordered such that
\begin{equation} \label{eq:perord}\lambda_0 < \cdots < \lambda_{2n-1} \le \lambda_{2n} < \lambda_{2n+1} \le \lambda_{2n+2} < \cdots . \end{equation}

\item The spectrum $\sigma (L)$ for the whole line problem is the union
\begin{equation} \label{eq:specc} \sigma(L) = \bigcup_{n = 0}^\infty [\lambda_{2n},\lambda_{2n+1}]. \end{equation}

\item The spectrum $\sigma (L_0)$ for the Dirichlet problem on $[0,T]$ is an increasing sequence $\sigma(L_0) = \{\mu_n\}_{n = 1}^\infty$ where $\mu_n \in [\lambda_{2n-1},\lambda_{2n}]$.

\item $\lambda_{2n-1}$, $\lambda_{2n}$ and $\mu_n$ have the large $n$ asymptotic behavior
\begin{equation} \label{eq:assspec123} \lambda_{2n-1}, \lambda_{2n}, \mu_n = \frac{n^2 \pi^2}{T^2} + \frac{Q}{T} + O(n^{-1})\end{equation}
where
\begin{equation} \label{eq:defQ} Q = \int_0^T u(x) dx. \end{equation}
\end{enumerate}
\end{thm}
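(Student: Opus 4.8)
The plan is to derive Theorem \ref{thm:forspec} from the classical Floquet/oscillation theory, organized around two entire functions: the discriminant $\Delta(\lambda) = \operatorname{Tr} Y(T,\lambda)$ and the entry $y_2(T,\lambda)$, whose zero set is $\sigma(L_0)$. First I would record that both are entire of order $1/2$ and, by iterating the Volterra integral equation for $Y$ (the same iteration used to prove Lemma \ref{lma:BFass}, but carried out uniformly on all of $\mathbb{C}$ rather than on a sector $\Omega_s$), satisfy
\[ \Delta(\lambda) = 2\cos(\sqrt{\lambda}\,T) + O\!\left(\frac{e^{|\operatorname{Im}\sqrt{\lambda}|\,T}}{|\sqrt{\lambda}|}\right), \qquad y_2(T,\lambda) = \frac{\sin(\sqrt{\lambda}\,T)}{\sqrt{\lambda}} + O\!\left(\frac{e^{|\operatorname{Im}\sqrt{\lambda}|\,T}}{|\lambda|}\right). \]
Comparing with the free case $u \equiv 0$ and applying Rouch\'e's theorem on large circles, together with a Hadamard factorization, shows that $\Delta - 2$, $\Delta + 2$, and $y_2(T,\cdot)$ each have the expected global count of zeros, with all but finitely many clustered near the free values $n^2\pi^2/T^2$; this yields the ``infinitely many'' assertions and fixes the labelling.

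Next I would run the oscillation argument on the real axis to obtain (1) and (2). For $\lambda$ sufficiently negative one has $\Delta(\lambda) > 2$, since $\cos(\sqrt{\lambda}\,T) = \cosh(|\sqrt{\lambda}|\,T) \to +\infty$. On the open set where $-2 < \Delta(\lambda) < 2$ one checks, using the Wronskian normalization $y_1(T)y_2'(T) - y_2(T)y_1'(T) = 1$ together with the classical integral formula for $\tfrac{d}{d\lambda}\Delta$, that $\tfrac{d}{d\lambda}\Delta(\lambda) \neq 0$; hence $\Delta$ is strictly monotone across each spectral band, and adjacent bands are separated by (possibly degenerate) gaps on which $|\Delta| \ge 2$. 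Tracking the sign of $\Delta \mp 2$ as $\lambda$ increases from $-\infty$ then forces the ordering \eqref{eq:perord} together with the pattern $\Delta(\lambda_0) = 2$, then two roots of $\Delta = -2$, then two roots of $\Delta = 2$, and so on, which is the $4n$-indexing in (1). Combined with the characterization $\sigma(L) = \{\lambda : \Delta(\lambda) \in [-2,2]\}$ recalled after Lemma \ref{lma:BFass} (a Weyl-criterion argument), this gives (2).

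For (3) I would use the identity $\Delta(\lambda)^2 - 4 = (y_1(T,\lambda) - y_2'(T,\lambda))^2 + 4\, y_2(T,\lambda)\, y_1'(T,\lambda)$, a consequence of $\det Y(T,\lambda) = 1$. At a Dirichlet eigenvalue $\mu$ (a zero of $y_2(T,\cdot)$) the right-hand side is a perfect square, so $\Delta(\mu)^2 \ge 4$ and $\mu$ lies in the closure of a gap; the interlacing of the zeros of $y_2(T,\cdot)$ with those of $\Delta \mp 2$ from the first step (again obtained by comparing with $\sin(\sqrt{\lambda}\,T)/\sqrt{\lambda}$ via Rouch\'e) then shows there is exactly one such $\mu_n$ in each $[\lambda_{2n-1},\lambda_{2n}]$ and that $\sigma(L_0)$ is the increasing sequence $\{\mu_n\}_{n=1}^\infty$. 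Finally, (4) is obtained by carrying the Neumann-series expansion one order further: one finds $\Delta(\lambda) = 2\cos(\sqrt{\lambda}\,T) + \tfrac{Q}{\sqrt{\lambda}}\sin(\sqrt{\lambda}\,T) + O(\lambda^{-1})$ with $Q = \int_0^T u(x)\,dx$ (the oscillatory first-order contributions cancelling in the combination $y_1 + y_2'$), and similarly for $y_2(T,\lambda)$; substituting $\sqrt{\lambda}\,T = n\pi + \varepsilon$ and solving $\Delta = \pm 2$, respectively $y_2(T,\lambda) = 0$, yields $\lambda = n^2\pi^2/T^2 + Q/T + O(n^{-1})$ in each of the three cases.

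I expect the main obstacle to be the bookkeeping in the oscillation argument of the second paragraph — in particular, rigorously ruling out ``missed'' bands and determining which endpoints are periodic versus antiperiodic so that the indices fall into the stated $4n$-pattern — rather than any single difficult estimate; the analytic ingredients (order-$1/2$ growth, the Neumann-series asymptotics, the Rouch\'e/Hadamard zero-counting) are routine, and the refinement needed for (4) is a standard but slightly lengthy second-order computation. In practice most of this can be imported directly from \cite{MaWi79}, \cite{LeSa75}, and \cite{MyDiss}, which is the route the paper takes; the outline above is how one would reconstruct it from first principles.
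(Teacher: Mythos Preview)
Your proposal is correct and follows the standard classical route (Volterra asymptotics for $\Delta$ and $y_2(T,\cdot)$, Rouch\'e/Hadamard zero-counting, the monotonicity of $\Delta$ on bands, the identity $\Delta^2-4=(y_1-y_2')^2+4y_2y_1'$, and a second-order refinement for the asymptotics). The paper does not give its own proof of this theorem at all: immediately after the statement it simply records that parts 1--2 are in \cite[Chapter~2]{MaWi79}, part 3 is basic Sturm--Liouville theory from \cite{LeSa75}, and part 4 is a corollary to Borg's theorem in \cite[p.~39]{MaWi79}. You explicitly anticipate this in your last paragraph, and your outline is precisely a reconstruction of what those references contain, so there is nothing to compare.
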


Parts 1 and 2 are classical results that can be found in \cite[chapter 2]{MaWi79}, part 3 is basic Sturm--Liouville theory which is reviewed in \cite[chapter 1]{LeSa75} for example, and part 4 is a corollary to a theorem by Borg stated in \cite[page 39]{MaWi79}.

The Dirichlet eigenfunctions $y_2(x,\mu_n)$ must also be Bloch--Floquet solutions.
In fact we have the following claim:
\begin{clm} \label{clm:dirflo} The Dirichlet eigenfunction $y_2(x,\mu_n)$ is a Bloch--Floquet solution with shift eigenvalue ${{y_2}'(T,\mu_n) = y_1(T,\mu_n)^{-1} = \rho(\mu_n)}$ or $\rho(\mu_n)^{-1}$. \end{clm}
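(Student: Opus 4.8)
The plan is to exploit the fact that the Dirichlet condition makes the monodromy matrix triangular at $\lambda=\mu_n$: once that is observed, the eigenvector attached to $y_2(x,\mu_n)$ and the corresponding shift eigenvalue can be read off immediately, and the eigenvalue can then be identified with a root of $p_c$ using only the normalization already noted below (\ref{eq:charpoly}).

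First I would recall from the remarks preceding the claim that $\mu_n\in\sigma(L_0)$ is equivalent to $y_2(T,\mu_n)=0$, and that the Dirichlet eigenfunction on $(0,T)$, extended to $\mathbb{R}$, is exactly $y_2(x,\mu_n)$; in the notation of the eigenvalue problem (\ref{eq:eigvalpro}) this solution is the one generated by the vector $v=(0,1)^T$. Since $y_2(T,\mu_n)=0$, a one-line computation gives
\begin{equation*} Y(T,\mu_n)\begin{pmatrix}0\\1\end{pmatrix}=\begin{pmatrix}y_2(T,\mu_n)\\ y_2'(T,\mu_n)\end{pmatrix}=y_2'(T,\mu_n)\begin{pmatrix}0\\1\end{pmatrix}, \end{equation*}
so $v=(0,1)^T$ is an eigenvector of $Y(T,\mu_n)$ with eigenvalue $y_2'(T,\mu_n)$. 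By the Bloch/Floquet theorem stated in this section, $y_2(x,\mu_n)$ is therefore a Bloch--Floquet solution with shift eigenvalue $y_2'(T,\mu_n)$.

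Next I would pin down this eigenvalue. Because $y_2(T,\mu_n)=0$, the matrix $Y(T,\mu_n)$ is lower triangular, so its two eigenvalues are the diagonal entries $y_1(T,\mu_n)$ and $y_2'(T,\mu_n)$; these are precisely the roots of the characteristic polynomial $p_c$ in (\ref{eq:charpoly}). Since $p_c$ has constant coefficient one, the product of its roots is $1$, so $y_1(T,\mu_n)\,y_2'(T,\mu_n)=1$, i.e.\ ${y_2}'(T,\mu_n)=y_1(T,\mu_n)^{-1}$. Moreover the two roots of $p_c$ are $\rho(\mu_n)$ and $\rho(\mu_n)^{-1}$ (interpreted via the boundary values $\rho_\pm$ when $\mu_n$ happens to lie at a band end, where the two roots coincide), so each of $y_1(T,\mu_n)$ and $y_2'(T,\mu_n)$ is $\rho(\mu_n)$ or $\rho(\mu_n)^{-1}$. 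This gives the full chain of equalities in the claim.

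I do not expect any genuine obstacle: every step is elementary. The only point deserving a word of care is the degenerate case in which $\mu_n$ coincides with a band end $\lambda_{2n-1}=\lambda_{2n}$, where $Y(T,\mu_n)$ may fail to be diagonalizable and $\rho(\mu_n)=\rho(\mu_n)^{-1}=\pm1$; but the triangular-matrix argument still yields $y_2'(T,\mu_n)=\pm1$, so the statement holds verbatim in that case as well.
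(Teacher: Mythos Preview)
Your proof is correct and is essentially the same argument as the paper's, just packaged in matrix language. The paper shows $y_2(x+T,\mu_n)=y_2'(T,\mu_n)\,y_2(x,\mu_n)$ by expanding $y_2(x+T,\mu_n)$ in the basis $y_1,y_2$ and evaluating at $x=0$; you obtain the same conclusion by computing $Y(T,\mu_n)(0,1)^T$ directly. For the inverse relation the paper invokes $\det Y(T,\mu_n)=1$, while you invoke the constant coefficient of $p_c$---the same fact.
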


\begin{proof}[Proof of Claim \ref{clm:dirflo}]
The Dirichlet eigenfunction $y_2(x,\mu_n)$ solves Hill's equation with spectral parameter $\mu_n$, so $y_2(x+T,\mu_n)$ also solves Hill's equation with spectral parameter $\mu_n$.
The space of solutions of Hill's equation with spectral parameter $\mu_n$ is spanned by $y_1(x,\mu_n)$ and $y_2(x,\mu_n)$, so $y_2(x+T, \mu_n) = \alpha y_1(x, \mu_n) + \beta y_2(x, \mu_n)$ for some constants $\alpha, \beta$ .
Evaluation of $y_2(x+T, \mu_n)$ at $x = 0$ gives $y_2(T, \mu_n) = \alpha = 0$.
Differentiating both sides of $y_2(x+T, \mu_n) = \beta y_2 (x,\mu_n)$ by $x$ and evaluating the result at $x = 0$ gives $\beta = {y_2}'(T,\mu_n)$.
Therefore, $y_2(x,\mu_n)$ is a Bloch--Floquet solution with shift eigenvalue ${y_2}'(T,\mu_n)$, so ${y_2}' (T,\mu_n) = \rho(\mu_n)$ or ${y_2}' (T,\mu_n) = \rho(\mu_n)^{-1}$ by the Floquet theorem.
Then $y_1(T,\mu_n) = \rho(\mu_n)^{-1}$ or $y_1(T,\mu_n) = \rho(\mu_n)$ respectively because $\det(Y(T,\mu_n)) = y_1(T,\mu_n) {y_2}'(T,\mu_n) = 1$. \end{proof}

Recall that the order $r$ of an entire function $f$ is defined \cite[page 248]{Ti39} as
\begin{equation} \label{eq:deforder} r := \inf \{ r_0 > 0 : f(\lambda) = O(e^{|\lambda|^{r_0}}) \text{ as } \lambda \to \infty \}. \end{equation}
In \cite{MaWi79} they prove that $y_i(x,\lambda)$ and $y_i'(x,\lambda)$ are entire functions with orders at most $\tfrac{1}{2}$ for all $x$, and in fact $y_2(T,\lambda)$ and $\Delta(\lambda)$ are entire functions with orders $\tfrac{1}{2}$.
%%%The following corollary to this fact is important in the proof of theorem \ref{thm:forspec}.
%%%\begin{cor} \label{cor:allval} $\Delta(\lambda)$ takes on every complex value infinitely many times, in particular $L$ has infinitely many periodic and antiperiodic eigenvalues.  \end{cor}
%%%
%%%\begin{proof}
%%%For any complex number $z$ the Hadamard factorization theorem implies that $\Delta(\lambda) - z = e^{Q(\lambda)} P(\lambda)$ where $P(\lambda)$ is the canonical product with zeros at the values for which $\Delta(\lambda) = z$ and $Q(\lambda)$ is a polynomial of degree at most $\frac{1}{2}$ \cite[page  250]{Ti39}.
%%%Therefore, $Q(\lambda)$ is a constant, and so $\Delta(\lambda) - z = C P(\lambda)$; in particular $P(\lambda)$ has order $\frac{1}{2}$.
%%%If $\Delta(\lambda) - z$ had a finite number of zeros then the canonical product $P(\lambda)$ would be a polynomial.
%%%However, polynomials have \,�order $0$ and therefore $P(\lambda)$ is not a polynomial.
%%%Setting $z = \pm 2$ implies that $L$ has infinitely many periodic and antiperiodic eigenvalues.
%%%
%%%\end{proof}
By standard results from complex analysis, one deduces from this that $L$ has infinitely many periodic and antiperiodic eigenvalues.

In \cite{MaWi79} they also prove that: all zeros of $y_2(T,\lambda)$ are simple; $\Delta(\lambda) - 2$ has simple zeros at the periodic eigenvalues unless $\lambda_{4n-1} = \lambda_{4n}$, in which case $\Delta(\lambda) - 2$ has a multiplicity 2 zero at $\lambda_{4n}$; $\Delta(\lambda) + 2$ has simple zeros at the antiperiodic eigenvalues unless $\lambda_{4n-3} = \lambda_{4n-2}$, in which case $\Delta(\lambda) + 2$ has a multiplicity 2 zero at $\lambda_{4n-2}$.
These facts, along with known asymptotic behaviors as $\lambda \to \infty$, allows us to conclude the following expansions by the Hadamard factorization theorem \cite{MckTr78}:
\begin{equation} \label{eq:hady2} y_2(T,\lambda) = \prod_{n \ge 1} \frac{T^2}{n^2 \pi^2} (\mu_n - \lambda), \end{equation}
\begin{equation} \label{eq:hadfloper} \Delta(\lambda) - 2 = 2 (\lambda_0 - \lambda) \prod_{n \text{ even}} \frac{T^4}{n^4 \pi^4} (\lambda_{2n} - \lambda) (\lambda_{2n-1} - \lambda), \end{equation}
\begin{equation} \label{eq:hadfloaper} \Delta(\lambda) + 2 = 2 \prod_{n \text{ odd}} \frac{T^4}{n^4 \pi^4} (\lambda_{2n} - \lambda) (\lambda_{2n-1} - \lambda).  \end{equation}
\begin{rmk}
We observe that the product expansion
\begin{equation} \label{eq:expansionsqrtflo}
\sqrt{ \Delta(\lambda)^{2} - 4} = - 2 i \sqrt{\lambda - \lambda_{0}} \prod_{n=1}^{\infty} \left( \frac{ - T^{2}}{n^{2} \pi^{2}} \right) \sqrt{\lambda - \lambda_{2n-1}}\sqrt{\lambda - \lambda_{2n}}
\end{equation}
yields an explicit representation of the function $\sqrt{\Delta(\lambda)^{2} - 4}$ that is consistent with the definition from formula (\ref{eq:rhodelta}).
The square roots of the elementary factors are defined using the branch of the square root discussed in remark \ref{rmk:defbranch}.
\end{rmk}

We conclude this section by explicitly defining the spectral data that uniquely define the potential $u$.
\begin{defn}
\label{def:SpecData}
Let $\{E_n\}_{n = 0}^{2 \mathcal{G}}$ be the subsequence of $\{\lambda_n\}_{n = 0}^\infty$ such that the spectrum $\sigma(L)$ is the disjoint union
\begin{equation} \sigma(L) = \bigcup_{n = 0}^{\mathcal{G}-1} [E_{2n}, E_{2n+1}] \cup [E_{2 \mathcal{G}}, \infty) \end{equation}
where the number of gaps $\mathcal{G}$ is either finite or countably infinite, and in the latter case the interval $[E_{2 \mathcal{G}}, \infty)$ is not included.
Let $\{\mu_{n_k} \}_{k = 1}^{\mathcal{G}}$ be the subsequence of $\mu_{n}$ such that $\mu_{n_k} \in [E_{2k-1}, E_{2k}]$, and let $\sigma_n = -\textrm{sgn}(\log(|{y_2}'(T,\mu_n)|))$.
We define the spectral data associated to the potential $u$ as
\begin{equation} \Sigma(u) :=  \label{eq:specdata} \{E_0 \} \cup \{ E_{2k-1}, E_{2k}, \mu_{n_k}, \sigma_{n_k}\}_{k = 1}^{\mathcal{G}}.\end{equation}
\end{defn}
The reason for considering the new parameters $E_k$ rather than $\lambda_n$ in the spectral data is that when a spectral gap closes the doubly degenerate periodic/antiperiodic eigenvalue corresponding to the degenerate gap - which is also a Dirichlet eigenvalue - is set purely by the band ends $E_k$.
From the point of view of our Riemann--Hilbert problem, this is essentially due to the cancellation that occurs in (\ref{eq:defBintro}) in the degenerate cases.
In other words, we need to consider $E_{2k-1},E_{2k}$ and $\mu_{n_k}$, $\sigma_{n_k}$ instead of the full sequence $\lambda_{2n-1},\lambda_{2n}$ and $\mu_n$, $\sigma_n$ to end up with an inverse problem that is not overdetermined in the degenerate cases.

In the above definition we use the convention that the sign function $\text{sgn}$ takes values 1 on the positive numbers, $-1$ on the negative numbers and 0 otherwise.
The spectral interpretation of the signature $\sigma_{n_k}$ when $\sigma_{n_k} = \pm 1$ is that $y_2(x,\mu_{n_k})$ is then a Bloch--Floquet solution with shift eigenvalue $\rho(\mu_{n_k})^{\sigma_{n_k}}$.
The signature $\sigma_n$ vanishes if and only if $\mu_n = \lambda_{2n-1}$ or $\mu_n = \lambda_{2n}$ since in those cases $\rho(\mu_n) = \rho(\mu_n)^{-1} = \pm 1$.
Although knowledge that $\sigma_{n_k} = 0$ is redundant information, we include the signature in these cases for notational convenience.
If $\sigma_{n_k} = 1$ or $\sigma_{n_k} = -1$, then $\psi^+(x,\mu_{n_k})$ or $\psi^-(x,\mu_{n_k})$ is undefined respectively because it is then impossible to make the normalization $\psi^+ (0,\mu_{n_k}) = 1$ or $\psi^- (0,\mu_{n_k}) = 1$ respectively.
Similarly, if $\sigma_{n_k} = 0$ then both $\psi^\pm(x,\mu_n)$ are undefined because on the band ends there is only one linearly independent Bloch--Floquet solution \cite[page 4]{MaWi79}.

A primary objective of the next few sections is to prove that the Bloch--Floquet solutions of Hill's equation yield a solution to a matrix Riemann-Hilbert problem.  Toward that end, we will first establish some asymptotic estimates concerning solutions of Hill's equation, valid for $\lambda \to \infty$, away from the positive real axis.

For simplicity of presentation we will assume that $E_0 = 0$ for the remainder of the paper.  If $E_0 \ne 0$ one can always apply the arguments as presented in this paper to the shifted potential $\tilde u(x) = u(x) - E_0$.

\section{Asymptotic Expansions of Bloch--Floquet Solutions}

In this section we prove lemma \ref{lma:BFass}, and some other useful asymptotic results.
Let us introduce the solutions $y^\pm$ to Hill's equation with spectral parameter $\lambda$ normalized by the initial condition $y^\pm (0,\lambda) = 1$ and $(y^\pm) '(0,\lambda) = \pm i \sqrt{\lambda}$.
Then
\begin{equation}y^\pm (x,\lambda) = y_1(x,\lambda) \pm i \sqrt{\lambda} y_2(x,\lambda)\end{equation}
so $y^\pm (x,\lambda)$ exist, are unique, are holomorphic for $\lambda \in \mathbb{C} \setminus \mathbb{R}^+$.
Let $m(x, \lambda) = e^{i \sqrt{\lambda} x} y^-(x,\lambda)$ and $n(x, \lambda) = e^{i \sqrt{\lambda} x} y^+(x,\lambda)$.
Recall that we are using the non-principal branch of the square root discussed in remark \ref{rmk:defbranch} that maps $\mathbb{C} \setminus \mathbb{R}^+$ onto the upper half plane so that $e^{i \sqrt{\lambda} x}$ exponentially decays and $e^{-i\sqrt{\lambda} x}$ exponentially grows as $\lambda \to \infty$ for $x > 0$. { This is the branch of the square root determined by the branch of the argument mapping $\mathbb{C}$ onto $[0,2\pi)$}.

\begin{prop} \label{prop:assmn}
Suppose that $u \in L^\infty(\mathbb{R})$, then $m(T,\lambda)$ and $n(T,\lambda)$ are holomorphic functions of $\lambda$ for $\mathbb{C} \setminus \mathbb{R}^+$ such that for each $s > 0$ as $\lambda \to \infty$, $\lambda \in \Omega_{s}$, we have
\begin{equation} m(T,\lambda) = 1 - \frac{1}{2 i \sqrt{\lambda}} Q + O(\lambda^{-1}), 
\end{equation}
$n(T,\lambda) = O(\lambda^{-1})$, and $m'(T,\lambda),n'(T,\lambda) = O(\sqrt{\lambda}^{-1})$.
If we additionally assume that $u$ has a bounded second derivative, then
\begin{equation} n(T,\lambda) = -\frac{1}{4 \lambda} u(0) + \frac{1}{8 i \sqrt{\lambda}^3} \left( u'(0) + u(0) Q \right) + O(\lambda^{-2}),  \end{equation}
\begin{equation} m'(T,\lambda) = - \frac{1}{2 i \sqrt{\lambda}} u(0) + \frac{1}{4 \lambda} \left(u'(0) - u(0) Q\right) + O(\sqrt{\lambda}^{-3}), \end{equation}
\begin{equation} n'(T,\lambda) = O(\sqrt{\lambda}^{-3})\end{equation}
as $\lambda \to \infty$, $\lambda \in \Omega_s$.
\end{prop}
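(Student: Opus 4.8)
The plan is to recast $m(T,\lambda)$, $n(T,\lambda)$, $m'(T,\lambda)$, $n'(T,\lambda)$ as boundary values of solutions of scalar Volterra integral equations whose kernel is uniformly small on $\Omega_s$, and then bootstrap. Throughout write $k=\sqrt{\lambda}$ for the branch of Remark \ref{rmk:defbranch}; for $\lambda\in\Omega_s$ one has $\arg k\in(s/2,\pi-s/2)$, hence $\operatorname{Im}k\ge |k|\sin(s/2)>0$, so $|e^{2ik(x-t)}|\le 1$ for $0\le t\le x$, while $|e^{2ikT}|\le e^{-2T|k|\sin(s/2)}$ is smaller than every power of $|k|$ as $|\lambda|\to\infty$. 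Holomorphy of $m(T,\lambda)$ and $n(T,\lambda)$ on $\mathbb{C}\setminus\mathbb{R}^+$ is immediate, since $y^\pm(x,\lambda)$ and $e^{ikx}$ are holomorphic there. Applying variation of parameters to $-y''+uy=\lambda y$ written as $y''+k^2y=uy$ gives $y^\pm(x,\lambda)=e^{\pm ikx}+k^{-1}\int_0^x\sin(k(x-t))u(t)y^\pm(t,\lambda)\,dt$; writing $\sin(k(x-t))=(2i)^{-1}(e^{ik(x-t)}-e^{-ik(x-t)})$, multiplying by $e^{ikx}$, and re-expressing the integrand via $y^\pm(t,\lambda)=e^{\mp ikt}(\cdots)$ yields
\[
m(x,\lambda)=1+\frac{1}{2ik}\int_0^x\!\big(e^{2ik(x-t)}-1\big)u(t)\,m(t,\lambda)\,dt,\qquad
n(x,\lambda)=e^{2ikx}+\frac{1}{2ik}\int_0^x\!\big(e^{2ik(x-t)}-1\big)u(t)\,n(t,\lambda)\,dt,
\]
with the \emph{same} kernel $\kappa(x,t)=(2ik)^{-1}(e^{2ik(x-t)}-1)u(t)$, and, on differentiating in $x$,
\[
m'(x,\lambda)=ik\big(m(x,\lambda)-1\big)+\tfrac12\!\int_0^x\!\big(e^{2ik(x-t)}+1\big)u(t)m(t,\lambda)\,dt,\qquad
n'(x,\lambda)=ik\big(n(x,\lambda)+e^{2ikx}\big)+\tfrac12\!\int_0^x\!\big(e^{2ik(x-t)}+1\big)u(t)n(t,\lambda)\,dt.
\]
On $\Omega_s$, $|\kappa(x,t)|\le \|u\|_\infty/|k|$ uniformly in $0\le t\le x\le T$, so the Neumann series of each Volterra equation converges and, for $|\lambda|$ large, gives $m(x,\lambda)=1+O(|k|^{-1})$, $n(x,\lambda)=e^{2ikx}+O(|k|^{-1})$ uniformly in $x\in[0,T]$ and $\lambda\in\Omega_s$.

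For the first batch of estimates I would substitute these leading approximations back into the equations, using $\int_0^T u=Q$ and the oscillatory bound $|\int_0^x e^{2ik(x-t)}u(t)\,dt|\le \|u\|_\infty/(2\operatorname{Im}k)=O(|k|^{-1})$ valid on $\Omega_s$. This gives $m(T,\lambda)=1-Q/(2ik)+O(|k|^{-2})$ and $n(T,\lambda)=O(|k|^{-2})$ (the leading $e^{2ikT}$ being beyond all algebraic orders), and then, from the formulas for $m'$ and $n'$, the bounds $m'(T,\lambda),n'(T,\lambda)=O(|k|^{-1})$. Note that in $m'(T,\lambda)$ the two $O(1)$ contributions — namely $-Q/2$ from $ik(m(T,\lambda)-1)$ and $+Q/2$ from $\tfrac12\int_0^T(e^{2ik(T-t)}+1)u(t)\,dt$ — cancel; this is precisely why $m'$ is written in the displayed form, and it is the mechanism behind the drop in order. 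Rewriting $k=\sqrt{\lambda}$ produces the unconditional part of the proposition.

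For the refined estimates, assume $u\in C^2$; since $u$ is $T$-periodic, $u,u',u''$ are then periodic and bounded. The only new tool is integration by parts against $e^{2ikt}$: each such step gains a factor $(2ik)^{-1}$, the resulting boundary term at $t=T$ is exponentially small on $\Omega_s$ precisely because $u^{(j)}(T)=u^{(j)}(0)$, and the boundary term at $t=0$ produces the values $u(0)$ and $u'(0)$. Concretely, feeding $n(t,\lambda)=e^{2ikt}+O(|k|^{-1})$ into the $n$-equation gives $n(T,\lambda)=-(2ik)^{-1}\int_0^T e^{2ikt}u(t)\,dt+(\text{exp.\ small})+O(|k|^{-4})$; two integrations by parts give $\int_0^T e^{2ikt}u(t)\,dt=-u(0)/(2ik)+u'(0)/(2ik)^2+O(|k|^{-3})$; and a second pass through the Volterra equation, now feeding back the correction $n(t,\lambda)=e^{2ikt}+u(0)/(2ik)^2+\cdots$, contributes $-u(0)Q/(2ik)^3+O(|k|^{-4})$. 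Using $(2ik)^2=-4\lambda$ and $(2ik)^3=-8i\sqrt{\lambda}^{\,3}$ assembles the stated expansion of $n(T,\lambda)$. The expansions of $m'(T,\lambda)$ and $n'(T,\lambda)$ follow the same scheme from the displayed formulas for $m'$ and $n'$, now carrying $m(T,\lambda)$ and $n(T,\lambda)$ one order further so that the prefactor $ik$ is absorbed; in $m'(T,\lambda)$ the claimed $-u(0)/(2i\sqrt{\lambda})$ and the coefficient $(u'(0)-u(0)Q)/(4\lambda)$ appear only after the $O(1)$ terms cancel and the several $O(|k|^{-1})$ contributions are collected, and in $n'(T,\lambda)$ the $O(|k|^{-1})$ and $O(|k|^{-2})$ contributions cancel as well, leaving $O(\sqrt{\lambda}^{-3})$.

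The main obstacle I anticipate is not any individual estimate but the bookkeeping in this last step. One must consistently distinguish \emph{non-oscillatory} remainders, which are only $O(|k|^{-j})$ in sup norm, from \emph{oscillatory} ones of the form $|k|^{-1}e^{2ikt}\times(\text{bounded})$, which are $O(|k|^{-1})$ pointwise but become $O(|k|^{-2})$ or better once integrated against $e^{2ik(T-t)}u(t)$ through one more integration by parts; a crude triangle-inequality bound would lose exactly the orders one is trying to capture. Verifying the cancellations in $m'(T,\lambda)$ and $n'(T,\lambda)$ to the stated precision is the other delicate point. Everything else is routine Volterra/iteration estimation.
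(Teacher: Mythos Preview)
Your approach is essentially the paper's: the same Volterra equations for $m$ and $n$, Neumann iteration for the coarse bounds, and integration by parts for the refined ones. Two small points are worth noting. First, the paper differentiates the integral equations directly to obtain the cleaner formulas
\[
m'(T,\lambda)=\int_0^T e^{2i\sqrt{\lambda}(T-t)}u(t)m(t,\lambda)\,dt,\qquad
n'(T,\lambda)=2i\sqrt{\lambda}\,e^{2i\sqrt{\lambda}T}+\int_0^T e^{2i\sqrt{\lambda}(T-t)}u(t)n(t,\lambda)\,dt,
\]
which already sit at order $O(|k|^{-1})$ without any cancellation; your algebraically equivalent rewrites $m'=ik(m-1)+\tfrac12\int(e^{2ik(x-t)}+1)um$ etc.\ build in $O(1)$ pieces that must then be cancelled by hand, which is exactly the bookkeeping you flag as the main obstacle. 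Using the paper's form removes that obstacle entirely. Second, your statement that the boundary term at $t=T$ is exponentially small ``precisely because $u^{(j)}(T)=u^{(j)}(0)$'' is a misattribution: after one integration by parts the term is $e^{2ikT}u(T)/(2ik)$, and this is exponentially small on $\Omega_s$ simply because $\operatorname{Im}k\ge |k|\sin(s/2)>0$, regardless of periodicity. Neither point is a genuine gap.
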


\begin{proof}
The functions $m$ and $n$ solve the integral equations
\begin{equation}m(x, \lambda) = 1 + G(m)(x, \lambda), n(x,\lambda) = e^{2i \sqrt{\lambda} x} + G(n)(x,\lambda),\end{equation}
where
\begin{equation}G(f) (x)  = \int_0^x \frac{e^{2i\sqrt{\lambda}(x - t)}-1}{2i \sqrt{\lambda}} u(t) f(t) dt.\end{equation}
The bound $|e^{2i\sqrt{\lambda} (T-t)}-1| \le 2$ for $t \in [0,T]$ implies
\begin{align*} \numberthis \label{eq:boundGn}  |G^n(g)(x)| & \le \frac{2^n}{|\sqrt{\lambda}|^n} \int_0^T |u(t_1)| \cdots \int_0^{t_{n-1}} |u(t_n)| |g(t_n)| dt_n \cdots dt_1 \\ 
& \le \frac{\|u\|_{L^\infty (\mathbb{R})}^n 2^n T^n}{|\sqrt{\lambda}|^n n!} \|g\|_{L^\infty ([0,T])}. \end{align*}
The bound (\ref{eq:boundGn}) implies that the Neumann series
\begin{equation}m(x,\lambda) = \sum_{j = 0}^\infty G^j(1)(x,\lambda), \end{equation}
\begin{equation} n(x,\lambda) = \sum_{j = 0}^\infty G^j (e^{2 i \sqrt{\lambda} t})(x, \lambda)\end{equation}
converges uniformly to holomorphic functions that uniquely solve the integral equation for $m,n$.
The bounds
\begin{equation}\sum_{n = 2}^\infty |G^n(1)(x,\lambda)| \le \frac{C'}{|\lambda|} \exp\left( 2 \|u\|_{L^\infty(\mathbb{R})} T |\sqrt{\lambda}|^{-1}\right)\end{equation}
following from (\ref{eq:boundGn}) and
\begin{align*} \numberthis \label{eq:gasscorection} \left| G(1)(x,z) + \frac{1}{2 i \sqrt{\lambda}} \int_0^x u(t) dt \right|& = \left| \int_0^x \frac{e^{2i\sqrt{\lambda}(x - t)}}{2i \sqrt{\lambda}} u(t)dt \right| \\
& \le \frac{e^{-2 |\text{Im}(\sqrt{\lambda} )| x}}{2 |\sqrt{\lambda}|} \|u\|_{L^\infty (\mathbb{R})} \int_0^x e^{2 |\text{Im}(\sqrt{\lambda})| t} dt \\
 & \le \frac{\|u\|_{L^\infty (\mathbb{R})}}{\sin(s) |\lambda|}\end{align*}
imply that
\begin{equation}m (x,\lambda) = 1 - \frac{1}{2 i \sqrt{\lambda}} \int_0^x u(t) dt + O(\lambda^{-1}).\end{equation}
The $\sin(s)$ appears in (\ref{eq:gasscorection}) because $|\text{Im}(\sqrt{\lambda})| \ge \sin\left(\frac{s}{2} \right)|\sqrt{\lambda}| \ge \frac{\sin(s)}{2} |\sqrt{\lambda}|$ for $\lambda \in \Omega_s$. This is the first place we have used the condition $\lambda \in \Omega_s$ in this proof. \\

Since $e^{2i\sqrt{\lambda}x} \le e^{-\sin(s) |\text{Im}(\sqrt{\lambda})|x} < 1$, the function $n(x,\lambda)$ is bounded for large $\lambda$.
To show that ${n(x,\lambda) = O(\lambda^{-1})}$ for $\lambda \in \Omega_s$ we note that from the exponential decay in $e^{2 i \sqrt{\lambda} x}$ and (\ref{eq:boundGn}) implies that the only term in the Neumann series that could prevent $n(x,\lambda) = O(\lambda^{-1})$ is $G(e^{2 i \sqrt{\lambda} t})(x,\lambda)$.
\begin{equation} \label{eq:assbound123} G(e^{2 i \sqrt{\lambda} t}) (x,\lambda)  = \frac{e^{2i\sqrt{\lambda}x}}{2i \sqrt{\lambda}} \int_0^x  u(t) dt - \frac{1}{2i \sqrt{\lambda}} \int_0^x e^{2i\sqrt{\lambda}t}  u(t) dt.\end{equation}
The first term in (\ref{eq:assbound123}) decays exponentially in $\Omega_s$ while the argument behind (\ref{eq:gasscorection}) implies that the second term in (\ref{eq:assbound123}) is bounded as $O(\lambda^{-1})$. \\

We also need control on $m'(T,\lambda)$ and $n'(T,\lambda)$.
By differentiating both sides of the integral equations for $m$ and $n$ we can bound $m'(T, \lambda)$ and $n'(T, \lambda)$ in terms of $m(T, \lambda)$ and $n(T, \lambda)$ as
\begin{equation}\label{eq:intdm} m'(T,\lambda) = \int_0^T e^{2i \sqrt{\lambda} (T-t)} u(t) m(t) dt, \end{equation}
and
\begin{equation}\label{eq:intdn} n'(T,\lambda) = 2i\sqrt{\lambda} e^{2i\sqrt{\lambda}T} + \int_0^T e^{2i\sqrt{\lambda}(T-t)} u(t) n(t) dt , \end{equation}
so
\begin{equation}|m'(T,\lambda)| \le \frac{2\|u\|_{L^\infty (\mathbb{R})} \| m(\lambda) \|_{L^\infty ([0,T])}}{\sin(s) |\sqrt{\lambda}|},\end{equation}
and
\begin{equation}|n'(T,\lambda)| \le 2 |\sqrt{\lambda} | e^{-\sin(s) |\text{Im}(\sqrt{\lambda})|T} + \frac{2 \|u\|_{L^\infty (\mathbb{R})} \| n(\lambda) \|_{L^\infty ([0,T])}}{\sin(s) |\sqrt{\lambda}|}.\end{equation}
Therefore, $m'(T,\lambda)$ and $n'(T,\lambda)$ have the large $\lambda$ asymptotic behaviors $m'(T, \lambda) = O(\sqrt{\lambda}^{-1})$ and $n'(T,\lambda) = O(\sqrt{\lambda}^{-1})$.

The proof of the remaining estimates comes from applying integration by parts.
For the expansion for $n$ we consider terms in the Neumann series.
For the expansion of $m'$ and the bound on $n'$ we plug the expansions for $n$ and $m$ into (\ref{eq:intdm}) and (\ref{eq:intdn}).
\end{proof}

\begin{prop}\label{prop:assdelta}
For all $s > 0$ the asymptotic expansion
\begin{equation}\label{eq:assdelta} \Delta(\lambda) = e^{-i \sqrt{\lambda} T} \left(1 - \frac{1}{2i\sqrt{\lambda}} Q + O(\lambda^{-1}) \right)\end{equation}
is valid as $\lambda \to \infty$ for $\lambda \in \Omega_s$.
\end{prop}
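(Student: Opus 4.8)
The plan is to reduce the statement to Proposition~\ref{prop:assmn} by expressing the Floquet discriminant $\Delta(\lambda) = y_1(T,\lambda) + y_2'(T,\lambda)$ through the functions $m$ and $n$, whose asymptotics were just established. First I would invert the relations $y^{\pm}(x,\lambda) = y_1(x,\lambda) \pm i\sqrt{\lambda}\,y_2(x,\lambda)$ and their $x$-derivatives to get $y_1 = \tfrac12(y^+ + y^-)$ and $y_2' = \tfrac{1}{2i\sqrt{\lambda}}\bigl((y^+)' - (y^-)'\bigr)$, then substitute $y^-(x,\lambda) = e^{-i\sqrt{\lambda}x}m(x,\lambda)$, $y^+(x,\lambda) = e^{-i\sqrt{\lambda}x}n(x,\lambda)$ (so that $(y^-)'(x,\lambda) = e^{-i\sqrt{\lambda}x}\bigl(m'(x,\lambda) - i\sqrt{\lambda}\,m(x,\lambda)\bigr)$, and similarly for $y^+$). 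Evaluating at $x = T$, the common factor $e^{-i\sqrt{\lambda}T}$ factors out and a cancellation occurs: the contribution of $n(T,\lambda)$ coming from $y_1(T,\lambda)$ is exactly annihilated by the one coming from $y_2'(T,\lambda)$, leaving the exact identity
\begin{equation*}
\Delta(\lambda) = e^{-i\sqrt{\lambda}T}\left( m(T,\lambda) + \frac{1}{2i\sqrt{\lambda}}\bigl(n'(T,\lambda) - m'(T,\lambda)\bigr)\right),
\end{equation*}
valid for $\lambda \in \mathbb{C}\setminus\mathbb{R}^+$.

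With this identity in hand the conclusion is immediate. For $u \in L^{\infty}(\mathbb{R})$ and $\lambda \in \Omega_s$, Proposition~\ref{prop:assmn} gives $m(T,\lambda) = 1 - \tfrac{1}{2i\sqrt{\lambda}}Q + O(\lambda^{-1})$ and $m'(T,\lambda), n'(T,\lambda) = O(\sqrt{\lambda}^{-1})$, whence $\tfrac{1}{2i\sqrt{\lambda}}\bigl(n'(T,\lambda) - m'(T,\lambda)\bigr) = O(\lambda^{-1})$, and substituting yields
\begin{equation*}
\Delta(\lambda) = e^{-i\sqrt{\lambda}T}\left(1 - \frac{1}{2i\sqrt{\lambda}}Q + O(\lambda^{-1})\right),
\end{equation*}
as claimed. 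It is worth noting that only the weakest set of estimates in Proposition~\ref{prop:assmn} is used here — the refinements requiring a bounded second derivative of $u$, and even the estimate $n(T,\lambda) = O(\lambda^{-1})$, play no role, since $n(T,\lambda)$ drops out of the exact identity above.

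There is essentially no hard step: the only thing to be careful about is the bookkeeping in the first paragraph, namely correctly tracking signs and the $\sqrt{\lambda}$ prefactors when differentiating $y_2$ and reassembling $\Delta$, and restricting to the sector $\Omega_s$ so that Proposition~\ref{prop:assmn} applies. An alternative would be to set up a Volterra integral equation directly for a combination representing $\Delta$ and iterate it as in the proof of Proposition~\ref{prop:assmn}, but piggybacking on that proposition avoids repeating the Neumann-series estimates and is cleaner.
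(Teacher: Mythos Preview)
Your proof is correct and follows essentially the same approach as the paper: both derive the exact identity $\Delta(\lambda) = e^{-i\sqrt{\lambda}T}\bigl(m(T,\lambda) + \tfrac{1}{2i\sqrt{\lambda}}(n'(T,\lambda) - m'(T,\lambda))\bigr)$ by unwinding the definitions of $m,n$ in terms of $y_1,y_2$, and then read off the asymptotics directly from Proposition~\ref{prop:assmn}. Your write-up is in fact more explicit about the algebraic cancellation of $n(T,\lambda)$, which the paper states without derivation.
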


\begin{proof}[Proof of Proposition \ref{prop:assdelta}]
The Floquet discriminant $\Delta(\lambda) = y_1(T,\lambda) + y_2'(T,\lambda)$ is expressed in terms of $m,n$ as
\begin{equation}
\Delta(\lambda) = e^{-i \sqrt{\lambda} T} \left( m(T,\lambda) + \frac{n'(T,\lambda) - m'(T,\lambda)}{2 i \sqrt{\lambda}} \right).
\end{equation}
Therefore, $\Delta(\lambda)$ has the large $\lambda$ asymptotic description
\begin{equation}\Delta(\lambda) = e^{-i \sqrt{\lambda} T} \left(1 - \frac{1}{2 i \sqrt{\lambda}} Q + O(\lambda^{-1}) \right)\end{equation}
for $\lambda \in \Omega_s$, where the $O(\sqrt{\lambda}^{-1})$ correction term comes from (\ref{eq:gasscorection}).
In particular, notice that this asymptotic description implies that $\Delta(\lambda)$ has order at least $\frac{1}{2}$ \cite[page  248]{Ti39}.
However, $\Delta(\lambda)$ also has order at most $\frac{1}{2}$, so $\Delta(\lambda)$ has order exactly $\frac{1}{2}$.
\end{proof}

\begin{prop}
If $u(x) \in L^\infty (\mathbb{R})$, then for all $s > 0$
\begin{equation} \label{eq:propx01} \psi_x^\pm(0,\lambda) = \frac{\rho(\lambda)^{\pm 1} - y_1(T,\lambda)}{y_2(T,\lambda)} = \pm i \sqrt{\lambda} + O(\sqrt{\lambda}^{-1}) \end{equation}
as $\lambda \to \infty$ for $\lambda \in \Omega_s$.
If we additionally assume the $u$ has a bounded second derivative then for all $s > 0$
\begin{equation} \label{eq:propx02} \psi_x^\pm(0,\lambda) = \frac{\rho(\lambda)^{\pm 1} - y_1(T,\lambda)}{y_2(T,\lambda)} = \pm i \sqrt{\lambda} \pm \frac{u(0)}{2 i \sqrt{\lambda}}  + \frac{u_x(0)}{4 \lambda} +  O(\sqrt{\lambda}^{-3})\end{equation}
as $\lambda \to \infty$ for $\lambda \in \Omega_s$.
\end{prop}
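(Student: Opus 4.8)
The plan is to reduce both sides of (\ref{eq:propx01}) and (\ref{eq:propx02}) to ratios of the functions $m$ and $n$ of Proposition \ref{prop:assmn}, to eliminate the root $\rho(\lambda)$ using the Vieta relation $\rho(\lambda)+\rho(\lambda)^{-1}=\Delta(\lambda)$ attached to (\ref{eq:charpoly}), and then to insert the expansions already in hand. To begin, differentiating (\ref{eq:bloflo}) in $x$ and evaluating at $x=0$ (using $y_1(0,\lambda)=1$, $y_1'(0,\lambda)=0$, $y_2(0,\lambda)=0$, $y_2'(0,\lambda)=1$) gives the first equality in each of (\ref{eq:propx01}) and (\ref{eq:propx02}), so only the asymptotics need be established. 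From $y^\pm=y_1\pm i\sqrt\lambda\,y_2$ together with $m=e^{i\sqrt\lambda x}y^-$, $n=e^{i\sqrt\lambda x}y^+$ one obtains, after a short computation and differentiation, closed formulas for $y_1(T,\lambda)$, $y_2(T,\lambda)$ and $y_2'(T,\lambda)$ in terms of $m(T,\lambda),n(T,\lambda),m'(T,\lambda),n'(T,\lambda)$, each carrying a common factor $e^{-i\sqrt\lambda T}$. That factor cancels in the relevant ratios, giving the exact identities
\[ -\frac{y_1(T,\lambda)}{y_2(T,\lambda)}=i\sqrt\lambda+\frac{2i\sqrt\lambda\,n(T,\lambda)}{m(T,\lambda)-n(T,\lambda)},\qquad \frac{y_2'(T,\lambda)}{y_2(T,\lambda)}=-i\sqrt\lambda+\frac{n'(T,\lambda)-m'(T,\lambda)}{n(T,\lambda)-m(T,\lambda)}. \]
Since $\rho(\lambda)^{-1}=\Delta(\lambda)-\rho(\lambda)=y_1(T,\lambda)+y_2'(T,\lambda)-\rho(\lambda)$, we have $\rho(\lambda)^{-1}-y_1(T,\lambda)=y_2'(T,\lambda)-\rho(\lambda)$, so the quantity appearing in (\ref{eq:propx01})--(\ref{eq:propx02}) equals $-y_1(T,\lambda)/y_2(T,\lambda)+\rho(\lambda)/y_2(T,\lambda)$ for the upper sign and $y_2'(T,\lambda)/y_2(T,\lambda)-\rho(\lambda)/y_2(T,\lambda)$ for the lower sign.

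The key point — and the one step needing genuine care — is that $\rho(\lambda)/y_2(T,\lambda)$ decays faster than any power of $\lambda$ as $\lambda\to\infty$ in $\Omega_s$, so it is absorbed into the error terms of both statements. By Proposition \ref{prop:assdelta}, $|\Delta(\lambda)|=|e^{-i\sqrt\lambda T}|\,|1+O(\sqrt\lambda^{-1})|$ is exponentially large in $\Omega_s$, since $|e^{-i\sqrt\lambda T}|=e^{T\,\text{Im}\sqrt\lambda}$ and $\text{Im}\sqrt\lambda\ge\sin(s/2)|\sqrt\lambda|$ there. Because $|\rho(\lambda)|<1$ and $\rho(\lambda)^{-1}=\Delta(\lambda)-\rho(\lambda)$, this forces $|\rho(\lambda)|\le(|\Delta(\lambda)|-1)^{-1}$ for large $|\lambda|$, while Proposition \ref{prop:assmn} gives $|y_2(T,\lambda)|=|e^{-i\sqrt\lambda T}|\,|n(T,\lambda)-m(T,\lambda)|/(2|\sqrt\lambda|)\ge|e^{-i\sqrt\lambda T}|/(4|\sqrt\lambda|)$ for large $|\lambda|$ (as $n(T,\lambda)-m(T,\lambda)\to-1$). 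Combining the two bounds yields $|\rho(\lambda)/y_2(T,\lambda)|=O\!\big(|\sqrt\lambda|\,e^{-2T\sin(s/2)|\sqrt\lambda|}\big)$, which is negligible against every error term in the statement.

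It then remains to substitute the expansions of Proposition \ref{prop:assmn} into the two displayed ratios. For (\ref{eq:propx01}), when only $u\in L^\infty(\mathbb{R})$, the crude orders $m(T,\lambda)-n(T,\lambda)=1+O(\sqrt\lambda^{-1})$, $n(T,\lambda)=O(\lambda^{-1})$, $n'(T,\lambda)-m'(T,\lambda)=O(\sqrt\lambda^{-1})$ make both correction terms immediately $O(\sqrt\lambda^{-1})$. For (\ref{eq:propx02}), under the additional hypothesis that $u''$ is bounded, one uses $m(T,\lambda)-n(T,\lambda)=1-\tfrac{Q}{2i\sqrt\lambda}+O(\lambda^{-1})$, hence $(m(T,\lambda)-n(T,\lambda))^{-1}=1+\tfrac{Q}{2i\sqrt\lambda}+O(\lambda^{-1})$, together with the finer expansions of $n(T,\lambda)$ and $m'(T,\lambda)$; in particular $2i\sqrt\lambda\,n(T,\lambda)=\tfrac{u(0)}{2i\sqrt\lambda}+\tfrac{u'(0)+u(0)Q}{4\lambda}+O(\sqrt\lambda^{-3})$. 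Multiplying out, the terms proportional to $u(0)Q/\lambda$ produced by the product with $(m-n)^{-1}$ exactly cancel the $u(0)Q/\lambda$ terms already carried by $n(T,\lambda)$ (respectively by $m'(T,\lambda)$ in the lower case), leaving $\tfrac{u(0)}{2i\sqrt\lambda}+\tfrac{u_x(0)}{4\lambda}+O(\sqrt\lambda^{-3})$ in the upper case and $-\tfrac{u(0)}{2i\sqrt\lambda}+\tfrac{u_x(0)}{4\lambda}+O(\sqrt\lambda^{-3})$ in the lower case; this is exactly (\ref{eq:propx02}).

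I expect the only real obstacle to be the bookkeeping in this last step: tracking which error terms are $O(\lambda^{-1})$ versus $O(\sqrt\lambda^{-3})$ and checking the $u(0)Q$ cancellations carefully. Once the reduction to $m$ and $n$ and the super-polynomial estimate on $\rho(\lambda)/y_2(T,\lambda)$ are in place, everything else is a short algebraic manipulation.
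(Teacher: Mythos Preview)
Your proposal is correct and follows essentially the same route as the paper: reduce $\psi_x^+(0,\lambda)$ to $-y_1(T,\lambda)/y_2(T,\lambda)$ and $\psi_x^-(0,\lambda)$ to $y_2'(T,\lambda)/y_2(T,\lambda)$ modulo the exponentially small remainder $\rho(\lambda)/y_2(T,\lambda)$, rewrite these ratios in terms of $m,n,m',n'$, and substitute the expansions of Proposition~\ref{prop:assmn}. The only cosmetic difference is that the paper expresses $-y_1/y_2$ via the geometric series $i\sqrt\lambda+2i\sqrt\lambda\sum_{j\ge1}(n/m)^j$ rather than your equivalent closed form $i\sqrt\lambda+2i\sqrt\lambda\,n/(m-n)$, and it leaves the $u(0)Q$ cancellation implicit where you spell it out.
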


\begin{proof}
From the definitions of $\rho(\lambda)$ and $\rho(\lambda)^{-1}$ in terms of contraction mappings, it is clear that
\begin{equation} \rho(\lambda)^{-1} = \Delta(\lambda) + O(\Delta(\lambda)^{-1}),\end{equation}
\begin{equation} \rho(\lambda) = O(\Delta(\lambda)^{-1}). \end{equation}
As $\lambda \to \infty$ for $\lambda \in \Omega_s$, the $O(\Delta(\lambda)^{-1})$ decay exponentially, and so does $1/y_2(T,\lambda)$.
The asymptotic behavior of $\psi_x^+$ is therefore determined by
\begin{equation} \psi_x^+(0,\lambda) = -\frac{y_1(T,\lambda)}{y_2(T,\lambda)} + O(\sqrt{\lambda}^{-3}), \end{equation}
and the asymptotic behavior of $\psi_x^-$ is determined by
\begin{equation} \psi_x^-(0,\lambda) = \frac{\Delta(\lambda) - y_1(T,\lambda)}{y_2(T,\lambda)} + O(\sqrt{\lambda}^{-3}) = \frac{y_{2x}(T,\lambda)}{y_2(T,\lambda)} + O(\sqrt{\lambda}^{-3}). \end{equation}
In terms of $m$ and $n$ we have
\begin{equation} \label{eq:plusmn} -\frac{y_1(T,\lambda)}{y_2(T,\lambda)} = i \sqrt{\lambda} \frac{m(T,\lambda) + n(T,\lambda)}{m(T,\lambda) - n(T,\lambda)} = i \sqrt{\lambda} + 2 i \sqrt{\lambda} \sum_{j = 1}^\infty \left(\frac{n(T,\lambda)}{m(T,\lambda)} \right)^j.\end{equation}
Applying the expansions for $n(T,\lambda)$ and $m(T,\lambda)$ from proposition \ref{prop:assmn} into (\ref{eq:plusmn}) give (\ref{eq:propx01},\ref{eq:propx02}) for the case of $\psi_x^+(0,\lambda)$.
In terms of $m$, $m'$, $n$ and $n'$ we have
\begin{equation} \label{eq:minusmn} \frac{y_{2x}(T,\lambda)}{y_2(T,\lambda)} = -i \sqrt{\lambda} + \frac{m'(T,\lambda) - n'(T,\lambda)}{m(T,\lambda) - n(T,\lambda)}.\end{equation}
Applying the expansions for $n(T,\lambda)$, $n'(T,\lambda)$, $m(T,\lambda)$ and $m'(T,\lambda)$ from proposition \ref{prop:assmn} into (\ref{eq:minusmn}) give (\ref{eq:propx01},\ref{eq:propx02}) for the case of $\psi_x^-(0,\lambda)$.
\end{proof}

\begin{proof}[Proof of Lemma \ref{lma:BFass}]

The main difficulty in proving the lemma is dealing with the exponentially decaying solutions in $\lambda$, because in these cases all the exponentially growing parts of the $y_1(x,\lambda)$ and $y_2(x,\lambda)$ terms must cancel.
We therefore must instead factor the solutions into exponentially decaying and periodic parts.
This proof also works for the exponentially growing solutions, so we apply it for all cases of the expansions.
To handle expansions for both $\psi^+$ and $\psi^-$ let us consider the $z$ plane defined by $z^2 = \lambda$ and use the normalized Bloch--Floquet solution $\tilde \psi$ and $\tilde \rho$ on the $z$ plane defined in (\ref{eq:deftpsi}) and (\ref{eq:deftrho}) but for $z \in \mathbb{C}$.
We will justify an expansion for $\tilde \psi(x,z)$ and then recover expansions for $\psi^+(x,\lambda)$ and $\psi^-(x,\lambda)$ using $\psi^+(x,\lambda) = \tilde \psi(x,\sqrt{\lambda})$ and $\psi^-(x,\lambda) = \tilde \psi(x,-\sqrt{\lambda})$, where we should recall that our choice of square root has a branch cut on the positive real numbers and takes values in the upper half plane.
It follows from $|\rho(\lambda)|< 1$ and $|\rho(\lambda)^{-1}| > 1$ for $\lambda \in \mathbb{C} \setminus \sigma(L)$ that $|\tilde \rho(z)|<1$ for $z \in \mathbb{C}^+$ and $|\tilde \rho(z)|>1$ for $z \in \mathbb{C}^-$.
Let us fix some $0 < s < \tfrac{\pi}{4}$ and define the domain $\tilde \Omega_s = \{z \in \mathbb{C} : z^2 \in \Omega_{s} \}$.
The domain $\tilde \Omega_s \subset \mathbb{C}$ with sectors around the positive and negative axes removed is used to get tight bounds on exponential integrals, and to make sure exponentials of the form  $e^{\pm i c z}$ for $c > 0$ decay uniformly in $\mathbb{C}^\pm \cap \tilde \Omega_s$ as $z \to \infty$.

We must first derive some simple large $z$ asymptotic descriptions following from (\ref{eq:assdelta}).
By combining the fact that $\tilde \rho(z)$ is a fixed point of the map $\rho \to \Delta(z^2) + 1/\rho$ with $|\tilde \rho(z)| > 1$ for $z \in \mathbb{C}^-$ we find that
\begin{equation} \tilde \rho(z) = \Delta (z^2)(1 + O(\Delta (z^2)^{-1})) = e^{i z T}\left(1 + \frac{1}{2 i z} Q + O(z^{-2}) \right)\end{equation}
for $z \in \tilde \Omega_s \cap \mathbb{C}^-$.
It can be verified from the definition of $\tilde \rho(z)$ in terms of $\rho(\lambda)$ and $\rho(\lambda)^{-1}$ that $\tilde \rho(z) = \tilde \rho(-z)^{-1}$, which implies that the large $z$ asymptotic description 
\begin{equation} \label{eq:rhoass} \tilde \rho(z) = e^{i z T}\left(1 + \frac{1}{2 i z} Q + O(z^{-2}) \right)\end{equation}
as $z \to \infty$ valid for $z \in \tilde \Omega_s$.
Applying a logarithm on both sides of (\ref{eq:rhoass}) gives the asymptotic description
\begin{equation} \label{eq:logass} \log(\tilde \rho(z)) = i z T + \log \left(1 + \frac{1}{2 i z} Q + O(z^{-2}) \right) = i z T + \frac{1}{2 i z} Q + O(z^{-2})\end{equation}
as $z \to \infty$ valid for $z \in \tilde \Omega_s$.

A bit of care must be taken in defining $\log(\tilde \rho(z))$ and establishing (\ref{eq:logass}), and the argument requires that we consider the cases of $z \in \mathbb{C}^+ \cap \Omega_s$ and $z \in \mathbb{C}^- \cap \Omega_s$ separately.
We will give the argument for $z \in \mathbb{C}^+ \cap \Omega_s$, and leave it to the reader to make the analogous argument for $z \in \mathbb{C}^- \cap \Omega_s$.
The function $\tilde \rho(z)$ is nonzero in the simply connected domain $\mathbb{C}^+ \cap \Omega_s$ and $\tilde \rho(z) \to 1$ and $z \to 0$ in $\mathbb{C}^+ \cap \Omega_s$.
We can therefore define the logarithm by
\begin{equation} \label{eq:deftlogrho} \log(\tilde \rho(z)) = \int_0^z \frac{1}{\tilde \rho(z')} \frac{d \tilde \rho}{dz'}(z') dz'\end{equation}
where the contour of integration from $0$ to $z$ lies in $\mathbb{C}^+ \cap \Omega_s$.
If follows from (\ref{eq:rhodelta}) and the reality of $\Delta(\lambda)$ for $\lambda \in \mathbb{R}$ that $\tilde \rho(z)$ is real for $z \in i \mathbb{R}^+$, and so $\log(\tilde \rho(z))$ must also be real by (\ref{eq:deftlogrho}).
Combining the fact that $\log(\tilde \rho(z))$ must be real for $z \in i \mathbb{R}^+$ with (\ref{eq:rhoass}), we see that the dominant asymptotic behavior of $\log(\tilde \rho(z))$ is given by $i z T$, and that for large $z$ we should expand $\log \left(1 + \frac{1}{2 i z} Q + O(z^{-2}) \right)$ in (\ref{eq:logass}) using the principle branch.

From (\ref{eq:logass}) we derive the further asymptotic descriptions
\begin{equation} \label{eq:logsqass} T^{-2} \log(\tilde \rho(z))^2 + z^2 = Q T^{-1} + O(z^{-1}),\end{equation}
\begin{equation} \label{eq:logreass} \frac{1}{2 \log(\tilde \rho(z))} = \frac{1}{2 i z} T^{-1} + O(z^{-2}),\end{equation}
and
\begin{align*} \numberthis
\label{eq:assrhoexp} \tilde \rho(z)^{xT^{-1}} & = e^{i z x} e^{ x T^{-1} \log \left(1 + \frac{1}{2 i z} Q + O(z^{-2}) \right) }  \\
& = e^{i z x} \left(1 + \frac{1}{2 i z} x T^{-1} Q + O(z^{-2}) \right)
\end{align*}
as $z \to \infty$ valid for $z \in \tilde \Omega_s$.

The function $\tilde \psi(x,z)$ is a eigenfunction of the right shift operator $\tilde \psi(x,z) \to \tilde \psi(x+T,z)$ with eigenvalue $\tilde \rho(z)$, therefore
\begin{equation} \label{eq:tpsip} \tilde \psi(x,z) = \tilde \rho(z)^{xT^{-1}} p(x,z)\end{equation}
where $p(x,z)$ has period $T$.
Moreover, plugging $\tilde \rho(z)^{xT^{-1}} p(x,z)$ into Hill's equation gives the Sturm--Liouville differential equation
\begin{equation} \label{eq:periodic} \frac{d}{dx} \left( \tilde \rho(z)^{2 x T^{-1}} \frac{d p(x,z)}{dx} \right) = \tilde \rho(z)^{2 x T^{-1}} \left(u(x) - T^{-2} \log(\tilde \rho(z))^2 - z^2\right) p(x,z) \end{equation}
for $p(x,z)$.
We can rewrite equation (\ref{eq:periodic}) in integral form as
\begin{align*} \numberthis \label{eq:inteqp}
p(x,z) = & p(0,z) + \frac{Tp'(0,z)}{2 \log \tilde \rho(z)} \left(1 - \tilde \rho(z)^{-2 x T^{-1}} \right) \\
& + \frac{T}{2 \log \tilde \rho(z)} \int_0^x \left(1 - \tilde \rho(z)^{2 (t-x) T^{-1}} \right) (u(t) - Q T^{-1} + \epsilon(z)) p(t,z) dt
\end{align*}
where $\epsilon$ is the $O(z^{-1})$ error term in (\ref{eq:logsqass}).
From (\ref{eq:tpsip}) we find $p(0,z) = 1$ and
\begin{equation} p'(0,z) = \tilde \psi'(0,z) - \frac{\log(\tilde \rho(z))}{T}.\end{equation}
Using (\ref{eq:propx01}) and (\ref{eq:logass}) in the above formula for $p'(0,z)$ implies the asymptotic behavior $p'(0,z) = O(z^{-1})$ as $z \to \infty$ for $z \in \tilde \Omega_s$.

The existence and uniqueness of the solution to equation (\ref{eq:inteqp}) when $z \in \tilde \Omega_s \cap \mathbb{C}$ is equivalent to showing invertibility of
\begin{equation} (1-F) : L^\infty [0,T] \to L^\infty [0,T] \text{ or } (1-F) : L^\infty [-T,0] \to L^\infty [-T,0], \end{equation}
where
\begin{align*} \numberthis
F f (x) = & \frac{T}{2 \log \tilde \rho(z)} \int_0^x \left(1 - \tilde \rho(z)^{2 (t-x) T^{-1}} \right)  (u(t) - QT^{-1} + \epsilon(z)) f(t) dt 
\end{align*}
because $p$ is periodic with period $T$.
This just requires that we show that $\sum_{n=0}^\infty F^n g$ converges for ${g \in L^\infty [0,T]}$ or $g \in L^\infty [-T,0]$ respectively.

Since $|\tilde \rho(z)| < 1$ for $z \in \mathbb{C}^+$ and $|\tilde \rho(z)| > 1$ for $z \in \mathbb{C}^-$, the bound
\begin{equation} \label{eq:nextgen}|F^n g(x)| \le \frac{  (T^2 \|u\|_{L^\infty}(\mathbb{R}) + T|Q| + T^2 |\epsilon(z)|)^n}{ |\log \tilde \rho(z)|^n n!} \|g\|_{L^\infty (I)} \end{equation}
holds for $z \in \tilde \Omega_s \cap \mathbb{C}^-$ and $x \in I = [0,T]$ or $z \in \tilde \Omega_s \cap \mathbb{C}^+$ and $x \in I = [-T,0]$.
We never have to worry about $\log(\tilde \rho(z)) = 0$ in the above bound because $\tilde \rho(z) = 1$ happens only at the periodic eigenvalues which are real.
Therefore, the Neumann series
\begin{equation} \label{eq:nextgenp} p(x,z) = \sum_{n = 0}^\infty F^n (f) (x,z)\end{equation}
is the unique solution to (\ref{eq:periodic}) for $z \in \tilde \Omega_s \cap\mathbb{C}$, where from (\ref{eq:inteqp})
\begin{equation} \label{eq:fboundK}
f(x,z) = 1 + \frac{Tp'(0,z)}{2 \log \tilde \rho(z)} \left(1 - \tilde \rho(z)^{-2 x T^{-1}} \right) = 1 + O(z^{-2}) .\end{equation}
The estimate in (\ref{eq:fboundK}) holds for $z \in \Omega_{s} \cap \mathbb{C}_{-}$ and $x \in (0,T)$. and for $z \in \Omega_{s} \cap \mathbb{C}_{+}$ and $x \in (-T,0)$.

There exists some radius $R$ such that $|\epsilon(z)| \le M_\epsilon$ and $|\log \tilde \rho(z)| \ge M_\rho$ for $z \in \tilde \Omega_s \cap \{|z| \ge R\}$ because of the asymptotic descriptions (\ref{eq:logass}) and $\epsilon(z) = O(z^{-1})$.
Therefore, by (\ref{eq:nextgen}) and (\ref{eq:nextgenp}) the bound
\begin{equation} \label{eq:punif} |p(x,z)| \le \exp\left( \frac{T^2 \|u\|_{L^\infty(\mathbb{R})} + T|Q| + T^2 M_\epsilon}{M_\rho} \right) \|f(\cdot,z)\|_{L^\infty(I)} \end{equation}
where $I = [0,T]$ for $z \in \mathbb{C}^-$ and $I = [-T,0]$ for $z \in \mathbb{C}^+$ holds for $|z| \in \tilde \Omega_s\cap \{z \ge R\}$.
It follows that $p(x,z)$ is bounded at $z \to \infty$ in the desired domains.

Combing (\ref{eq:fboundK}) with (\ref{eq:nextgen}) and using (\ref{eq:logreass}) lets us conclude that $F^n(f)(x,z) = O(z^{-2})$ as $z \to \infty$ for $n \ge 2$, where $z \in \tilde \Omega_s$ and $x \in (0,T]$ for $z \in \mathbb{C}^-$ and $x \in [-T,0)$ for $z \in \mathbb{C}^+$.
From (\ref{eq:logreass}), (\ref{eq:fboundK}) and $\epsilon(z) = O(z^{-1})$ we find
\begin{align}
\label{eq:Fboundf1} F(f)(x,\lambda) & = \frac{1}{2iz} \int_0^x \left(1 - \tilde \rho(z)^{2 (t-x) T^{-1}} \right)  (u(t) - QT^{-1}) dt + O(z^{-2}) \\
\label{eq:Fboundf2} & = \frac{1}{2iz} \int_0^x (u(t) - QT^{-1}) dt + O(z^{-2}).
\end{align}

Therefore, the Neumann series solves for $p$ and up to $O(z^{-2})$ as
\begin{equation} \label{eq:pexp} p(x,z) = 1 + \frac{1}{2 i z} \int_0^x u(t) dt - \frac{x T^{-1} Q}{2 i z} + O(z^{-2})\end{equation}
as $z \to \infty$.
By multiplying the asymptotic description (\ref{eq:pexp}) for $p(x,z)$ by the asymptotic description (\ref{eq:assrhoexp}) for $\tilde \rho(z)^{xT^{-1}}$ we find that $\tilde \psi$ has the  asymptotic description
\begin{equation} \label{eq:assnew123} \tilde \psi (x,z) = \tilde \rho(z)^{x T^{-1}} p (x,z) = e^{i z x} \left( 1 + \frac{1}{2 i z} \int_0^x u(t) dt + O(z^{-2}) \right)\end{equation}
as $z \to \infty$ valid for $z \in \tilde \Omega_s$. 

There are no obstacles to differentiating (\ref{eq:assnew123}) term-by-term, so $\tilde \psi_x$ has the asymptotic description
\begin{equation}
\tilde \psi_x (x,z) = \tilde \rho(z)^{xT^{-1}}(\log(\tilde \rho(z)) T^{-1} p(x,z) + p_x(x,z)) = (iz + O(1))e^{izx}\end{equation}
as $z \to \infty$ valid for $z \in \tilde \Omega_s$.
 \end{proof}

\section{Phr\'{a}gmen--Lindel\"{o}f Theorem} \label{sec:PL}
One of the difficulties in dealing with the Riemann--Hilbert problem for infinite gap potentials is that we will not be able to have uniform asymptotic bounds that hold as the complex parameter approaches infinity in any direction. Instead our asymptotic descriptions will only be valid in regions which exclude some sector containing the positive semiaxis.
To be able to apply Liouville's theorem, we establish bounds on our Riemann--Hilbert problem in these bad sectors as in the following version of the Phragm\'{e}n--Lindel\"{o}f theorem from \cite{Ti39}. 

\begin{thm}[Phragm\'{e}n--Lindel\"{o}f] \cite[page  177]{Ti39}
\label{thm:PL}
Let $\alpha,\beta \in \mathbb{R}$ with $\alpha<\beta$ be such that $\beta - \alpha < 2 \pi$ and let $p > 0$ satisfy $p < \frac{\pi}{|\beta - \alpha|}$.
Choose a branch of $\arg$ so that $(\alpha,\beta) \subset \text{Im}(\arg)$.
Let $f(\lambda)$ be a holomorphic function for
\begin{equation}\lambda \in \Upsilon_{\alpha,\beta} := \{ \lambda \in \mathbb{C} : \alpha < \arg(\lambda) < \beta \}\end{equation}
that extends to a continuous function on $\overline{\Upsilon}_{\alpha,\beta}$.
If $f(\lambda)$ is bounded for $\lambda \in \partial \Upsilon_{\alpha,\beta}$ and $|f(\lambda)| \le M e^{c |\lambda|^p}$ for $\lambda \in \Upsilon_{\alpha,\beta}$ then $f(\lambda)$ is bounded for $\lambda \in \Upsilon_{\alpha,\beta}$.
\end{thm}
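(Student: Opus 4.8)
The plan is to reduce to a symmetric sector and then damp $f$ by a subexponential factor chosen using the strict inequality in the hypothesis $p < \frac{\pi}{|\beta-\alpha|}$. First I would rotate and rescale the variable so that the sector becomes $\Upsilon = \{\lambda \in \mathbb{C} : |\arg\lambda| < \theta\}$ with $2\theta = \beta - \alpha$, working with a branch of $\arg$ whose range contains $(-\theta,\theta)$. This change of variables preserves all the hypotheses: holomorphy and continuity up to $\overline{\Upsilon}$ are unaffected, boundedness of $f$ on the boundary rays is unaffected, and a bound of the form $M e^{c|\lambda|^p}$ is preserved after adjusting the constants. The hypothesis becomes $p\theta < \frac{\pi}{2}$, so I can fix an exponent $q$ with $p < q$ and $q\theta < \frac{\pi}{2}$ simultaneously.

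Next I would introduce the auxiliary family $g_\varepsilon(\lambda) := f(\lambda)\,e^{-\varepsilon \lambda^q}$ for $\varepsilon > 0$, where $\lambda^q$ uses the chosen branch. Since $|\arg\lambda| \le \theta$ throughout $\overline{\Upsilon}$, one has $|\arg(\lambda^q)| \le q\theta < \frac{\pi}{2}$, hence $\mathrm{Re}(\lambda^q) \ge |\lambda|^q \cos(q\theta) =: \delta|\lambda|^q$ with $\delta > 0$. Consequently, on $\Upsilon$ one gets $|g_\varepsilon(\lambda)| \le M_1 e^{\,c|\lambda|^p - \varepsilon\delta|\lambda|^q}$, which tends to $0$ as $|\lambda| \to \infty$ because $q > p$; and on the two boundary rays, $|g_\varepsilon(\lambda)| \le M e^{-\varepsilon\delta|\lambda|^q} \le M$.

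Then I would apply the maximum modulus principle on the truncated sector $\Upsilon \cap \{|\lambda| < R\}$: on the straight portions of the boundary $|g_\varepsilon| \le M$, while on the circular arc $|\lambda| = R$ we have $|g_\varepsilon| \le M$ once $R$ is large enough, by the decay just established. Hence $|g_\varepsilon(\lambda)| \le M$ for all $\lambda \in \Upsilon$. Finally, fixing $\lambda$ and letting $\varepsilon \to 0^+$ in the identity $|f(\lambda)| = |g_\varepsilon(\lambda)|\,e^{\varepsilon\,\mathrm{Re}(\lambda^q)} \le M e^{\varepsilon\,\mathrm{Re}(\lambda^q)}$ yields $|f(\lambda)| \le M$, which is in fact stronger than the asserted boundedness.

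The one genuinely delicate point is the existence of the exponent $q$: it is precisely the strict inequality $p < \frac{\pi}{|\beta-\alpha|}$ that leaves an open interval from which to choose $q$, so that both $q > p$ (the damping beats the growth) and $q\theta < \frac{\pi}{2}$ ($\mathrm{Re}(\lambda^q)$ stays comparable to $|\lambda|^q$ on the closed sector) can hold at once; everything else is a routine application of the maximum principle together with the two limiting arguments ($R \to \infty$, then $\varepsilon \to 0$). As this is the classical Phragm\'en--Lindel\"of argument, one could alternatively simply invoke \cite{Ti39} for the statement, as recorded above.
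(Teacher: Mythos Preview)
Your argument is correct and is precisely the classical Phragm\'en--Lindel\"of proof; the paper itself does not supply a proof but simply cites \cite[page 177]{Ti39}, where the same damping-by-$e^{-\varepsilon\lambda^q}$ argument appears. There is nothing to compare: you have reproduced the standard proof that the citation points to.
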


The following algebras of holomorphic functions will aid in the establishment of bounds of the form $|f(\lambda)| \le M e^{c |\lambda|^p}$.
\begin{defn} \label{def:algp}
Let $\Omega \subset \mathbb{C}$ be some domain, and let $\mathcal{A}(\Omega)$ be the algebra of holomorphic functions on $\Omega$.
For $p > 0$ we define the Phragm\'{e}n--Lindel\"{o}f subalgebras
\begin{equation} \mathcal{A}_p (\Omega) = \{ f \in \mathcal{A}(\Omega) : \text{There exist } c,M > 0 \text{ such that } |f(\lambda)| \le M e^{c |\lambda|^p} \}. \end{equation}
\end{defn}

We now list some propositions about functions in $\mathcal{A}_p (\Omega)$ which will be useful in the next section. Since these are standard, we omit their proofs.

\begin{prop} \label{prop:pl0} If $\Omega' \subset \Omega$ and $p' \ge p$, then $\mathcal{A}_p (\Omega) \subset \mathcal{A}_{p'}(\Omega')$. \end{prop}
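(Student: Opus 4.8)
The statement to prove is Proposition~\ref{prop:pl0}: if $\Omega' \subset \Omega$ and $p' \ge p$, then $\mathcal{A}_p(\Omega) \subset \mathcal{A}_{p'}(\Omega')$.

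The plan is to unwind the definitions and argue directly. Suppose $f \in \mathcal{A}_p(\Omega)$. Then $f$ is holomorphic on $\Omega$, and since $\Omega' \subset \Omega$, the restriction $f|_{\Omega'}$ is holomorphic on $\Omega'$, so $f \in \mathcal{A}(\Omega')$. By definition of $\mathcal{A}_p(\Omega)$, there exist constants $c, M > 0$ such that $|f(\lambda)| \le M e^{c|\lambda|^p}$ for all $\lambda \in \Omega$, and in particular this bound holds for all $\lambda \in \Omega'$. It therefore remains only to promote the exponent $p$ to $p'$.

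For the exponent promotion, I would split $\Omega'$ into the bounded part $\{\lambda \in \Omega' : |\lambda| \le 1\}$ and the unbounded part $\{\lambda \in \Omega' : |\lambda| > 1\}$. On the region $|\lambda| > 1$ one has $|\lambda|^p \le |\lambda|^{p'}$ since $p' \ge p$, hence $|f(\lambda)| \le M e^{c|\lambda|^p} \le M e^{c|\lambda|^{p'}}$ directly. On the region $|\lambda| \le 1$, the quantities $e^{c|\lambda|^p}$ and $e^{c|\lambda|^{p'}}$ are both bounded between positive constants, so one can simply enlarge $M$ to a constant $M' = M e^{c}$ (which dominates $M e^{c|\lambda|^p}$ there while $e^{c|\lambda|^{p'}} \ge 1$), giving $|f(\lambda)| \le M' \le M' e^{c|\lambda|^{p'}}$. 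Taking the maximum of the two constants produces a single $M'' = \max\{M, M'\}$ with $|f(\lambda)| \le M'' e^{c|\lambda|^{p'}}$ for all $\lambda \in \Omega'$, which shows $f \in \mathcal{A}_{p'}(\Omega')$.

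There is no real obstacle here; the only mild subtlety is handling the case where $\Omega'$ may contain points of small modulus (or the origin), which is why the bounded/unbounded split is cleaner than trying to compare $|\lambda|^p$ and $|\lambda|^{p'}$ globally. This is exactly the kind of routine inclusion the authors flag as standard and omit, so a one-paragraph argument of the above form suffices.
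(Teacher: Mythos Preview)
Your argument is correct, and the paper itself omits the proof of this proposition entirely as standard; your direct verification (restrict to $\Omega'$, then split into $|\lambda|\le 1$ and $|\lambda|>1$ to promote the exponent) is exactly the routine check one would supply.
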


\begin{prop} \label{prop:pl1} If $f$ is an entire function of order $r$ and $p > r$, then $f \in \mathcal{A}_p (\mathbb{C})$.  \end{prop}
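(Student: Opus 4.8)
The plan is to unwind the definition of order (\ref{eq:deforder}) and then compare powers of $|\lambda|$. Since $p > r$ and $r$ is the infimum of the set $\{r_0 > 0 : f(\lambda) = O(e^{|\lambda|^{r_0}})\}$, that set must contain some $r_0$ with $r \le r_0 < p$; otherwise $p$ would be a lower bound for the set strictly larger than $r$, contradicting that $r$ is the infimum. Fixing such an $r_0$, the relation $f(\lambda) = O(e^{|\lambda|^{r_0}})$ as $\lambda \to \infty$ unpacks to the statement that there exist $M_0 > 0$ and $R \ge 1$ with $|f(\lambda)| \le M_0 e^{|\lambda|^{r_0}}$ whenever $|\lambda| \ge R$.

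Next I would deal with the bounded region separately. Since $f$ is entire, it is continuous on the compact disc $\{|\lambda| \le R\}$ and hence bounded there, say by $M_1 > 0$. Because $e^{|\lambda|^{r_0}} \ge 1$ for all $\lambda$, setting $M_2 = \max\{M_0, M_1\}$ gives $|f(\lambda)| \le M_2 e^{|\lambda|^{r_0}}$ for every $\lambda \in \mathbb{C}$.

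Finally I would compare exponents. For $|\lambda| \ge 1$ we have $|\lambda|^{r_0} \le |\lambda|^p$ because $r_0 < p$, so $e^{|\lambda|^{r_0}} \le e^{|\lambda|^p}$; for $|\lambda| \le 1$ we have $e^{|\lambda|^{r_0}} \le e \le e\, e^{|\lambda|^p}$. In either case $e^{|\lambda|^{r_0}} \le e\, e^{|\lambda|^p}$, so $|f(\lambda)| \le (e M_2) e^{|\lambda|^p}$ for all $\lambda \in \mathbb{C}$, which is precisely the assertion that $f \in \mathcal{A}_p(\mathbb{C})$ (with the choice $c = 1$ and $M = e M_2$ in Definition \ref{def:algp}).

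There is no substantive obstacle here; the content is purely the bookkeeping around the infimum in the definition of order together with an elementary comparison of $|\lambda|^{r_0}$ and $|\lambda|^p$. The only point deserving a word of care is the first step: the definition of order guarantees an $O(e^{|\lambda|^{r_0}})$ bound only for exponents $r_0$ lying in the admissible set, not for $r$ itself, so one must explicitly invoke $p > r$ to extract an admissible $r_0$ strictly below $p$.
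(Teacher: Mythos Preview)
Your proof is correct; the paper itself omits the proof of this proposition as standard, so there is nothing to compare against beyond noting that your argument is precisely the routine unwinding the authors had in mind.
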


\begin{prop} \label{prop:pl2} $\mathcal{A}_p(\Omega)$ is a subalgebra of $\mathcal{A}(\Omega)$. If $f \in \mathcal{A}_p (\Omega)$ and $0 < q < 1$, then $f^q \in \mathcal{A}_p (\Omega')$ where { $\Omega' \subset \Omega$ is some domain on which $f^q$ can be defined as a single valued function}.  \end{prop}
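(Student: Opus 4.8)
The plan is to read off everything from the defining inequality of \(\mathcal{A}_p(\Omega)\), and to treat the power \(f^q\) by noting that the modulus of a holomorphic branch of a power is intrinsic (independent of the branch).

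For the subalgebra assertion, suppose \(f, g \in \mathcal{A}_p(\Omega)\), so there are constants \(M_1, M_2, c_1, c_2 > 0\) with \(|f(\lambda)| \le M_1 e^{c_1 |\lambda|^p}\) and \(|g(\lambda)| \le M_2 e^{c_2 |\lambda|^p}\) on \(\Omega\). First I would check closure under linear combinations: for \(a, b \in \mathbb{C}\) one has
\[
|a f(\lambda) + b g(\lambda)| \le \big(|a| M_1 + |b| M_2\big)\, e^{\max(c_1, c_2)|\lambda|^p},
\]
so \(a f + b g \in \mathcal{A}_p(\Omega)\); and closure under products follows from
\[
|f(\lambda) g(\lambda)| \le M_1 M_2\, e^{(c_1 + c_2)|\lambda|^p},
\]
so \(f g \in \mathcal{A}_p(\Omega)\). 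Since the constant function \(1\) satisfies \(1 \le e^{c |\lambda|^p}\) for any \(c > 0\), it lies in \(\mathcal{A}_p(\Omega)\), and hence \(\mathcal{A}_p(\Omega)\) is a (unital) subalgebra of \(\mathcal{A}(\Omega)\).

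For the statement on \(f^q\), let \(\Omega' \subset \Omega\) be a domain on which a holomorphic branch of \(f^q\) exists — for concreteness, any simply connected subdomain of \(\Omega\) on which \(f\) does not vanish, on which we may put \(f^q = \exp(q \log f)\) for a chosen branch of \(\log f\). Any two such branches of \(f^q\) differ by a multiplicative constant of the form \(e^{2\pi i q k}\), which has modulus one, so \(|f^q(\lambda)| = e^{q \operatorname{Re} \log f(\lambda)} = |f(\lambda)|^q\) is branch-independent. Therefore, on \(\Omega'\),
\[
|f^q(\lambda)| = |f(\lambda)|^q \le M^q\, e^{cq|\lambda|^p},
\]
and since \(cq > 0\) this shows \(f^q \in \mathcal{A}_p(\Omega')\). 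There is no substantial obstacle in this argument; the only point requiring a word of care is precisely this last observation, namely that the passage to \(f^q\) involves a choice of subdomain and of branch, and one must confirm — as above — that the \(\mathcal{A}_p\)-bound is insensitive to both choices.
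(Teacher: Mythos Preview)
Your proof is correct and is precisely the standard argument the paper has in mind; the paper explicitly omits the proof of this proposition (along with Propositions~\ref{prop:pl0} and~\ref{prop:pl1}) as standard, so there is no alternative argument to compare against.
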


\begin{clm} \label{clm:opennessproof}
Suppose that $\overline{D}_n$ for $n = 0, \dots, \infty$ are a family of closed discs of radius $R > 0$ for which there exists some $N$ such that $\overline{D}_n$ are disjoint for $n > N$.
Then the domain
\begin{equation} \Omega = \mathbb{C} \setminus \bigcup_{n = 0}^\infty \overline{D}_n \end{equation}
is an open set.
\end{clm}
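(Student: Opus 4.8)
The plan is to verify that $\Omega$ is open directly from the definition, the only nontrivial ingredient being that the hypothesis forces the family $\{\overline{D}_n\}$ to be \emph{locally finite}: each bounded subset of $\mathbb{C}$ meets only finitely many of the discs.

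First I would establish this local finiteness. Write $c_n$ for the center of $\overline{D}_n$, so $\overline{D}_n = \{\lambda : |\lambda - c_n| \le R\}$. Fix a bounded set, say the closed disc $\overline{K}$ of radius $\rho$ about a point $w$, and suppose $\overline{D}_n \cap \overline{K} \ne \emptyset$. The indices $n \le N$ contribute at most $N+1$ such discs. For an index $n > N$ with $\overline{D}_n \cap \overline{K} \ne \emptyset$ we have $\overline{D}_n \subset \overline{B}(w, \rho + R)$; since the discs $\overline{D}_n$ with $n > N$ are pairwise disjoint, a comparison of Lebesgue measures (each such $\overline{D}_n$ has area $\pi R^2$, and they lie disjointly inside a disc of area $\pi(\rho+R)^2$) shows there are at most $(\rho+R)^2/R^2$ of them. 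Hence only finitely many discs meet $\overline{K}$.

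Next I would run the standard argument. Let $z_0 \in \Omega$, so $z_0 \notin \overline{D}_n$ for every $n$. Apply the previous paragraph with $\overline{K} = \overline{B}(z_0,1)$ to obtain a finite list $\overline{D}_{n_1},\dots,\overline{D}_{n_k}$ of the discs that meet $B(z_0,1)$. Each $\overline{D}_{n_j}$ is closed and does not contain $z_0$, so there is $\delta_j > 0$ with $B(z_0,\delta_j)\cap \overline{D}_{n_j} = \emptyset$; put $\delta = \min\{1,\delta_1,\dots,\delta_k\}>0$. Then $B(z_0,\delta)$ is disjoint from each $\overline{D}_{n_j}$, and since $B(z_0,\delta) \subset B(z_0,1)$ it is automatically disjoint from every $\overline{D}_n$ with $n \notin \{n_1,\dots,n_k\}$ as well. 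Therefore $B(z_0,\delta) \subset \Omega$, and $\Omega$ is open.

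The main (indeed essentially the only) obstacle is the local finiteness claim; without the eventual-disjointness hypothesis an infinite union of closed discs can easily fail to be closed (for instance discs of radius $R$ whose centers accumulate at a point), so that hypothesis must be used, and it enters precisely through the packing/area estimate above.
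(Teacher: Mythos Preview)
Your argument is correct and rests on the same idea as the paper's: only finitely many of the eventually-disjoint equal-radius discs can cluster in any bounded region. The paper presents this by contradiction (if some $\lambda\in\Omega$ had no neighborhood in $\Omega$, one would extract infinitely many disjoint discs of radius $R$ whose centers lie in a compact set, which is impossible), whereas you prove local finiteness directly with an area/packing count and then give the standard direct argument that the complement of a locally finite union of closed sets is open. The substance is the same; your packaging is slightly more explicit.

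One harmless arithmetic slip: from $\overline{D}_n\cap\overline{B}(w,\rho)\ne\emptyset$ you can conclude $|c_n-w|\le\rho+R$, hence $\overline{D}_n\subset\overline{B}(w,\rho+2R)$ rather than $\overline{B}(w,\rho+R)$. The area comparison then yields at most $(\rho+2R)^2/R^2$ such discs with index $n>N$, and everything after that goes through unchanged.
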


\begin{proof}
Since there are only a finite number of overlaps of $\overline{D}_n$ and a finite intersection of open sets is open, it suffices to prove that the domain $\Omega = \mathbb{C} \setminus \bigcup_{n = 0}^\infty \overline{D}_n$ is open when $\overline{D}_n$ are disjoint.

We suppose to the contrary that $\Omega$ is not open.  Then there is a $\lambda \in \Omega$ such that there is no neighborhood of $\lambda$ contained in $\Omega$.  This means that for each small $\epsilon_{0} > 0$, there would exist a disc $D_{n(\epsilon_{0})}$ such that $\lambda \notin \overline{D_{n(\epsilon_{0})}}$, and $\mbox{dist}(\lambda, \overline{D_{n(\epsilon_{0})}}) < \epsilon$.  Choosing $\epsilon_{1} < \mbox{dist}(\lambda, \overline{D_{n(\epsilon_{0})}})$, we obtain a second disc ${D_{n(\epsilon_{1})}}$, such that $\lambda \notin \overline{D_{n(\epsilon_{1})}}$, and $\mbox{dist}(\lambda, \overline{D_{n(\epsilon_{1})}}) < \epsilon_{1}$.  Therefore, there exists an infinite sequence of disjoint discs with fixed radius, all within a compact set, since the centers of all these discs are within distance $R + \epsilon_{0}$ of $\lambda$.  This is a contradiction.  Therefore, the set $\Omega$ must be open.  

\end{proof}

While the above properties of these algebras of holomorphic functions are standard, the following two propositions are (to the best of our knowledge) new, and useful for dealing with Bloch--Floquet solutions of Hill's equation and associated infinite-product expansions.

\begin{prop} \label{prop:pl3}
Suppose that $\Omega \subset \mathbb{C}$ is some domain and let $\overline{D}_n \subset \Omega$ for $n = 1, 2, \dots , \mathcal{N}$ ($\mathcal{N}$ is either finite or infinite) be a family of discs with radii $R > 0$, and when $\mathcal{N}$ is infinite we assume there exists $N$ such that $\overline{D}_n$ are disjoint for $n > N$. Then
\begin{equation} \mathcal{A}(\Omega) \cap \mathcal{A}_p \left( \Omega \setminus \bigcup_{n = 1}^\mathcal{N} \overline{D}_n\right) \subset \mathcal{A}_p (\Omega). \end{equation}
\end{prop}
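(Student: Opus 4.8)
The plan is to reduce everything to the maximum modulus principle applied disc by disc. Since each $\overline{D}_n$ is a compact subset of the open set $\Omega$ on which $f$ is holomorphic, the modulus of $f$ on $\overline{D}_n$ is governed by its modulus on the circle $\partial D_n$, and the point of the argument is that $\partial D_n$ lies (up to a harmless exceptional set) in the closure of the region $\Omega\setminus\bigcup_m\overline{D}_m$ where the $\mathcal{A}_p$ bound is assumed. First I would record that $\Omega\setminus\bigcup_{n}\overline{D}_n$ is indeed open, so that the hypothesis makes sense: for $n>N$ the closed discs are disjoint with common radius $R$, hence $\bigcup_{n>N}\overline{D}_n$ is closed by the compactness argument used in the proof of Claim \ref{clm:opennessproof}, and adjoining the finitely many closed discs $\overline{D}_1,\dots,\overline{D}_N$ keeps it closed.

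Next I would dispose of the finitely many possibly overlapping discs. Their union $K=\bigcup_{n=1}^{N}\overline{D}_n$ is compact and contained in $\Omega$, so $f$ is continuous on $K$ and there is a constant $M_0$ with $|f(\lambda)|\le M_0$ for $\lambda\in K$; since $e^{c'|\lambda|^p}\ge 1$ for any $c'>0$, this already gives a bound of the desired shape on $K$.

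The core step handles a disc $\overline{D}_n$ with $n>N$. By the maximum modulus principle, $\max_{\overline{D}_n}|f|=\max_{\partial D_n}|f|$, so I only need to bound $|f|$ on $\partial D_n$. If $z\in\partial D_n$ and $z\notin K$, then $z$ lies in $\overline{D}_n$ but in no other $\overline{D}_m$ (in $K$ by assumption, and not in $\overline{D}_m$ for $m>N$, $m\neq n$, by disjointness), and $z\notin D_n$; since $\Omega$ is open and each of the finitely many ``distances to other closed discs'' and $\mathrm{dist}(z,K)$ is positive, $z$ is a limit of points of $\Omega\setminus\bigcup_m\overline{D}_m$, and continuity of $f$ together with the hypothesis $f\in\mathcal{A}_p\!\left(\Omega\setminus\bigcup_m\overline{D}_m\right)$ yields $|f(z)|\le M e^{c|z|^p}$. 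If instead $z\in K$ we simply use $|f(z)|\le M_0$. Hence $\max_{\partial D_n}|f|\le M_0+M\,\max_{z\in\partial D_n}e^{c|z|^p}$. Now for any $z\in\partial D_n$ and any $\lambda\in\overline{D}_n$ one has $|z|\le|\lambda|+2R$, and $(|\lambda|+2R)^p\le 2^p|\lambda|^p+(4R)^p$, so $M e^{c|z|^p}\le M'e^{c'|\lambda|^p}$ with $c'=2^pc$ and $M'=M e^{c(4R)^p}$, both independent of $n$. Combining, $|f(\lambda)|\le (M_0+M')e^{c'|\lambda|^p}$ for all $\lambda\in\overline{D}_n$ with $n>N$.

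Finally I would assemble the three regimes: on $\Omega\setminus\bigcup_m\overline{D}_m$ the bound $|f(\lambda)|\le M e^{c|\lambda|^p}\le M e^{c'|\lambda|^p}$ holds since $c'=2^pc\ge c$; on $K$ it holds with constant $M_0$; and on the tail discs it holds with constants $M_0+M'$ and $c'$. Taking $\tilde M=M+M_0+M'$ gives $|f(\lambda)|\le \tilde M e^{c'|\lambda|^p}$ for all $\lambda\in\Omega$, i.e. $f\in\mathcal{A}_p(\Omega)$. I expect the only genuinely fiddly point to be the bookkeeping on $\partial D_n$ for tail discs whose boundary circle happens to meet the finite ``bad'' set $K$ — this is precisely why the compact bound $M_0$ is carried along as an additive term throughout — while the rest is just the maximum modulus principle and the elementary inequality $(a+b)^p\le 2^p a^p+(2b)^p$ needed to absorb the fixed radius $R$ into the exponential.
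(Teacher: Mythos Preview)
Your argument is correct and follows essentially the same route as the paper: handle a finite exceptional set by compactness, then apply the maximum modulus principle disc by disc on the tail and absorb the fixed radius $2R$ into the exponential via the triangle inequality. The only cosmetic difference is that the paper splits according to $|\lambda|\le K$ versus $|\lambda|>K$ while you split according to $n\le N$ versus $n>N$, and the paper uses the multiplicative estimate $(|\lambda|+2R)^p\le(1+2R/K)^p|\lambda|^p$ in place of your additive inequality; one small slip worth tightening is the phrase ``finitely many distances to other closed discs'' --- for $n>N$ there are infinitely many, but the point you need (and already have, via Claim~\ref{clm:opennessproof}) is that $\bigcup_{m>N,\,m\neq n}\overline{D}_m$ is closed, so $z\in\partial D_n\setminus K$ has a neighborhood meeting no $\overline{D}_m$ with $m\neq n$.
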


\begin{proof}[Proof of Proposition \ref{prop:pl3}]
Suppose that
\begin{equation} f \in \mathcal{A}(\Omega) \cap \mathcal{A}_p \left( \Omega \setminus \bigcup_{n = 1}^\mathcal{N} \overline{D}_n \right). \end{equation}
Then there exist constants $c,M > 0$ such that $|f(\lambda)| \le M e^{c |\lambda|^p}$ for $\lambda \in \Omega \setminus \bigcup \overline{D}_n$.
We will prove that $f$ must then satisfy the slightly weaker bound $|f(\lambda)| \le M' e^{c' |\lambda|^p}$ for all $\lambda \in \Omega$ in terms of
\begin{equation}
c' = (1 + R)^p c, \text{ and } M' = \max(\{M\} \cup \{|f(\lambda)| : \lambda \in \bigcup_{n = 1}^\mathcal{N} \overline{D}_n, |\lambda| \le K \})
\end{equation}
where $K \ge 2$ is a cut off such that
\begin{equation} \bigcup_{n = 0}^N \overline{D}_n \subset \{ \lambda \in \mathbb{C}, |\lambda| \le K\}. \end{equation}
This implies $f \in \mathcal{A}_p (\Omega)$.
The bound $|f(\lambda)| \le M' e^{c' |\lambda|^p}$ holds outside of the discs $\overline{D}_n$ so we just need to extend the bound into the discs $\overline{D}_n$.

The domain $\Omega \subset \mathbb{C}$ is arbitrary so it can potentially contain both large and small values of $\lambda$.
The argument used to obtain the bound $|f(\lambda)| \le M' e^{c' |\lambda|^p}$ for large $\lambda$ requires an upper bound on $(1 + 2 R|\lambda|^{-1})$ and require $\overline{D}_n$ to be disjoint and so does no work for small values of $\lambda$.
We therefore impose the cut off $K$ and handle the cases $|\lambda| \ge K$ and $|\lambda| < K$ separately.
Suppose that $\{\lambda \in \bigcup_{n = 1}^{\mathcal{N}} \overline{D}_n, |\lambda| \le K\}$ is nonempty.
Then $\{\lambda \in \bigcup_{n = 1}^{\mathcal{N}} \overline{D}_n, |\lambda| \le K\}$ is compact so
\begin{equation} 0 < \max\{|f(\lambda)| : \lambda \in \bigcup_{n = 1}^{\mathcal{N}} \overline{D}_n, |\lambda| \le K \} < \infty. \end{equation}
Therefore, $M' < \infty$ and $|f(\lambda)| \le M' \le M' e^{c |\lambda|^p}$ for $\lambda \in \{\lambda \in \bigcup_{n = 1}^{\mathcal{N}} D_n,\lambda \le K\}$. 

Now suppose that $\{\lambda \in D_n, |\lambda| \ge K\}$ is nonempty and consider $\lambda \in D_n$ with $|\lambda| \ge K$.
$f(\lambda)$ is holomorphic for $\lambda \in D_n$, so $|f(\lambda)|$ is bounded for $\lambda \in D_n$ by the maximum of $|f(\lambda)|$ for $\lambda \in \partial D_n$ by the maximum principal.
Since $D_n$ is disjoint from $D_{n'}$ for any $n' \ne n$, the bound $f(\lambda) \le M e^{c|\lambda|^p}$ extends to $\partial D_n$ by continuity.
Let $c_n$ be the center of $D_n$, then there exists a unit complex number $\nu$ such that
\begin{equation}|f(\lambda)| \le M' e^{c |c_n + \nu R|^p}\end{equation}
for $\lambda \in D_n$ since $M' e^{cr^p}$ is an increasing function of $r$.
The triangle inequality gives
\begin{equation}|f(\lambda)| \le M' e^{c |c_n + R|^p} \le M' e^{c (|c_n| + R)^p}\end{equation}
for $\lambda \in D_n$.
Moreover,
\begin{equation}|c_n| = |z + c_n - z| \le |z| + |c_n - z| \le |z| + R \end{equation}
for $\lambda \in D_n$ by the triangle inequality, and so
\begin{equation}|f(\lambda)| \le M' e^{c (|\lambda| + 2R)^p} = M' e^{c (1 + 2 R |\lambda|^{-1})^p |\lambda|^p} \le M' e^{c (1 + 2R/K)^p |\lambda|^p}\end{equation}
for $\lambda \in \{\lambda \in D_n : |\lambda| \ge K\}$.
\\ \end{proof}

\begin{prop} \label{prop:pl4}
Suppose that $\{\eta_n \}_{n = 1}^\infty$ is a sequence of nonzero complex numbers with $|\eta_n|$ nondecreasing for which there exists $N,C > 0$ such that $|\eta_n| \ge C n^2$ for $n \ge N$.
Let $D_n$ for $n = 0, 1, \cdots , \infty$ be a family of open discs of radii $R > 0$ centered at $c_n$ with $c_0 = 0$,  such that $\eta_n \in \{ \lambda \in \mathbb{C} : |\lambda - c_n| \le R/2 \}$ for $n > 0$.
Moreover, assume that there exists some $\tilde N$ such that $\overline{D}_n$ are disjoint for $n > \tilde N$.
Consider the canonical product
\begin{equation} P(\lambda) = P(\lambda) = \lambda^m \prod_{n = 1}^\infty \left(1 - \frac{\lambda}{\eta_n}\right)\end{equation}
for $m \ge 0$. Then
\begin{equation} P^{-1} \in \mathcal{A}_1 \left( \mathbb{C} \setminus \bigcup_{n = 0}^\infty \overline{D}_n\right) \end{equation}
($P^{-1}$ denotes the multiplicative inverse of $P$ rather than its inverse function).
\end{prop}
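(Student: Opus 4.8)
The plan is to deduce the claim from a lower bound of the shape $|P(\lambda)| \ge \varepsilon\, e^{-c|\lambda|}$ valid on $\Omega := \mathbb{C}\setminus\bigcup_{n\ge 0}\overline{D}_n$. First I would record that $P$ really is an entire function: since $|\eta_n|\ge Cn^2$ for $n\ge N$ we have $\sum_n|\eta_n|^{-1}<\infty$, so the genus-zero canonical product converges locally uniformly. Its zero set is $\{0\}\cup\{\eta_n\}_{n\ge1}$, and since $0=c_0$ while $|\eta_n-c_n|\le R/2<R$, every zero lies in one of the \emph{open} discs $D_n$. By Claim \ref{clm:opennessproof} the set $\Omega$ is open, and each open $D_n$ is disjoint from $\Omega$, hence from $\overline{\Omega}$; therefore $\overline{\Omega}\subset\mathbb{C}\setminus\bigcup_n D_n$ and $P$ is zero-free on $\overline{\Omega}$. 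In particular $P^{-1}\in\mathcal{A}(\Omega)$, and only the growth bound remains.

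Next I would dispose of bounded $\lambda$. Fix a cutoff $K\ge 1$, to be enlarged later. The set $\overline{\Omega}\cap\{|\lambda|\le K\}$ is compact and $|P|$ is continuous and strictly positive on it, so $|P^{-1}|\le 1/\varepsilon_0$ on $\Omega\cap\{|\lambda|\le K\}$ for some $\varepsilon_0>0$; since $e^{c|\lambda|}\ge 1$ this is already the desired bound there. For $\lambda\in\Omega$ with $r:=|\lambda|\ge K$ I would split the product according to whether $|\eta_n|>2r$ or $|\eta_n|\le 2r$, writing
\begin{equation*}
\log|P(\lambda)| = m\log r + \sum_{|\eta_n|>2r}\log\bigl|1-\tfrac{\lambda}{\eta_n}\bigr| + \sum_{|\eta_n|\le 2r}\log\bigl|1-\tfrac{\lambda}{\eta_n}\bigr|,
\end{equation*}
where $m\log r\ge 0$ may be discarded.

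For the ``far'' factors ($|\eta_n|>2r$) I would use $|1-\lambda/\eta_n|\ge 1-r/|\eta_n|\ge \tfrac12$ together with the elementary inequality $\log(1-t)\ge -2t$ for $t\in[0,\tfrac12]$, giving $\sum_{|\eta_n|>2r}\log|1-\lambda/\eta_n|\ge -2r\sum_n|\eta_n|^{-1} = -C_1 r$. The ``near'' factors are the heart of the matter and the main obstacle. They are finitely many, and the hypothesis $|\eta_n|\ge Cn^2$ (valid for $n\ge N$, with at most $N-1$ further exceptional indices) forces their number to be at most $\sqrt{2r/C}+N = O(\sqrt r)$. For each such $n$, the fact that $\lambda\notin\overline{D}_n$ gives $|\lambda-c_n|>R$, hence $|\lambda-\eta_n|\ge |\lambda-c_n|-|\eta_n-c_n|>R/2$, so $|1-\lambda/\eta_n| = |\lambda-\eta_n|/|\eta_n|\ge R/(2|\eta_n|)\ge R/(4r)$. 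Summing, $\sum_{|\eta_n|\le 2r}\log|1-\lambda/\eta_n|\ge -O(\sqrt r)\log(4r/R) = -O(\sqrt r\log r)$.

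Adding the contributions yields $\log|P(\lambda)|\ge -C_1 r - O(\sqrt r\log r)$; since $\sqrt r\log r = o(r)$, after possibly enlarging $K$ this is $\ge -c\,r$ for a suitable constant $c$, so $|P^{-1}(\lambda)|\le e^{c|\lambda|}$ on $\Omega\cap\{|\lambda|\ge K\}$. Combining with the bounded-$\lambda$ estimate gives $|P^{-1}(\lambda)|\le \max(1/\varepsilon_0,1)\,e^{c|\lambda|}$ on all of $\Omega$, i.e. $P^{-1}\in\mathcal{A}_1(\Omega)$. The only bookkeeping subtlety is the handful of exceptional indices $n<N$ where no $n^2$ lower bound is assumed: these add only finitely many extra ``near'' factors, each controlled exactly by the distance-to-disc estimate above, so they affect none of the asymptotics. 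I do not expect the quadratic growth to be used sharply here — it only needs to make the ``near'' count $o(r)$, which is why the crude exponent $p=1$ suffices.
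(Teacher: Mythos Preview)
Your proof is correct and follows essentially the same approach as the paper's: both split the product at the threshold $|\eta_n|\lessgtr 2|\lambda|$, bound the number of ``near'' factors by $O(\sqrt{|\lambda|})$ via the quadratic growth hypothesis (the paper isolates this as Claim~\ref{claim:pl4}), use the distance-to-disc estimate $|\lambda-\eta_n|\ge R/2$ for those factors, and control the ``far'' tail by the absolute convergence of $\sum|\eta_n|^{-1}$. Your treatment of bounded $\lambda$ by compactness of $\overline{\Omega}\cap\{|\lambda|\le K\}$ is slightly cleaner than the paper's, and you use the elementary inequality $\log(1-t)\ge -2t$ where the paper expands the logarithm as a series, but these are stylistic rather than substantive differences.
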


The condition that $\overline{D}_n$ are disjoint for large $n$ is only used to guarantee that
$\mathbb{C} \setminus \bigcup_{n = 0}^\infty \overline{D}_n$ is an open set.
This proposition is true so long as this set is open.
The proof of this proposition relies on the following claim.

\begin{clm} \label{claim:pl4} Suppose that $\{\eta_n \}_{n = 1}^\infty$ satisfies the hypotheses on the sequence in proposition \ref{prop:pl4}.
Let $\mathfrak{N}(\lambda) \ge 0$ be the smallest integer such that $|\eta_n| > 2 |\lambda|$ for $n > \mathfrak{N}(\lambda)$.
Then $\mathfrak{N}(\lambda) \le D \sqrt{|\lambda|}$ where
\begin{equation} D = \max \left\{ N \sqrt{\frac{2}{|\eta_1|}} , \sqrt{\frac{2}{C}} \right\}. \end{equation} \end{clm}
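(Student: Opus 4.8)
The plan is to reinterpret $\mathfrak{N}(\lambda)$ as a counting function and then split into two regimes according to whether $\mathfrak{N}(\lambda)$ is below or above the threshold $N$ appearing in the hypothesis on $\{\eta_n\}$. First I would observe that, since $|\eta_n| \ge Cn^2 \to \infty$, for every finite $\lambda$ only finitely many $n$ satisfy $|\eta_n| \le 2|\lambda|$, so $\mathfrak{N}(\lambda)$ is a well-defined finite nonnegative integer. Because $|\eta_n|$ is nondecreasing, the set $\{n \ge 1 : |\eta_n| > 2|\lambda|\}$ is upward closed, hence of the form $\{n : n > k\}$ for a unique $k = \mathfrak{N}(\lambda) \ge 0$; equivalently, $\mathfrak{N}(\lambda) = \bigl|\{n \ge 1 : |\eta_n| \le 2|\lambda|\}\bigr|$. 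In particular, writing $k = \mathfrak{N}(\lambda)$, if $k \ge 1$ then $|\eta_k| \le 2|\lambda|$, and a fortiori $|\eta_1| \le |\eta_k| \le 2|\lambda|$.

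Next I would dispose of the cases. If $k = 0$ the inequality $0 \le D\sqrt{|\lambda|}$ is trivial, so assume $k \ge 1$. In the regime $1 \le k < N$: from $|\eta_1| \le 2|\lambda|$ we get $\sqrt{|\lambda|} \ge \sqrt{|\eta_1|/2}$, i.e. $1 \le \sqrt{2/|\eta_1|}\,\sqrt{|\lambda|}$, and therefore $k < N \le N\sqrt{2/|\eta_1|}\,\sqrt{|\lambda|} \le D\sqrt{|\lambda|}$. In the regime $k \ge N$: the hypothesis $|\eta_n| \ge Cn^2$ for $n \ge N$ applies with $n = k$, so $Ck^2 \le |\eta_k| \le 2|\lambda|$, which gives $k \le \sqrt{2/C}\,\sqrt{|\lambda|} \le D\sqrt{|\lambda|}$. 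Since $D = \max\{N\sqrt{2/|\eta_1|},\sqrt{2/C}\}$ dominates the constant in each regime, combining the cases yields $\mathfrak{N}(\lambda) \le D\sqrt{|\lambda|}$ in all cases.

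There is essentially no hard step: the estimate is elementary. The only points requiring care are (i) reading off that $\mathfrak{N}(\lambda)$ counts exactly the indices with $|\eta_n| \le 2|\lambda|$, which is precisely where monotonicity of $|\eta_n|$ is used, and (ii) organizing the split at $k = N$ and the boundary case $k = 0$ so that a single constant $D$ works simultaneously for small $k$ (controlled crudely by $|\eta_1| \le 2|\lambda|$) and for large $k$ (controlled by the quadratic lower bound $|\eta_n| \ge Cn^2$). One may also note that the bound is sharp in order: for the model case $|\eta_n| = Cn^2$ the condition $|\eta_n| \le 2|\lambda|$ forces $n \le \sqrt{2|\lambda|/C}$, matching the $\sqrt{|\lambda|}$ growth of the stated bound.
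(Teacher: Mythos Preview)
Your proof is correct and follows essentially the same approach as the paper's. The only cosmetic difference is that you split cases according to the value of $k = \mathfrak{N}(\lambda)$ (namely $k=0$, $1 \le k < N$, $k \ge N$), whereas the paper splits according to the value of $2|\lambda|$ relative to $|\eta_1|$ and $|\eta_N|$; the underlying inequalities ($|\eta_1| \le 2|\lambda|$ in the small-$k$ regime and $Ck^2 \le |\eta_k| \le 2|\lambda|$ in the large-$k$ regime) are identical.
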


\begin{proof}[Proof of Claim \ref{claim:pl4}] 
The argument for this claim is different for $2 |\lambda| < |\eta_1|$, $|\eta_1| \le 2 |\lambda| \le |\eta_N|$ and $|\eta_N| < 2 |\lambda|$.
The case of $2 |\lambda| < |\eta_1|$ is the easiest because then $|\eta_n| > 2 |\lambda|$ for all $n > 0$ and so $\mathfrak{N}(\lambda) = 0 \le D \sqrt{|\lambda|}$.
We now consider the case of $|\eta_1| \le 2 |\lambda| \le |\eta_N|$.
$\mathfrak{N}(\lambda) \le N$ for all $\lambda$ such that $2 |\lambda| < |\eta_N|$.
Therefore
\begin{equation} \mathfrak{N}(\lambda) \le \frac{N}{\sqrt{|\eta_1|}} \sqrt{|\eta_1|} \le N \sqrt{\frac{2}{|\eta_1|}}  \sqrt{|\lambda|} \end{equation}
for $|\eta_1| \le 2 |\lambda| \le |\eta_N|$.

We now consider the case when $2 |\lambda| \ge |\eta_N|$.
We will use two inequalities: $C \mathfrak{N}(\lambda)^2 \le |\eta_{\mathfrak{N}(\lambda)}|$ and $|\eta_{\mathfrak{N}(\lambda)}| \le 2 |\lambda|$. 
The first is verified by noting that $\mathfrak{N}(\lambda) \ge N$ and therefore $C \mathfrak{N}(\lambda)^2 \le |\eta_{\mathfrak{N}(\lambda)}|$ by the hypotheses on $\{\eta_n\}_{n = 1}^\infty$ in proposition \ref{prop:pl4}.
If the second inequality were not true then $M = \mathfrak{N}(\lambda)-1 < \mathfrak{N}(\lambda)$ would be an integer such that $|\eta_n| > 2 |\lambda|$ for $n > M$, however this contradicts the fact that $\mathfrak{N}(\lambda)$ was defined to be the smallest such integer.
Therefore the second inequality is true.
Combining these two inequalities yields $\mathfrak{N}(\lambda) \le \sqrt{\tfrac{2}{C}} \sqrt{|\lambda|}$. \end{proof}

\begin{proof}[Proof of Proposition \ref{prop:pl4}]
Suppose that $\lambda \in \mathbb{C} \setminus \bigcup D_n$.
The canonical product breaks up as ${P(\lambda) = \lambda^m P_1(\lambda) P_2(\lambda)}$ where
\begin{equation}P_1(\lambda) = \prod_{n \le \mathfrak{N}(\lambda)} \left(1 - \frac{\lambda}{\eta_n}\right)\end{equation}
and
\begin{equation}P_2(\lambda) = \prod_{n > \mathfrak{N}(\lambda)} \left(1 - \frac{\lambda}{\eta_n} \right).\end{equation}
$\lambda^{-m}$ clearly satisfies $|\lambda^{-m}| \le R^{-m} e^{\sqrt{|\lambda|}}$ for $\lambda \in \mathbb{C} \setminus D_0$ so $\lambda^{-m} \in \mathcal{A}_1(\mathbb{C} \setminus D_0)$.
By the reverse triangle inequality
\begin{equation} |\lambda - \eta_n| = |\lambda - c_n - \eta_n  + c_n| \ge ||\lambda - c_n| - |\eta_n - c_n||.\end{equation}
Moreover, combining $|\lambda - c_n| \ge R$ and $|\eta_n - c_n| \le R/2$ implies $|\lambda - c_n| - |\eta_n - c_n| > 0$, and so $|\eta_n - \lambda| \ge R/2$.
We can bound $P_1(\lambda)$ from below by
\begin{equation}|P_1(\lambda)| = \prod_{n = 1}^{\mathfrak{N}(\lambda)} \frac{1}{|\eta_n|}{|\eta_n - \lambda|} \ge \prod_{n = 1}^{\mathfrak{N}(\lambda)} \frac{R}{2 |\eta_n|} \ge R^{\mathfrak{N}(\lambda)} (2 |\eta_{\mathfrak{N}(\lambda)}|)^{-\mathfrak{N}(\lambda)}.\end{equation}
We can then bound $|P_1(\lambda)|^{-1}$ from above by
\begin{align} |P_1(\lambda)|^{-1} & \le (2 \eta_{\mathfrak{N}(\lambda)} R^{-1})^{\mathfrak{N}(\lambda)} \le (4 R^{-1} |\lambda|)^{\mathfrak{N}(\lambda)} \\ & \le e^{D \sqrt{|\lambda|} (\log(4 R^{-1}) + \log(|\lambda|)) } \le M e^{c |\lambda|}\end{align}
for some positive constants $c$ and $M$.

The set $A := \{\lambda \in \mathbb{C} : \lambda \le |\eta_N|\} \setminus \bigcup  D_n$ is a compact set on which $P_2(\lambda)^{-1}$ is piecewise polynomial with a finite number of domains of definition.
Therefore, there exists some $M \ge 1$ such that $|P_2(\lambda)|^{-1} \le M$ for all $\lambda \in A$.
Let us pick $\lambda \in \mathbb{C} \setminus \bigcup  D_n$ such that $\lambda > |\eta_N|$ so that $\mathfrak{N}(\lambda) \ge N$.
Taking the principal branch of the logarithm gives
\begin{equation} \log(P_2(\lambda)) = \sum_{n > \mathfrak{N}(\lambda)} \log \left( 1 - \frac{\lambda}{\eta_n} \right) = \sum_{n > \mathfrak{N}(\lambda)} \sum_{m = 1}^\infty -\frac{\lambda^m}{m \eta_n^{m}},\end{equation}
where the series expansion of the logarithm converges since $|\eta_n| > 2 |\lambda|$ implies $\frac{|\lambda|}{|\eta_n|} \le \frac{1}{2}$.
The trivial bound $\frac{1}{m} < 1$ and the geometric series gives
\begin{align*} \numberthis |\log(P_2(\lambda))| & \le \sum_{n > \mathfrak{N}(\lambda)} \sum_{m = 1}^\infty \frac{|\lambda|^{m}}{|\eta_n|^{m}} = \sum_{n > \mathfrak{N}(\lambda)} \frac{|\lambda|}{|\eta_n|} \left( 1 - \frac{|\lambda|}{|\eta_n|} \right)^{-1} \\ & \le 2 |\lambda| \sum_{n = \mathfrak{N}(\lambda)}^\infty \frac{1}{|\eta_n|} \le \frac{2}{C}|\lambda| \sum_{m = 1}^\infty n^{-2} = \frac{\pi^2}{3 C} |\lambda|. \end{align*}
This gives the lower bound
\begin{equation}|P_2(\lambda)| = e^{\text{Re}(\log(P_2(\lambda)))} \ge e^{-\frac{\pi^2}{3 C} |\lambda|}\end{equation}
which implies $|P_2(\lambda)|^{-1} \le e^{\frac{\pi^2}{3 C} |\lambda|}$ holds for all $\lambda \in \mathbb{C} \setminus \bigcup  D_n$ such that $\mathfrak{N}(\lambda) \ge N$.
Therefore, the bound $|P_2(\lambda)|^{-1} \le M e^{\frac{\pi^2}{3C} |\lambda|}$ holds for all $z \in \mathbb{C} \setminus \bigcup  D_n$.
Finally, $\lambda^{-m},P_1^{-1}, P_2^{-1}$ all satisfy bounds of the form $|\lambda^{-m}|\le M e^{c|\lambda|}$, $|P_1(\lambda)^{-1}| \le M e^{c|\lambda|}$, $|P_2(\lambda)| \le M e^{c|\lambda|}$, and therefore so does $P(\lambda)^{-1} = \lambda^{-m}P_1(\lambda)^{-1}P_2(\lambda)^{-1}$. Moreover, $P(\lambda)^{-1}$ is holomorphic in $\mathbb{C} \setminus \bigcup \overline{D}_n$, so
\begin{equation} P^{-1} \in \mathcal{A}_1 \left( \mathbb{C} \setminus \bigcup \overline{D}_n \right). \end{equation} \end{proof}

\section{Inverse Spectral Theory for Periodic Potentials}

In this section we will produce a Riemann--Hilbert problem from which the periodic potential $u \in L^\infty (\mathbb{R})$ can be recovered from its solutions.
We will characterize the solutions as unique up to left multiplication by $2 \times 2$ lower triangular matrices with ones on the diagonal.

To describe our Riemann--Hilbert problem we need a notation for boundary values.
\begin{defn}
Let $\Gamma \subset \mathbb{R}$ be some contour with a possibly infinite number of components and let ${f : \mathbb{C} \setminus \Gamma \to \mathbb{C}}$ be a holomorphic function.
We define the boundary values $f_\pm(\lambda)$ for some $\lambda \in \Gamma$ by
\begin{equation} f_\pm (\lambda) = \lim_{\epsilon \to 0^+} f(\lambda \pm i \epsilon) \end{equation}
provided the limits exist.
\end{defn}

In defining our Riemann--Hilbert problems we include a subexponential bound that does not extend into some closed discs $\overline{D}_k$ that contain the spectral gaps.

\begin{defn} \label{def:discs}
Suppose $\lambda_n$ are defined as in Theorem \ref{thm:forspec} and $E_k$ are defined as in  definition \ref{def:SpecData}.
Let $R = \max \{E_{2k} - E_{2k-1}\}_{k = 1}^\mathcal{G}$.
We define $D_n$ to be the open disc of radius $R$ centered at $c_n := (\lambda_{2n}+\lambda_{2n-1})/2$ for $n > 0$ and $c_0 := 0$,
and
\begin{equation} \label{eq:defD}
\mathcal{D} := \mathbb{C} \setminus \left(\mathbb{R}^+ \cup \bigcup_{k = 0}^{\mathcal{G}} \overline{D}_{n_k} \right)
\end{equation}
where in (\ref{eq:defD}) we have added the subindex $n_0 = 0$ to the subindices $n_k$.

\end{defn}

The upshot of definition \ref{def:discs} is that: the subsequence of discs $D_{n_k}$ are defined in terms of data included in $\Sigma (q)$ alone, and the full sequence of discs $D_n$ have radii $R \ge (\lambda_{2n} - \lambda_{2n-1})$ and so
\begin{equation} \mu_n \in [\lambda_{2n-1},\lambda_{2n}] \subset \{\lambda \in \mathbb{C} : |\lambda - c_n| \le R/2\}. \end{equation}
Moreover, there exists $C$ and $N$ such that $\mu_n, \lambda_{2n-1}, \lambda_{2n} \ge C n^2$ by part 4 of Theorem \ref{thm:forspec}.
Therefore, the following claim is valid:
\begin{clm} \label{prop:pl4hyp} The discs $D_n$ together with $\{\eta_n\}_{n = 1}^\infty$ satisfy the hypotheses of proposition \ref{prop:pl4} for each of the following choices for $\eta_n$: $\eta_n = \lambda_{2n-1}$, $\eta_n = \lambda_{2n}$ or $\eta_n = \mu_n$. \end{clm}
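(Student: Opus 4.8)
The statement is a bookkeeping verification: for each of the three choices $\eta_n \in \{\lambda_{2n-1}, \lambda_{2n}, \mu_n\}$ one must check the four hypotheses of Proposition~\ref{prop:pl4}, namely (i) that the $\eta_n$ are nonzero with $|\eta_n|$ nondecreasing, (ii) that $|\eta_n| \ge C n^2$ for $n \ge N$ and some $C, N > 0$, (iii) that the $D_n$ are open discs of a common radius $R > 0$ centered at $c_n$ with $c_0 = 0$ and $\eta_n \in \{|\lambda - c_n| \le R/2\}$ for $n > 0$, and (iv) that the closures $\overline{D}_n$ are eventually pairwise disjoint. All of the necessary input is already in hand: the ordering (\ref{eq:perord}), parts 3 and 4 of Theorem~\ref{thm:forspec}, and Definition~\ref{def:discs}. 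The plan is to extract each of (i)--(iv) from these. First I would observe that, since $\lambda_0 = E_0 = 0$ and (\ref{eq:perord}) gives $\lambda_0 < \lambda_1$, every $\lambda_n$ with $n \ge 1$ is strictly positive, and then by part 3 each $\mu_n \in [\lambda_{2n-1}, \lambda_{2n}]$ is strictly positive as well; hence all three candidate sequences consist of nonzero (positive real) numbers.

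For the monotonicity in (i), the chain (\ref{eq:perord}) gives $\lambda_{2n-1} \le \lambda_{2n} < \lambda_{2n+1}$ and $\lambda_{2n} < \lambda_{2n+1} \le \lambda_{2n+2}$, so $\{\lambda_{2n-1}\}_{n\ge 1}$ and $\{\lambda_{2n}\}_{n\ge 1}$ are strictly increasing, while part 3 states that $\{\mu_n\}_{n\ge1}$ is increasing. For (ii), part 4 gives $\lambda_{2n-1}, \lambda_{2n}, \mu_n = n^2\pi^2/T^2 + Q/T + O(n^{-1})$, so a single pair $(C,N)$, e.g. $C = \pi^2/(2T^2)$ and $N$ chosen large enough to absorb the lower-order terms, works simultaneously for all three choices. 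Condition (iii) is essentially Definition~\ref{def:discs} read off directly: $D_n$ is the open disc of radius $R = \max_k (E_{2k} - E_{2k-1})$ centered at $c_n = (\lambda_{2n} + \lambda_{2n-1})/2$ for $n > 0$ and at $c_0 = 0$, and since $R \ge \lambda_{2n} - \lambda_{2n-1}$ for every $n$ while each $\eta_n$ lies in $[\lambda_{2n-1}, \lambda_{2n}]$ whose midpoint is $c_n$, we get $|\eta_n - c_n| \le (\lambda_{2n} - \lambda_{2n-1})/2 \le R/2$. Along the way one should note that $R$ is finite and positive: finiteness because $\lambda_{2n} - \lambda_{2n-1} = O(n^{-1}) \to 0$ by part 4, so the gap lengths are bounded; positivity because in the infinite-gap setting at least one gap is genuinely open.

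The only point requiring an actual (small) estimate is (iv). Here I would show that the centers $c_n$ are strictly increasing in $n$ with $c_{n+1} - c_n = \tfrac{1}{2}\big((\lambda_{2n+2} + \lambda_{2n+1}) - (\lambda_{2n} + \lambda_{2n-1})\big) = (2n+1)\pi^2/T^2 + O(n^{-1}) \to \infty$ by part 4; hence there is $\tilde N$ with $c_{n+1} - c_n > 2R$ for all $n \ge \tilde N$, and monotonicity of the centers then forces $c_m - c_n > 2R$ for all $m > n \ge \tilde N$, so $\overline{D}_n \cap \overline{D}_m = \emptyset$ there. This is precisely the eventual-disjointness hypothesis of Proposition~\ref{prop:pl4}. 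I do not anticipate a genuine obstacle; the only things to be careful about are confirming $0 < R < \infty$ (handled by the asymptotics above together with the standing infinite-gap assumption) and noting that $D_0$, being index $0$, is correctly left unconstrained by the index restriction $n > \tilde N$ in the disjointness requirement.
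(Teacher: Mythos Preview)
Your proposal is correct and follows the same approach as the paper, which justifies the claim in the two sentences preceding it by noting that $R \ge \lambda_{2n}-\lambda_{2n-1}$ forces $\mu_n \in [\lambda_{2n-1},\lambda_{2n}] \subset \{|\lambda - c_n| \le R/2\}$ and that part~4 of Theorem~\ref{thm:forspec} gives $\mu_n,\lambda_{2n-1},\lambda_{2n} \ge Cn^2$. You are more thorough than the paper: it leaves the nondecreasing property and the eventual disjointness of the $\overline{D}_n$ implicit, whereas you verify both explicitly via the asymptotics $c_{n+1}-c_n = (2n+1)\pi^2/T^2 + O(n^{-1})$.
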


The set $\mathcal{D}$ can be seen to be an open set by combining claim \ref{clm:opennessproof} with the fact that $\mathbb{C} \setminus \mathbb{R}^+$ is also an open, and finite intersections of open sets are open.
The set
\begin{equation} \mathbb{C} \setminus \left(\mathbb{R}^+ \cup \bigcup_{n = 0}^{\infty} \overline{D}_{n} \right)\end{equation}
is open for the same reason.
The sets
\begin{equation} \mathbb{C} \setminus \bigcup_{n = 0}^\infty \overline{D}_n, \quad \mathbb{C} \setminus \bigcup_{k = 0}^{\mathcal{G}} \overline{D}_{n_k}\end{equation}
are also open because of claim \ref{clm:opennessproof}.

\begin{defn}
\label{prop:rhpPsi} The matrix Bloch--Floquet solution $\Psi$ is given by
\begin{equation} \label{eq:defPsi}
\Psi(x,\lambda) = \begin{pmatrix} \psi^-(x,\lambda) & \psi^+(x,\lambda) \\ {\psi^-}'(x,\lambda) & {\psi^+}'(x,\lambda) \end{pmatrix}.
\end{equation}
\end{defn}
We will use two notations for entries in this matrix.
One is $\psi^\pm$ notation introduced in the above definition, the other notation is the usual matrix notation $\psi_{ij}$ for the $i,j$th component of $\Psi$.

\begin{defn} \label{defn:regularize}
Let {$B$} be the $2 \times 2$ matrix valued function
\begin{equation} \label{eq:deformA} B(\lambda) :=  \frac{i\sqrt{f^0(\lambda)}}{\sqrt[4]{\Delta(\lambda)^2-4}} \begin{pmatrix} f^-(\lambda)  & 0 \\ 0 & f^+(\lambda) \end{pmatrix} \end{equation}
which is defined in terms of the following potentially infinite products
\begin{equation} f^+ (\lambda) := \prod_{\substack{n = 1 \\ \sigma_n = 1}}^\infty \frac{T^2}{n^2 \pi^2} (\mu_n - \lambda), \end{equation}
\begin{equation} f^- (\lambda) := \prod_{\substack{n = 1 \\ \sigma_n = -1}}^\infty \frac{T^2}{n^2 \pi^2} (\mu_n - \lambda), \end{equation}
and
\begin{equation} f^0 (\lambda) := \prod_{\substack{n = 1 \\ \sigma_n = 0}}^\infty \frac{T^2}{n^2 \pi^2} (\mu_n - \lambda). \end{equation}
\end{defn}

\begin{rmk} \label{eq:rmkrootRHP}
The functions $\sqrt{f^0(\lambda)}$ and $\sqrt[4]{\Delta(\lambda)^2-4}$ can be defined explicitly via the product expansions
\begin{equation} \sqrt{f^0(\lambda)} = \prod_{\substack{n = 1 \\ \sigma_n = 0}}^\infty (-i) \frac{T}{n \pi} \sqrt{\lambda- \mu_n}, \end{equation}
\begin{equation} \sqrt[4]{\Delta(\lambda)^2-4} = \sqrt{2} e^{-i \frac{\pi}{4}} \sqrt[4]{\lambda-\lambda_0} \prod_{n = 1}^\infty (-i)\frac{T}{n \pi} \sqrt[4]{\lambda-\lambda_{2n}} \sqrt[4]{\lambda-\lambda_{2n-1}}.\end{equation}
The quartic roots of the elementary factors in the second expansion are defined using the non-principal branch of the argument taking values in $[0,2\pi)$, in the same manner that we defined the square root (see remark \ref{rmk:defbranch}).
The function $\sqrt{f^0(\lambda)}$ has been defined so that it is holomorphic for ${\lambda \in \mathbb{C} \setminus \overline{ \{\lambda \in \mathbb{R} : |\{\lambda_n : \sigma_n = 0, \lambda_n \le \lambda\}| \text{ is odd} \}}}$ and is positive for real $\lambda < \displaystyle{\min_{\sigma_n = 0}}(\lambda_n)$.
The function $\sqrt[4]{\Delta(\lambda)^2-4}$ has been defined so that it is holomorphic for $\lambda \in \mathbb{C} \setminus \bigcup_{n = 1}^\infty [\lambda_{4n-4},\lambda_{4n-1}]$ and positive for real $\lambda < \lambda_0 = 0$.
\end{rmk}
Important properties of these functions will be discussed in what follows, but we should observe at this juncture that the asymptotic behavior of the $\mu_{n}$'s given by (\ref{eq:assspec123}) implies that the (possibly infinite) products are convergent, and the functions so constructed are analytic.
Moreover, the formal equalities
\begin{equation} \frac{\sqrt{f^0(\lambda)}}{\sqrt[4]{\Delta(\lambda)^2-4}} = \frac{\sqrt{2}}{2} \left( \frac{1}{E_0 - \lambda} \prod_{\substack{k = 1 \\ \sigma_{n_k} = 0}}^\mathcal{G} \frac{(\mu_{n_k}-\lambda)^2}{(E_{2k-1}-\lambda)(E_{2k}-\lambda)} \prod_{\substack{k = 1 \\ \sigma_{n_k} \ne 0}}^\mathcal{G} \frac{{n_k}^4 \pi^4}{T^4} \frac{1}{( E_{2k-1} - \lambda )(E_{2k}-\lambda)} \right)^{\frac{1}{4}}, \end{equation}
and
\begin{equation} f^+(\lambda) = \prod_{\substack{k = 1 \\ \sigma_{n_k} = 1}}^{\mathcal{G}} \frac{T^2}{n_k^2 \pi^2} (\mu_{n_k} - \lambda), \quad  f^-(\lambda) = \prod_{\substack{k = 1 \\ \sigma_{n_k} = -1}}^{\mathcal{G}} \frac{T^2}{n_k^2 \pi^2} (\mu_{n_k} - \lambda),\end{equation} 
show explicitly that $B(\lambda)$ depends only on $\Sigma(q)$.

\begin{defn}
Let $V : \mathbb{R}^+ \setminus \{E_j\}_{j = 0}^{2 \mathcal{G}} \to SL(2,\mathbb{C})$ be given by
\begin{equation} \label{eq:jump} V (\lambda) := \begin{cases} (-1)^{k + m(\lambda) - 1} \begin{pmatrix} 0 & i \frac{f^+(\lambda)}{f^-(\lambda)} \\ i \frac{f^-(\lambda)}{f^+(\lambda)} & 0 \end{pmatrix} & \lambda \in (E_{2k-2},E_{2k-1})  \\ (-1)^{k + m(\lambda)} e^{2 i \sigma_3 \sqrt{\lambda} x} & \lambda \in (E_{2k-1},E_{2k}) \end{cases}, \end{equation}
which is defined in terms of the counting function $m:\mathbb{R} \to \mathbb{Z}$ for the Dirichlet eigenvalues on the edges of non-degenerate gaps given by
\begin{equation} \label{eq:mcounting} m(\lambda) := |\{ k \in \mathbb{N} : \mu_{n_k} \le \lambda, \sigma_{n_k} = 0 \}| .\end{equation}
The matrix $\sigma_3 = \begin{pmatrix} 1 & 0 \\ 0 & -1 \end{pmatrix}$ is the third Pauli matrix which should not be confused with the signatures $\sigma_n$.
\end{defn}

Finally, we define  $\Phi$ via
\begin{equation} \label{eq:defPhi} \Phi (x,\lambda) := \Psi(x,\lambda)  B(\lambda) e^{i \sigma_3 \sqrt{\lambda} x }. \end{equation}

\begin{prop} \label{prop:rhpPhi}
The matrix-valued function $\Phi$ solves the following Riemann--Hilbert problem: \end{prop}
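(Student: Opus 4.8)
The plan is to verify the five defining properties of the Riemann--Hilbert problem directly from the factorization $\Phi(x,\lambda)=\Psi(x,\lambda)\,B(\lambda)\,e^{i\sigma_3\sqrt{\lambda}x}$ of (\ref{eq:defPhi}), using repeatedly that $B$ and $e^{i\sigma_3\sqrt{\lambda}x}$ are diagonal and hence commute (so equivalently $\Phi=\Psi\,e^{i\sigma_3\sqrt{\lambda}x}\,B$). Property (1) is immediate: $y_1,y_2,y_1',y_2'$ are entire in $\lambda$, $\rho^{\pm1}$ are holomorphic on $\mathbb{C}\setminus\sigma(L)$, the only poles of $1/y_2(T,\cdot)$ sit at the real points $\mu_n$, and the branch points of $\sqrt{f^0}$, $\sqrt[4]{\Delta^2-4}$ and $e^{i\sigma_3\sqrt{\lambda}x}$ all lie on $\mathbb{R}^+$; since $\sigma(L)\cup\{\mu_n\}\subset\mathbb{R}^+$ (recall $E_0=0$), the product $\Phi$ is holomorphic on $\mathbb{C}\setminus\mathbb{R}^+$.

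For property (2), on the interior of any band or gap away from the $\mu_n$ every factor has continuous boundary values, so $\Phi_\pm$ do as well. At a Dirichlet eigenvalue $\mu_n$ interior to a gap, Claim \ref{clm:dirflo} places the simple pole of $\Psi$ in its first column when $\sigma_n=-1$ and in its second when $\sigma_n=1$, and the simple zero of $f^-$, resp. $f^+$, in the matching diagonal entry of $B$ cancels it (the zeros of $y_2(T,\cdot)$ being simple), so $\Phi_\pm$ are continuous there. At a degenerate gap $\mu_n=\lambda_{2n-1}=\lambda_{2n}$ the would-be pole of $\psi^\pm$ cancels because $\rho(\mu_n)^{\pm1}=y_1(T,\mu_n)$ there, while the double zero of $\Delta^2-4$ balances the simple zero of $f^0$ inside $\sqrt{f^0}/\sqrt[4]{\Delta^2-4}$, so $\Phi$ is in fact analytic across closed gaps --- this is precisely the role of the $E_j$. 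At a genuine band end $E_j$ the square-root branch point of $\rho$, the quartic-root zero of $\sqrt[4]{\Delta^2-4}$, and, when $\mu_n=E_j$, the extra square-root factor of $\sqrt{f^0}$ combine to give a singularity no worse than $(\lambda-E_j)^{-1/4}$.

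Property (3) I expect to be the main obstacle. I would treat bands and gaps separately: across a band the two boundary values of $\rho$ are reciprocal, so $\Psi_-=\Psi_+\left(\begin{smallmatrix}0&1\\1&0\end{smallmatrix}\right)$, while across a gap $\rho$ is single-valued, so $\Psi_+=\Psi_-$. Inserting this, together with the boundary values $\pm\sqrt{\lambda}$ of $\sqrt{\lambda}$ and the diagonal boundary values of $B$, into $V=\Phi_-^{-1}\Phi_+$, the scalar exponential factors rearrange so that $V$ is anti-diagonal on bands and of the form $\epsilon\,e^{2i\sigma_3\sqrt{\lambda}x}$ on gaps, with $\epsilon$ a product of boundary-value ratios of $\sqrt[4]{\Delta^2-4}$ and $\sqrt{f^0}$. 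The remaining, delicate, step is the sign and phase bookkeeping: reading these ratios off the explicit product expansions of Remark \ref{eq:rmkrootRHP} by counting the branch cuts crossed, observing that the closed-gap contributions cancel between the two radicals, and checking that what survives is exactly $(-1)^{k+m(\lambda)}$ on $(E_{2k-1},E_{2k})$ and $(-1)^{k+m(\lambda)-1}\!\left(\begin{smallmatrix}0&i\,f^+/f^-\\ i\,f^-/f^+&0\end{smallmatrix}\right)$ on $(E_{2k-2},E_{2k-1})$, with $m$ the counting function (\ref{eq:mcounting}).

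For property (4), write $\Phi=\Psi\,e^{i\sigma_3\sqrt{\lambda}x}\,B$; Lemma \ref{lma:BFass} gives $\Psi\,e^{i\sigma_3\sqrt{\lambda}x}=\left(\begin{smallmatrix}1&1\\-i\sqrt{\lambda}&i\sqrt{\lambda}\end{smallmatrix}\right)(I+O(\sqrt{\lambda}^{-1}))$ on $\Omega_s$ --- a one-line check after multiplying by the inverse of $\left(\begin{smallmatrix}1&1\\-i\sqrt{\lambda}&i\sqrt{\lambda}\end{smallmatrix}\right)$ and inserting the expansions of $\psi^\pm$ and $(\psi^\pm)'$ --- so right-multiplying by $B$ yields (4). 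For property (5), I would invoke the Phragm\'{e}n--Lindel\"{o}f algebras of Section \ref{sec:PL}: $y_i$, $y_i'$, $\Delta$ are entire of order $\le\tfrac12$, hence in $\mathcal{A}_1(\mathbb{C})$ by Proposition \ref{prop:pl1}; $1/y_2(T,\cdot)$ and $1/\sqrt[4]{\Delta^2-4}$ lie in $\mathcal{A}_1$ of the plane with the gap discs removed, by Propositions \ref{prop:pl4} and \ref{prop:pl2} together with Claim \ref{prop:pl4hyp}; $f^\pm$ and $\sqrt{f^0}$ have order $\le\tfrac12$; and $e^{i\sigma_3\sqrt{\lambda}x}\in\mathcal{A}_{1/2}$. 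After the cancellations of part (2) each $\phi_{ij}$ is a finite product of such functions, and is in particular bounded near the closed-gap points, where one uses the cancellation in $(\rho(\lambda)^{\pm1}-y_1(T,\lambda))/y_2(T,\lambda)$ rather than bounding $1/y_2(T,\cdot)$ alone; hence $\phi_{ij}\in\mathcal{A}_2(\mathcal{D})$, which is property (5).
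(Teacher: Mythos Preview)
Your proposal is correct and follows essentially the same five-part verification as the paper, using Claim~\ref{clm:dirflo} for the pole cancellations, Lemma~\ref{lma:BFass} for the asymptotics, and the Phragm\'en--Lindel\"of algebras of Section~\ref{sec:PL} for condition~(5). The only notable differences in execution are that the paper handles the closed-gap regularity in condition~(2) by passing to the $z$-plane via (\ref{eq:deftpsi})--(\ref{eq:zbloflo}) rather than arguing directly with boundary values of $\rho^{\pm1}-y_1(T,\cdot)$, and in condition~(5) it explicitly invokes Proposition~\ref{prop:pl3} on the auxiliary functions $h^\pm=(f^\pm)^2f^0/(\Delta^2-4)$ to propagate the $\mathcal{A}_1$ bound into the closed-gap discs.
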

\begin{rhp} \label{rhp:Phi}
For some $x \in \mathbb{R}$ find a $2 \times 2$ matrix valued function $\Phi(x,\lambda)$ such that:
\begin{enumerate}

\item $\Phi(x,\lambda)$ is a holomorphic function of $\lambda$ for $\lambda \in \mathbb{C} \setminus \mathbb{R}^+$.

\item $\Phi_{\pm} (x,\lambda)$ are continuous functions of $\lambda \in \mathbb{R}^+ \setminus \{E_j\}_{j = 0}^{2 \mathcal{G}}$ that have at worst quartic root singularities on $\{E_k\}_{k = 0}^{2 \mathcal{G}}$.

\item $\Phi_{\pm} (x,\lambda)$ satisfy the jump relation $\Phi_+(x,\lambda) = \Phi_-(x, \lambda)  V (x,\lambda)$.

\item $\Phi (x,\lambda)$ has an asymptotic description of the form
\begin{equation} \label{eq:assphi} \Phi(x,\lambda) =  \begin{pmatrix} 1 & 1 \\ -i \sqrt{\lambda} & i \sqrt{\lambda} \end{pmatrix} \left( I + O\left(\sqrt{\lambda}^{-1} \right) \right) B(\lambda) \end{equation}
as $\lambda \to \infty$ with $\lambda \in \Omega_s$ for some $0 < s < \tfrac{\pi}{8}$.

\item There exist positive constants $c$, and $M$ such that the entries $\phi_{ij}$ of $\Phi$ satisfy ${|\phi_{ij} (x,\lambda)| \le M e^{c|\lambda|^2} }$ for all $\lambda \in \mathcal{D}$.
\end{enumerate}
\end{rhp}

Before giving the proof, we have one comment on condition 5.
We will actually prove a stronger bound $|\phi_{ij}| \le M e^{c |\lambda|}$ and $0 < s < \frac{\pi}{4}$.
We choose to present the weaker bound in the theorem statement so that condition 5 of Riemann--Hilbert problem \ref{rhp:Phi} for the inverse spectral theory of Hill's operators matches condition 5 of Riemann--Hilbert problem \ref{rhp:KdV} for the KdV equation.

\begin{proof}[Proof of condition 1]
As observed in Section 2, the quanitities $\psi^{\pm}$ are meromorphic functions of $\lambda$ for $\lambda \in \mathbb{C} \setminus \ \sigma(L)$.  Differentiating (\ref{eq:bloflo}) with respect to $x$ we establish
\begin{equation} \label{eq:blofloprime} {\psi^\pm}' (x,\lambda) = y_{1}' (x,\lambda) + \frac{\rho(\lambda)^{\pm 1} - y_1(T, \lambda)}{y_2(T,\lambda)} y_{2}' (x,\lambda). \end{equation}
It follows from (\ref{eq:bloflo}) and (\ref{eq:blofloprime}) that the entries of $\Psi$ can be expressed as rational combinations of functions that are holomorphic in $\mathbb{C} \setminus \sigma(L)$, and therefore $\Psi$ is meromorphic in $\mathbb{C} \setminus \sigma(L)$.
Moreover, by looking at (\ref{eq:bloflo}), (\ref{eq:blofloprime}), (\ref{eq:deformA}) and (\ref{eq:defPhi}) we see that $\Phi$ could only be be singular on $\{\mu_n\}_{n = 1}^\infty \subset \mathbb{R}^+$ and $\{ \lambda_n \}_{n = 0}^\infty \subset \mathbb{R}^+$.
Therefore $\Phi$ is a holomorphic function of $\lambda$ for $\lambda \in \mathbb{C} \setminus \mathbb{R}^+$. \end{proof}

\begin{proof}[Proof of condition 2]
It follows from (\ref{eq:bloflo}) and (\ref{eq:blofloprime}) that $\Psi$ and $\Psi_{\pm}$ can only be singular on the zeros $\{\mu_n\}_{n = 1}^\infty$ of $y_2(T,\lambda)$.
It then follows from (\ref{eq:deformA}) and (\ref{eq:defPhi}) that $\Phi_\pm$ can only be singular on $\{\lambda_n\}_{n = 1}^\infty$ and $\{\mu_n\}_{n = 1}^\infty$.
Our first task is therefore to show that $\Phi_\pm(x,\lambda)$ are nonsingular at $\lambda = \mu_n$ unless $\mu_n = E_k$ for some $k$.
There are 3 cases to consider: $\sigma_n = 1$, $\sigma_n = -1$ and $\lambda_{2n} = \lambda_{2n-1} = \mu_n$.

Suppose that $\sigma_n = 1$, which is only possibly when $n = n_k$ for some $k$.
By recalling the definition of $\sigma_n$ and the fact that $|\rho(\lambda)| < 1$ we see that $y_1(T;\mu_n) = \rho(\mu_n)^{-1}$ using claim \ref{clm:dirflo}. 
Therefore $\rho(\lambda)^{-1} - y_1(T;\lambda)$ is a holomorphic function of $\lambda$ in some neighborhood of $\mu_n$ with a zero $\mu_n$.
The zero of $\rho(\lambda)^{-1} - y_1(T;\lambda)$ at $\mu_n$ cancels the simple zero of $y_2(T;\lambda)$ appearing in the denominator of (\ref{eq:bloflo}) and (\ref{eq:blofloprime}) when forming $\phi_{11\pm}(x,\lambda)$ and $\phi_{21\pm}(x,\lambda)$.
Here we are taking $\phi_{ij}(x,\lambda)$ to indicate the $i,j$th element of the matrix $\Phi(x,\lambda)$.
The simple zero appearing in the denominators of (\ref{eq:bloflo}) and (\ref{eq:blofloprime}) are canceled by the simple zero $f^+$ in forming $\phi_{12\pm}$ and $\phi_{22\pm}$.
Therefore $\Phi_{\pm}$ is nonsingular at $\mu_n$.

Suppose that $\sigma_n = -1$, which is only possibly when $n = n_k$ for some $k$.
By recalling the definition of $\sigma_n$ and the fact that $|\rho(\lambda)| < 1$ we see that $y_1(T;\mu_n) = \rho(\mu_n)$.
Therefore $\rho(\lambda) - y_1(T;\lambda)$ is a holomorphic function of $\lambda$ in some neighborhood of $\mu_n$ with a zero $\mu_n$.
The zero of $\rho(\lambda) - y_1(T;\lambda)$ at $\mu_n$ cancels the simple zero of $y_2(T;\lambda)$ appearing in the denominator of (\ref{eq:bloflo}) and (\ref{eq:blofloprime}) when forming $\phi_{12\pm}(x,\lambda)$ and $\phi_{22\pm}(x,\lambda)$.
The simple zero appearing in the denominators of (\ref{eq:bloflo}) and (\ref{eq:blofloprime}) are canceled by the simple zero $f^-$ in forming $\phi_{11\pm}$ and $\phi_{21\pm}$.
Therefore $\Phi_{\pm}$ is nonsingular at $\mu_n$.

Suppose that $\lambda_{2n-1} = \lambda_{2n}$.
Let us form
\begin{equation} \label{eq:deftpsi} \tilde \psi (x; z) := \begin{cases} \psi^+ (x,z^2) & z \in \{\text{Im}(z) > 0\} \\ \psi^- (x,z^2) & z \in \{\text{Im}(z) < 0\} \end{cases} \end{equation}
and
\begin{equation} \label{eq:deftrho} \tilde \rho (z) := \begin{cases} \rho (z^2) & z \in \{\text{Im}(z) > 0\} \\ \rho^{-1} (z^2) & z \in \{\text{Im}(z) < 0\} \end{cases}, \end{equation}
which are analytic in $\mathbb{C} \setminus \mathbb{R}$ and continuous on $\{z \in \mathbb{C} : z^2 \in \sigma(L) \setminus \bigcup_{n' = 1}^\infty \mu_{n'}\}$.
From (\ref{eq:bloflo}), we see that
\begin{equation} \label{eq:zbloflo} \tilde \psi(x,z) = y_1(x,z^2) + \frac{\tilde \rho(z)-y_1(T,z^2)}{y_2(T,z^2)} y_2(x,z^2).  \end{equation}
The claims of analyticity and continuity can be verified directly by considering the formulas  (\ref{eq:rhodelta},\ref{eq:rhoinvdelta}) for $\rho^\pm(\lambda)$ with $\sqrt{\Delta(\lambda)^2-4}$ given by (\ref{eq:expansionsqrtflo}), and transferring these formulas from the $\lambda$-plane to the $z$-plane. 
Therefore, $\tilde \psi$ and $\tilde \rho$  analytically continue to holomorphic functions on ${(\mathbb{C} \setminus \mathbb{R}) \cup \{z \in \mathbb{C} : z^2 \in \sigma(L) \setminus \bigcup_{n' = 1}^\infty \mu_{n'} \}}$, 
which contains punctured neighborhoods around, but not including, $\pm\sqrt{\mu_n}$.
Therefore, the calculation $\tilde \rho(\pm\sqrt{\mu_n}) - y_1(T;(\pm\sqrt{\mu_n})^2) = 0$ implies that $\tilde \rho(z) - y_1(T;z^2) = 0$ has zeros at $\pm \sqrt{\mu_n}$ that are at least simple.
These simple zeros at $\pm \sqrt{\mu_n}$ cancel the simple zeros of $y_2(T;z^2)$ at $\pm \sqrt{\mu_n}$ in (\ref{eq:zbloflo}), so $\tilde \psi (x;z)$ and $\tilde \psi' (x;z)$ are regular at $\pm \sqrt{\mu_n}$.
Upon returning to the $\lambda$ plane we see that the boundary values $\psi_\pm^{\pm}(x,\mu_n)$ exist.
Moreover, the square root zero of $\sqrt{f^0(\lambda)}$ at $\mu_n$ cancels the square root zero of $\sqrt[4]{\Delta(\lambda)^2 - 4}$ at $\lambda_{2n-1} = \lambda_{2n} = \mu_n$.
It then follows from (\ref{eq:defPhi}) that $\Phi_{\pm}(x,\lambda)$ is nonsingular at $\mu_n$. 

We will now show that $\Phi_{\pm}(\lambda,x)$ have at worst quartic root singularities at $E_j$ for $j = 0,1,\cdots,\mathcal{G}$.
Suppose that $\sigma_{n_k} = \pm 1$, then the only singular contribution to $\Phi_{\pm}(x,\lambda)$ at $\lambda \to E_{2k}$ or $, E_{2k-1}$ is from the boundary values of $1/\sqrt[4]{\Delta(\lambda)^2 - 4}$ for $\lambda \in \mathbb{R}^+$.
Suppose that $\sigma_{n_k} = 0$, then either $\mu_n = E_{2n}$ or $\mu_n = E_{2n-1}$.
If $\mu_n = E_{2n}$, then the only singular contribution to $\Phi_{\pm}(x,\lambda)$ near $E_{2n-1}$ is from the boundary values of $1/\sqrt[4]{\Delta(\lambda)^2 - 4}$ for $\lambda \in \mathbb{R}^+$.
If $\mu_n = E_{2n-1}$ then the only singular contribution to $\Phi_{\pm}(x,\lambda)$ near $E_{2n}$ is from the boundary values of $1/\sqrt[4]{\Delta(\lambda)^2 - 4}$ for $\lambda \in \mathbb{R}^+$.

If either $\mu_n = E_{2n}$ or $\mu_n = E_{2n-1}$ then $\psi_\pm^\pm (x,\lambda)$ have at worst square root singularities at $\mu_n$.
This is verified by observing that the singular behaviors of $\psi^\pm (x,\lambda)$ and its boundary values $\psi_\pm^\pm(x,\lambda)$ are determined by
\begin{equation} \label{eq:dontwantrepeatlabel123} \frac{\rho(\lambda)^{\pm 1} - y_1(T,\lambda)}{y_2(T,\lambda)} = \frac{\Delta(\lambda) \mp \sqrt{\Delta(\lambda)^2-4}}{y_2(T,\lambda)}. \end{equation}
It follows from claim \ref{clm:dirflo} that $\rho(\mu_n)^{\pm 1} - y_1(T,\mu_n) = 0$ because $\rho(\mu_n) = \pm 1$ at $\mu_n$ (in this case, $\mu_n$ lies at a periodic/antiperiodic eigenvalue).
Since the numerator of (\ref{eq:dontwantrepeatlabel123}) involves only holomorphic functions and the square root of a holomorphic function, the zeros of $\Delta(\lambda) \mp \sqrt{\Delta(\lambda)^2-4}$ at $\mu_n$ must have order at least $\tfrac{1}{2}$.
Since $y_2(T,\lambda)$ has a simple zero at $\mu_n$, $\psi_\pm^\pm (x,\lambda)$ must then have at worst square root singularities at $\mu_n$.
The square root singularities can also be verified by combining $\psi^\pm(x,\lambda)$ into a single function on a hyper-elliptic curve with a pole at the branch point corresponding to $\mu_n$, and the pole behavior on the curve manifests itself in the $\lambda$-plane as square root singular behavior.
The latter approach involving the hyper-elliptic curve is discussed in detail in \cite{TrDe13b} for example.
The boundary values of $1/\sqrt[4]{\Delta(\lambda)^2 - 4}$ for $\lambda \in \mathbb{R}$ have a quartic root singularity at $\mu_n$, and the boundary values of $\sqrt{f^0(\lambda)}$ have a square root zero at $\mu_n$; and therefore, $\Phi$ has at worst a quartic root singularity at $\mu_n$.
\end{proof}

\begin{proof}[Proof of condition 3]
By considering the discrepancy in the boundary values of $\sqrt{\Delta(\lambda)^2-4}$ on the branch cuts $\sigma(L)$ we find that the boundary values $\rho_{\pm}$ satisfy
\begin{equation} \label{eq:jumpshifteval} \rho_+ (\lambda) = \rho_- (\lambda)^{-1}, \rho_+(\lambda)^{-1} = \rho_-(\lambda) \text{ for } \lambda \in \sigma(L). \end{equation}
The only difference in (\ref{eq:bloflo}) between $\psi^+$ and $\psi^-$ is the use of $\rho(\lambda)$ versus $\rho^{-1}(\lambda)$, which establishes $\Psi_+(x,\lambda) = \Psi_-(x,\lambda) \sigma_1$ for $\lambda \in \sigma(L)$.
The matrix $\sigma_1 = \begin{pmatrix} 0 & 1 \\ 1 & 0 \end{pmatrix}$ is the first Pauli matrix, which should not be confused with the signature $\sigma_n$.
The jump condition for $\Phi$ is then calculated by conjugating the jump condition for $\Psi$ as follows
\begin{equation} V(\lambda) = \begin{cases} B_-(\lambda)^{-1} e^{-i \sigma_3 \sqrt{\lambda}_- x} \sigma_1 e^{i \sigma_3 \sqrt{\lambda}_+ x} B_+(\lambda) & \lambda \in \sigma(L) \\ B_-(\lambda)^{-1} e^{i \sigma_3 (\sqrt{\lambda}_+ - \sqrt{\lambda}_-) x} B_+(\lambda) & \lambda \in [E_0,\infty) \setminus \sigma(L) \end{cases}. \end{equation}
The $i$ and $(-1)^{k+m(\lambda)}$ appearing in the definition (\ref{eq:jump}) of the jump matrix $V$ appear due to the branch cuts of $\sqrt{f^0(\lambda)}$ and $\sqrt[4]{\Delta(\lambda)^2 - 4}$ in the definition (\ref{eq:deformA}) of $B$.
\end{proof}

\begin{proof}[Proof of condition 4]
Fix $0 < s < \frac{\pi}{8}$.
Plugging the asymptotic descriptions in lemma \ref{lma:BFass} into (\ref{eq:defPsi}) gives the asymptotic description
\begin{equation}
\Psi(x,\lambda) = \begin{pmatrix} 1 + O(\sqrt{\lambda}^{-1}) & 1 + O(\sqrt{\lambda}^{-1}) \\ -i\sqrt{\lambda} + O(1) & i \sqrt{\lambda} + O(1) \end{pmatrix} e^{-i \sigma_3 \sqrt{\lambda} x}
\end{equation}
which is valid as $\lambda \to \infty$ for $\lambda \in \Omega_s$.
The computation
\begin{align} 
\frac{1}{2 i \sqrt{\lambda}}& \begin{pmatrix} i \sqrt{\lambda} & -1 \\ i \sqrt{\lambda} & 1 \end{pmatrix} \Psi(x,\lambda) e^{i \sigma_3 \sqrt{\lambda} x} \\
& = \frac{1}{2 i \sqrt{\lambda}} \begin{pmatrix} i \sqrt{\lambda} & -1 \\ i \sqrt{\lambda} & 1 \end{pmatrix} \begin{pmatrix} 1 + O(\sqrt{\lambda}^{-1}) & 1 + O(\sqrt{\lambda}^{-1}) \\ -i\sqrt{\lambda} + O(1) & i \sqrt{\lambda} + O(1) \end{pmatrix} \\
& = I + O(\sqrt{\lambda}^{-1})
\end{align}
is valid as $\lambda \to \infty$ for $\lambda \in \Omega_s$ implying (\ref{eq:assphi}).
\end{proof}

\begin{proof}[Proof of condition 5]
In terms of the notation introduced in section \ref{sec:PL} we need to show that
\begin{equation} \psi_{ij} \in \mathcal{A}_2 \left(\mathbb{C} \setminus \left( \mathbb{R}^+ \cup \bigcup_{k = 0}^\mathcal{G} \overline{D}_{n_k} \right) \right), \quad \phi_{ij} \in \mathcal{A}_2 \left(\mathbb{C} \setminus \left( \mathbb{R}^+ \cup \bigcup_{k = 0}^\mathcal{G} \overline{D}_{n_k} \right)\right). \end{equation}
However, it is possible to prove the stronger condition
\begin{equation} \label{eq:boundphi} \psi_{ij} \in \mathcal{A}_1 \left(\mathbb{C} \setminus \left( \mathbb{R}^+ \cup \bigcup_{k = 0}^\mathcal{G} \overline{D}_{n_k} \right) \right), \quad \phi_{ij} \in \mathcal{A}_1 \left(\mathbb{C} \setminus \left( \mathbb{R}^+ \cup \bigcup_{k = 0}^\mathcal{G} \overline{D}_{n_k} \right)\right). \end{equation}
We prove the stronger condition (\ref{eq:boundphi}) by applying the propositions from section \ref{sec:PL}.

The entire function $y_i(x,\lambda)$ and $y_i' (x,\lambda)$ of $\lambda$ have order at most $\tfrac{1}{2}$ so
\begin{equation} \label{eq:ybound} y_i(x,\cdot), y_i' (x,\cdot) \in \mathcal{A}_1 (\mathbb{C}) \ , \end{equation}
by proposition \ref{prop:pl1}.  The fact that 
\begin{equation} \label{eq:rhobound} \rho^\pm(\lambda) \in \mathcal{A}_1 (\mathbb{C} \setminus \mathbb{R}^+) \end{equation}
is established by combining the bound
\begin{equation} \rho(\lambda)^\pm \le \frac{|\Delta(\lambda)| + \sqrt{|\Delta(\lambda)^2 - 4|}}{2} \le 1 + |\Delta(\lambda)| \end{equation}
with the fact $\Delta(\lambda)$ is an entire function of order $\tfrac{1}{2}$.  This yields a bound that is valid in the entire plane, but since $\rho$ has jumps across the bands of spectrum, we conclude that $ \rho^\pm(\lambda) \in \mathcal{A}_1 (\mathbb{C} \setminus \mathbb{R}^+)$.

The inclusion
\begin{equation}  \label{eq:y2bound} y_2(T, \lambda)^{-1} \in \mathcal{A}_1 \left(\mathbb{C} \setminus \bigcup_{n = 0}^\infty \overline{D}_n \right) \end{equation}
is established by applying proposition \ref{prop:pl4} to the canonical product representation (\ref{eq:hady2}) of $y_2(T,\lambda)$.
We have intentionally taken this over the union of the sequence of discs $D_n$, rather than the subsequence of discs $D_{n_k}$.
This is because (\ref{eq:y2bound}) would not be satisfied if the union were to be taken over $D_{n_k}$ because $y_2(T,\lambda)^{-1}$ would be singular at $\{\mu_n\}_{n = 1}^\infty \setminus \{\mu_{n_k}\}_{n = 1}^{\mathcal{G}}$.

We establish
\begin{equation} \label{eq:psibound} \psi^\pm(x,\cdot), {\psi^\pm}' (x,\cdot) \in \mathcal{A}_1 \left(  \mathcal{D} \right)\end{equation}
by combining (\ref{eq:ybound}),(\ref{eq:rhobound}) and (\ref{eq:y2bound}) with proposition \ref{prop:pl1}.

The smallest numbers $\alpha_{\pm}$ and $\alpha_0$ so that
\begin{equation} \sum_{\sigma_n = \pm 1} \mu_n^{- \alpha_\pm}, \sum_{\sigma_n = 0} \mu_n^{- \alpha_0} \end{equation}
converge are such that $\alpha_\pm, \alpha_0 \le \tfrac{1}{2}$ so $f^\pm$ and $f^0$ are entire functions with orders at most $\tfrac{1}{2}$ \cite[page 251]{Ti39}.
Therefore
\begin{equation} \label{eq:bbound} f^\pm, f^0 \in \mathcal{A}_1 (\mathbb{C}) \end{equation}
by propositions \ref{prop:pl1} and \ref{prop:pl2}.
The inclusion 
\begin{equation}  \label{eq:flobound} \frac{1}{\Delta(\lambda)^2 - 4} \in \mathcal{A}_1 \left(\mathbb{C} \setminus \bigcup_{n = 0}^\infty \overline{D}_n\right) \end{equation}
is established by applying proposition \ref{prop:pl4} to the canonical product representations (\ref{eq:hadfloper}) and (\ref{eq:hadfloaper}) of $\Delta (\lambda) - 2$ and $\Delta (\lambda) + 2$, and then applying proposition \ref{prop:pl2}.
The inclusions
\begin{equation} \label{eq:PLboundhpm} h^\pm(\lambda) := \frac{f^{\pm} (\lambda)^2 f^0 (\lambda) }{\Delta(\lambda)^2 - 4} \in \mathcal{A}_1 \left(\mathbb{C} \setminus \bigcup_{n = 0}^\infty \overline{D}_n \right) \end{equation}
are established by noting that the above functions all satisfy the same inclusion.
Now notice if $n$ is distinct from the collection of  $n_k$s, then  $h^\pm (\lambda)$ is in fact analytic at $\mu_n$ which coincides with the center of $D_n$.
The functions $h^\pm$ are therefore analytic in such a $D_n$, and a bound of the form $|h^\pm(\lambda)| \le Me^{c|\lambda|}$ on $h^\pm$ guaranteed by the inclusion (\ref{eq:PLboundhpm}) extends into such a $D_n$ by Proposition \ref{prop:pl3}.
This allows us to conclude,
\begin{equation} h^\pm(\lambda)  \in \mathcal{A}_1 \left(\mathbb{C} \setminus \bigcup_{k = 0}^\mathcal{G} \overline{D}_{n_k} \right). \end{equation}
Finally, the desired result (\ref{eq:boundphi}) is established by applying proposition \ref{prop:pl2} to ${\Phi (x,\lambda) = \Psi(x,\lambda) B(\lambda) e^{i \sigma_3 \sqrt{\lambda} x}}$ and using (\ref{eq:psibound}), (\ref{eq:bbound}) and (\ref{eq:flobound}).\end{proof}

\begin{lma}
If $\Phi$ solves Riemann--Hilbert problem \ref{rhp:Phi} then $\det(\Phi) \equiv 1$.
\end{lma}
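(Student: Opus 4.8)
The plan is to prove that $d(x,\lambda) := \det \Phi(x,\lambda)$ (with $x$ the fixed parameter of the problem) is identically equal to $1$, by a Liouville argument in which the subexponential control of condition 5 — available only away from $\mathbb{R}^+$ — is promoted to a global bound via the Phragm\'{e}n--Lindel\"{o}f Theorem. Throughout I would use only conditions 1--5 of Riemann--Hilbert problem \ref{rhp:Phi}, together with the fact that $B(\lambda)$ is a fixed function of the spectral data, so its asymptotics are independent of the particular solution $\Phi$.

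First I would show that $d$ extends to an entire function. Condition 1 makes $d$ holomorphic on $\mathbb{C}\setminus\mathbb{R}^+$; since $V(x,\lambda)\in SL(2,\mathbb{C})$, condition 3 gives $d_+ = \det(\Phi_- V) = \det\Phi_- = d_-$ on $\mathbb{R}^+\setminus\{E_j\}$, so $d$ continues holomorphically across $\mathbb{R}^+\setminus\{E_j\}$. Near each $E_j$, condition 2 forces $|\phi_{ij\pm}(\lambda)| = O(|\lambda - E_j|^{-1/4})$, hence $|d(\lambda)| = O(|\lambda - E_j|^{-1/2}) = o(|\lambda - E_j|^{-1})$, so the isolated singularity at $E_j$ is removable and $d$ is entire.

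Next I would extract the leading behavior of $d$ in $\Omega_s$. From (\ref{eq:assphi}), the leading matrix has determinant $2i\sqrt{\lambda}$ and the middle factor has determinant $1 + O(\sqrt{\lambda}^{-1})$, so
\[ d(x,\lambda) = 2i\sqrt{\lambda}\,\bigl(1 + O(\sqrt{\lambda}^{-1})\bigr)\det B(\lambda). \]
By (\ref{eq:deformA}) and the product expansions of remark \ref{eq:rmkrootRHP}, $\det B(\lambda) = -f^0(\lambda)f^+(\lambda)f^-(\lambda)/\sqrt{\Delta(\lambda)^2-4}$, and by (\ref{eq:hady2}) the numerator equals $-y_2(T,\lambda)$ (the signatures partition the index set). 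Feeding in $y_2(T,\lambda) = e^{-i\sqrt{\lambda}T}\bigl(n(T,\lambda) - m(T,\lambda)\bigr)/(2i\sqrt{\lambda})$ with Proposition \ref{prop:assmn}, and $\sqrt{\Delta(\lambda)^2-4} = \Delta(\lambda) - 2\rho(\lambda)$ with $\rho(\lambda)$ exponentially small in $\Omega_s$ together with Proposition \ref{prop:assdelta}, I expect $\det B(\lambda) = (2i\sqrt{\lambda})^{-1}\bigl(1 + O(\sqrt{\lambda}^{-1})\bigr)$ and hence $d(x,\lambda) = 1 + O(\sqrt{\lambda}^{-1})$ as $\lambda\to\infty$ in $\Omega_s$. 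Consequently $g := d - 1$ is an entire function bounded on $\overline{\Omega_{s'}}$ for every $s < s' < \tfrac{\pi}{4}$, since $\overline{\Omega_{s'}}\subset\Omega_s$.

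Finally I would close the argument with Phragm\'{e}n--Lindel\"{o}f and Liouville. Condition 5 gives $|g(\lambda)|\le Me^{c|\lambda|^2} + 1$ on $\mathcal{D}$; since $g$ is entire this estimate passes by continuity to all of $\mathbb{C}\setminus\bigcup_k\overline{D}_{n_k}$, whence Proposition \ref{prop:pl3} (applied with $\Omega = \mathbb{C}$, the discs $\overline{D}_{n_k}$ being eventually disjoint) yields $g\in\mathcal{A}_2(\mathbb{C})$. Fixing $s < s' < \tfrac{\pi}{4}$ and applying Theorem \ref{thm:PL} to the sector $\Upsilon := \{\lambda : |\arg\lambda| < s'\}$ — of opening $2s' < \tfrac{\pi}{2}$, so that $p = 2$ satisfies $p < \pi/(2s')$ — gives $g$ bounded on $\Upsilon$, because $g$ is holomorphic on $\Upsilon$, continuous up to $\overline{\Upsilon}$, bounded on the two boundary rays (which lie in $\overline{\Omega_{s'}}$), and subject to $|g(\lambda)|\le Me^{c|\lambda|^2}$ there. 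Since $\Upsilon\cup\overline{\Omega_{s'}} = \mathbb{C}$, $g$ is bounded on $\mathbb{C}$, so Liouville's theorem makes it constant, and $g\to 0$ in $\Omega_s$ forces $g\equiv 0$, i.e.\ $\det\Phi\equiv 1$. The main obstacle is the asymptotic identity $\det B(\lambda) = (2i\sqrt{\lambda})^{-1}(1 + O(\sqrt{\lambda}^{-1}))$ — pinning the limit of $d$ in $\Omega_s$ to exactly $1$ — which rests on the bookkeeping $f^0 f^+ f^- = y_2(T,\cdot)$ and the Section 3 asymptotics of $y_2(T,\lambda)$ and $\sqrt{\Delta(\lambda)^2-4}$; a secondary technicality is transferring the subexponential bound from $\mathcal{D}$ onto $\mathbb{R}^+$ and into the discs so that Phragm\'{e}n--Lindel\"{o}f applies on a sector straddling the positive axis.
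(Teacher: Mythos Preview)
Your proof is correct and follows the same architecture as the paper's: show $d=\det\Phi$ is entire, obtain $d=1+O(\sqrt{\lambda}^{-1})$ in $\Omega_s$, push the $\mathcal{A}_2$ bound from $\mathcal{D}$ to all of $\mathbb{C}$ via continuity and Proposition~\ref{prop:pl3}, then apply Phragm\'en--Lindel\"of on a narrow sector around $\mathbb{R}^+$ and finish with Liouville.

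The one genuine difference is how the normalization $d\to 1$ in $\Omega_s$ is obtained. You compute $\det B(\lambda)=-f^0f^+f^-/\sqrt{\Delta^2-4}=-y_2(T,\lambda)/\sqrt{\Delta(\lambda)^2-4}=(2i\sqrt{\lambda})^{-1}(1+O(\sqrt{\lambda}^{-1}))$ directly from the Hadamard factorization (\ref{eq:hady2}) and the Section~3 asymptotics of $y_2(T,\lambda)$ and $\Delta(\lambda)$. The paper instead first verifies $\det\Phi\equiv 1$ for the \emph{explicit} solution (\ref{eq:defPhi}) via the Wronskian identity $\det\Psi(x,\lambda)=-\sqrt{\Delta(\lambda)^2-4}/y_2(T,\lambda)$, and then reads off (\ref{eq:assdetA}) by comparing with the asymptotic condition (\ref{eq:assphi}). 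The two computations are equivalent (your $\det B$ is exactly $1/\det\Psi$), but the paper's framing makes it transparent that the constant must be $1$ because a solution with determinant identically $1$ already exists, whereas your route requires checking the sign and leading constant by hand.
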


\begin{proof}
We begin by proving that the particular solution to Riemann--Hilbert problem \ref{rhp:Phi} given by (\ref{eq:defPhi}) satisfied $\det(\Phi) \equiv 1$.
Recalling the fact that the Wronskian $[\psi^-(x,\lambda), \psi^+(x,\lambda)] = \det(\Psi(x,\lambda))$ for any two solutions to a Sturm--Liouville equation is constant in $x$, we calculate 
\begin{equation} \label{eq:detPhi} \det(\Phi(x,\lambda)) = -\frac{y_2(T,\lambda)}{\sqrt{\Delta(\lambda)^2 - 4}} \det \Psi(x,\lambda).\end{equation}
Since 
\begin{equation} \det \Psi(x,\lambda) = [\psi^-(0,\lambda), \psi^+(0,\lambda)] = \frac{\rho(\lambda) - \rho(\lambda)^{-1}}{y_2(T,\lambda)} = -\frac{\sqrt{\Delta(\lambda)^2-4}}{y_2(T,\lambda)}, \end{equation}
we conclude that $\det(\Phi(x,\lambda)) = 1$ for all $x,\lambda$ by (\ref{eq:detPhi}).
Comparison of $\det(\Phi) \equiv 1$ with the asymptotic behavior (\ref{eq:assphi}) implies that
\begin{equation} \label{eq:assdetA} \det\left( \begin{pmatrix}1 & 1 \\ -i\sqrt{\lambda} & i \sqrt{\lambda}  \end{pmatrix} B(\lambda) \right) = 1 + O(\sqrt{\lambda}^{-1}) \end{equation}
as $\lambda \to \infty$ for $\lambda \in \Omega_s$ for some $0 < s < \tfrac{\pi}{8}$.

Let $\tilde \Phi$ be an arbitrary solution to Riemann--Hilbert problem \ref{rhp:Phi} and let $f(x,\lambda) = \det(\tilde \Phi(x,\lambda))$ which is holomorphic in $\mathbb{C} \setminus \mathbb{R}$.
Using $\det(V(x,\lambda)) = 1$ we find that $f$ satisfies
\begin{equation}
f_+(x,\lambda) = \det(\tilde \Phi_+(x,\lambda)) = \det(V(x,\lambda) \tilde \Phi_-(x,\lambda)) = \det(\tilde \Phi_-(x,\lambda)) = f_-(x,\lambda)
\end{equation}
for $\lambda \in \mathbb{R} \setminus \{E_j\}$.
Therefore $f(x,\lambda)$ extends to a holomorphic function of $\lambda \in \mathbb{C} \setminus \{E_j\}$.
Since $\tilde \Phi$ has at worst quartic singularities at $E_j$, $f$ has at worst square root singularities on $E_j$.
However, isolated singularities of a holomorphic function have at least order one, therefore $f$ extends to an entire function.

To allow us to use the Phragm\'{e}n--Lindel\"{o}f theorem we will establish a bound of the form $|f (x, \lambda)| \le M e^{c |\lambda|^2}$ for $\lambda \in \mathbb{C}$.
In terms of the notation introduced in section \ref{sec:PL} we need to show $f \in \mathcal{A}_2(\mathbb{C})$.
From the Phr\'{a}gmen--Lindel\"{o}f bound of Riemann--Hilbert problem \ref{rhp:Phi} we know that
\begin{equation}
\phi_{ij}, \tilde \phi_{ij} \in \mathcal{A}_2 \left(\mathbb{C} \setminus \left( \mathbb{R}^+ \cup \bigcup_{k = 0}^\mathcal{G} \overline{D}_{n_k} \right) \right)
\end{equation}
and therefore by proposition (\ref{prop:pl2}) we know that
\begin{equation}
f \in \mathcal{A}_2 \left(\mathbb{C} \setminus \left( \mathbb{R}^+ \cup \bigcup_{k = 0}^\mathcal{G} \overline{D}_{n_k} \right)\right).
\end{equation}
However, since $f$ is entire we must have
\begin{equation}
f \in \mathcal{A}_2 \left(\mathbb{C} \setminus  \bigcup_{k = 0}^\mathcal{G} \overline{D}_{n_k} \right).
\end{equation}
by continuity.
An application of proposition \ref{prop:pl3} then implies $f \in \mathcal{A}_2 (\mathbb{C})$ as desired.

Comparing (\ref{eq:assphi}) with (\ref{eq:assdetA}) we find the asymptotic description
\begin{equation}
\label{eq:assf} f(x,\lambda) = 1 + O(\sqrt{\lambda}^{-1})
\end{equation}
as $\lambda \to \infty$ for $\lambda \in \Omega_s$.
Since $f(x,\lambda)$ is entire, the asymptotic description (\ref{eq:assf}) implies that $f(x,\lambda)$ is bounded for $\lambda \in \Omega_{s}$.
Let us pick $s'$ such that $0 < s < s'< \pi/8$, then $f(x,\lambda)$ extends continuously to $\overline{\Upsilon}_{-s',s'}$, is bounded on $\partial \Upsilon_{-s',s'}$ and satisfy a bound of the form $f (x, \lambda) \le M e^{c |\lambda|^2}$ for $\lambda \in \Upsilon_{-s',s'}$.
Therefore, $f (x,\lambda)$ is bounded in $\Upsilon_{-s',s'}$ by the Phragm\'{e}n--Lindel\"{o}f theorem \ref{thm:PL}.
Recall that the sectors $\Upsilon_{\alpha,\beta}$ were defined in Theorem \ref{thm:PL}. 
$f(x,\lambda)$ is entire and bounded for $\lambda \in \mathbb{C} = \Omega_{s'} \cup \Upsilon_{-s',s'}$, so $f(x,\lambda)$ is constant by Liouville's theorem.
Since $f(x,\lambda)$ is constant for all $x$, (\ref{eq:assf}) implies $f \equiv 1$.
\end{proof}

\begin{thm} \label{thm:rhpPhi}
Let $x \in \mathbb{R}$ be fixed, then $\tilde \Phi(x,\lambda)$ solves Riemann--Hilbert problem \ref{rhp:Phi} if and only if
\begin{equation} \label{eq:solspacePhi}  \Phi (x,\lambda) = \begin{pmatrix} 1 & 0 \\ \alpha & 1 \end{pmatrix} \tilde \Phi (x, \lambda) \end{equation}
for some $\alpha$ that is constant in $\lambda$.
Moreover,
\begin{equation} \label{eq:solsfromRHP} \psi^-(x,\lambda) = b_{11}(\lambda)^{-1} e^{-i\sqrt{\lambda} x} \phi_{11}(x,\lambda), \quad \psi^+(x,\lambda) = b_{22} (\lambda)^{-1} e^{i\sqrt{\lambda} x} \phi_{12} (x,\lambda)  \end{equation} solve Hill's equation
\begin{equation} - {\psi^\pm}''(x,\lambda) + u(x) \psi^\pm(x,\lambda) = \lambda \psi^\pm(x,\lambda),\end{equation} with potential $u(x)$ given in terms any solution $\Phi(x,\lambda)$ of Riemann--Hilbert problem \ref{rhp:Phi} by
\begin{equation} \label{eq:reconstructionformulapotential} u(x) = 2 i \frac{\partial}{\partial x} \lim_{\lambda \to \infty} \sqrt{\lambda} \left( 1 - b_{11}(\lambda)^{-1} \phi_{11} (x,\lambda)  \right).\end{equation}
\end{thm}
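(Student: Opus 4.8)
The plan is to run a standard Riemann--Hilbert uniqueness argument, using the two facts already in hand: that the matrix $\Phi$ built in (\ref{eq:defPhi}) solves Riemann--Hilbert problem \ref{rhp:Phi}, and that every solution of that problem has determinant identically $1$. Given an arbitrary solution $\tilde\Phi$, I would form the ratio $R(x,\lambda):=\Phi(x,\lambda)\tilde\Phi(x,\lambda)^{-1}$; this is legitimate because $\det\tilde\Phi\equiv1$, so that $\tilde\Phi^{-1}$ equals the adjugate of $\tilde\Phi$. On $\mathbb{R}^+\setminus\{E_j\}_{j=0}^{2\mathcal{G}}$ the common jump relation of condition~3 gives $R_+=\Phi_+\tilde\Phi_+^{-1}=\Phi_-VV^{-1}\tilde\Phi_-^{-1}=R_-$, so $R$ continues holomorphically across the bands and gaps. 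Near each $E_j$ the entries of $\Phi$ and of the adjugate of $\tilde\Phi$ have at worst quartic-root singularities (condition~2), so $R$ is holomorphic in a punctured neighbourhood of $E_j$ and is dominated there by $|\lambda-E_j|^{-1/2}$; an isolated singularity of that strength is removable, and $R$ extends to an entire matrix-valued function of $\lambda$.

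Next I would pin down $R$ by combining the growth bound of condition~5 with the asymptotics of condition~4. Since $\phi_{ij},\tilde\phi_{ij}\in\mathcal{A}_2(\mathcal{D})$ and $\tilde\Phi^{-1}$ is the adjugate of $\tilde\Phi$, Proposition \ref{prop:pl2} gives $R_{ij}\in\mathcal{A}_2(\mathcal{D})$; because $R$ is entire this upgrades by continuity and then by Proposition \ref{prop:pl3} to $R_{ij}\in\mathcal{A}_2(\mathbb{C})$, exactly as in the proof that $\det\Phi\equiv1$. Fixing $s<\tfrac{\pi}{8}$ admissible for both $\Phi$ and $\tilde\Phi$, the key point is that the factors $B(\lambda)$ in (\ref{eq:assphi}) cancel in the product, so that
\[ R(x,\lambda)=\begin{pmatrix}1&1\\ -i\sqrt\lambda & i\sqrt\lambda\end{pmatrix}\bigl(I+O(\sqrt\lambda^{-1})\bigr)\begin{pmatrix}1&1\\ -i\sqrt\lambda & i\sqrt\lambda\end{pmatrix}^{-1} \]
as $\lambda\to\infty$ in $\Omega_s$. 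A short computation of the right-hand side gives $r_{11}=1+O(\sqrt\lambda^{-1})$, $r_{12}=O(\lambda^{-1})$, $r_{22}=1+O(\sqrt\lambda^{-1})$, and $r_{21}=O(1)$, so every entry of $R$ is bounded on $\Omega_s$. Applying the Phragm\'{e}n--Lindel\"{o}f theorem \ref{thm:PL} on sectors $\Upsilon_{-s',s'}$ with $s<s'<\tfrac{\pi}{8}$, and then Liouville's theorem, shows that each $r_{ij}$ is constant in $\lambda$; comparing with the limits above forces $R=\begin{pmatrix}1&0\\ \alpha&1\end{pmatrix}$ with $\alpha$ independent of $\lambda$ (but possibly depending on $x$), i.e.\ $\Phi=\begin{pmatrix}1&0\\ \alpha&1\end{pmatrix}\tilde\Phi$. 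This is the forward implication, and along the way it shows that the first row of every solution coincides with the first row of $\Phi$. The converse is a direct check that, for any constant $\alpha$, the matrix $\begin{pmatrix}1&0\\ -\alpha&1\end{pmatrix}\Phi$ again satisfies conditions~1--5; the only point needing a word is condition~4, where one uses the identity $\begin{pmatrix}1&0\\ -\alpha&1\end{pmatrix}\begin{pmatrix}1&1\\ -i\sqrt\lambda & i\sqrt\lambda\end{pmatrix}=\begin{pmatrix}1&1\\ -i\sqrt\lambda & i\sqrt\lambda\end{pmatrix}\bigl(I+O(\sqrt\lambda^{-1})\bigr)$.

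For the reconstruction formulas I would work with the distinguished solution (\ref{eq:defPhi}) and simply unwind $\Phi=\Psi B e^{i\sigma_3\sqrt\lambda x}$ entrywise. Since $B$ is diagonal, the first row reads $\phi_{11}=b_{11}(\lambda)e^{i\sqrt\lambda x}\psi^-(x,\lambda)$ and $\phi_{12}=b_{22}(\lambda)e^{-i\sqrt\lambda x}\psi^+(x,\lambda)$, which rearrange into (\ref{eq:solsfromRHP}); because the first row is the same for every solution of the problem, (\ref{eq:solsfromRHP}) holds when $\Phi$ is replaced by any solution, and it always produces the genuine Bloch--Floquet functions $\psi^\pm$, which solve Hill's equation by the construction in Section~2. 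For the potential, (\ref{eq:solsfromRHP}) gives $1-b_{11}(\lambda)^{-1}\phi_{11}(x,\lambda)=1-e^{i\sqrt\lambda x}\psi^-(x,\lambda)$, and the first-order asymptotics of $\psi^-$ in Lemma \ref{lma:BFass} yield
\[ \sqrt\lambda\,\Bigl(1-b_{11}(\lambda)^{-1}\phi_{11}(x,\lambda)\Bigr)=\frac{1}{2i}\int_0^x u(t)\,dt+O(\sqrt\lambda^{-1}) \]
as $\lambda\to\infty$ in $\Omega_s$; passing to the limit for each fixed $x$ and then differentiating the resulting explicit function of $x$ produces (\ref{eq:reconstructionformulapotential}).

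The substantive difficulty is the passage from $\Omega_s$ to the whole plane: condition~4 holds only on $\Omega_s$, which excludes a sector about $\mathbb{R}^+$, and the sole role of the $\mathcal{A}_2$ bound in condition~5 is to feed the Phragm\'{e}n--Lindel\"{o}f theorem so that boundedness of the $r_{ij}$ on $\Omega_s$ propagates into that bad sector. This is the same mechanism already used to prove $\det\Phi\equiv1$, so the genuinely new content is largely bookkeeping: tracking that $r_{21}$ is the only entry of $R$ that can survive as a nonzero constant, which is exactly what pins the residual freedom down to a lower-triangular unipotent left factor. A secondary point requiring care is the removability of the singularities of $R$ at the $E_j$, which rests on $\det\tilde\Phi\equiv1$ (so that $\tilde\Phi^{-1}$ is the adjugate and inherits at-worst-quartic-root singularities) together with the elementary fact that an isolated singularity dominated by $|\lambda-E_j|^{-1/2}$ is removable.
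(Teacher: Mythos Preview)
Your proposal is correct and follows essentially the same route as the paper's proof: form the ratio $R=\Phi\tilde\Phi^{-1}$ via the adjugate (using $\det\tilde\Phi\equiv1$), remove the jumps and the sub-square-root singularities at the $E_j$ to get an entire function, push the $\mathcal{A}_2$ bound to all of $\mathbb{C}$ via Propositions \ref{prop:pl2} and \ref{prop:pl3}, compute the conjugation asymptotics in $\Omega_s$ (the $B(\lambda)$ factors cancel), and finish with Phragm\'{e}n--Lindel\"{o}f and Liouville to force $R=\begin{pmatrix}1&0\\\alpha&1\end{pmatrix}$; the reconstruction formulas then follow by unwinding (\ref{eq:defPhi}) and invoking Lemma~\ref{lma:BFass}. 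Your explicit treatment of the converse direction and the sharper $r_{12}=O(\lambda^{-1})$ estimate are minor additions, not a different approach.
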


\begin{proof} Fix $x \in \mathbb{R}$ and let $\Phi$ and $\tilde \Phi$ solve Riemann--Hilbert problem \ref{rhp:Phi} and consider the ratio
\begin{equation} R(x, \lambda) = \Phi(x,\lambda) \tilde \Phi(x,\lambda)^{-1}. \end{equation}
Since $\det(\tilde \Phi (x,\lambda)) = 1$, Cramer's rule allows us to write $\tilde \Phi (x,\lambda)^{-1}$ in terms of the entries of $\tilde \Phi(x,\lambda)$ as
\begin{equation} \tilde \Phi (x,\lambda)^{-1} = \begin{pmatrix} \tilde \phi_{22} (x,\lambda) & -\tilde \phi_{12} (x,\lambda) \\ - \tilde \phi_{21} (x,\lambda) & \tilde \phi_{11} (x,\lambda) \end{pmatrix}. \end{equation}
$R(x, \lambda)$ has no jumps, and thus extends to a holomorphic function on $\mathbb{C} \setminus \{E_n\}_{n = 0}^{2 \mathcal{G}}$.
Moreover, $R(x,\lambda)$ has at worst square root singularities at $E_n$, because the entries in $\Phi(x,\lambda)$ and $\tilde \Phi(x,\lambda)$ have at worst quartic root singularities at $E_n$.
However, isolated singularities of a holomorphic function can only be poles or essential singularities.
Therefore $R(x,\lambda)$ is in fact an entire function of $\lambda$.

To allow us to use the Phragm\'{e}n--Lindel\"{o}f theorem we will establish a bound of the form $|r_{ij} (x, \lambda)| \le M e^{c |\lambda|^2}$ for $\lambda \in \mathbb{C}$.
In terms of the notation introduced in section \ref{sec:PL} we need to show $r_{ij} \in \mathcal{A}_2(\mathbb{C})$.
From the Phr\'{a}gmen--Lindel\"{o}f bound of Riemann--Hilbert problem \ref{rhp:Phi} we know that
\begin{equation}
\phi_{ij}, \tilde \phi_{ij} \in \mathcal{A}_2 \left(\mathbb{C} \setminus \left( \mathbb{R}^+ \cup \bigcup_{k = 0}^\mathcal{G} \overline{D}_{n_k} \right) \right),
\end{equation}
and therefore by proposition (\ref{prop:pl2}) we know that
\begin{equation}
r_{ij} \in \mathcal{A}_2 \left(\mathbb{C} \setminus \left( \mathbb{R}^+ \cup \bigcup_{k = 0}^\mathcal{G} \overline{D}_{n_k} \right)\right).
\end{equation}
However, since $R$ is entire we must have
\begin{equation}
r_{ij} \in \mathcal{A}_2 \left(\mathbb{C} \setminus  \bigcup_{k = 0}^\mathcal{G} \overline{D}_{n_k} \right).
\end{equation}
by continuity.
An application of proposition \ref{prop:pl3} then implies $r_{ij} \in \mathcal{A}_2 (\mathbb{C})$ as desired.

The solutions $\Phi(x,\lambda)$ and $\tilde \Phi(x,\lambda)$ have asymptotic descriptions of the form (\ref{eq:assphi}) valid in $\Omega_s$ and $\Omega_{\tilde s}$ respectively for some $0 < s,\tilde s < \tfrac{\pi}{8}$.
Inverting the asymptotic description (\ref{eq:assphi}) for $\tilde \Phi$ gives
\begin{equation} \label{eq:asstphi} \tilde \Phi(x,\lambda)^{-1} = B(\lambda)^{-1} (I + O(\sqrt{\lambda}^{-1})) \frac{1}{2i \sqrt{\lambda}} \begin{pmatrix} i \sqrt{\lambda} & -1 \\ i \sqrt{\lambda} & 1 \end{pmatrix}  \end{equation}
as $\lambda \to \infty$ valid for $\lambda \in \Omega_{\tilde s}$. 
Multiplying (\ref{eq:assphi}) by (\ref{eq:asstphi}) gives the asymptotic desctiption
\begin{align}  R(x,\lambda) & = \begin{pmatrix} 1 & 1 \\ -i \sqrt{\lambda} & i \sqrt{\lambda} \end{pmatrix}   (I + O(\sqrt{\lambda}^{-1})) \frac{1}{2i \sqrt{\lambda}} \begin{pmatrix} i \sqrt{\lambda} & -1 \\ i \sqrt{\lambda} & 1 \end{pmatrix} \\
\label{eq:Rass} & = \begin{pmatrix} 1 + O(\sqrt{\lambda}^{-1}) & O(\sqrt{\lambda}^{-1}) \\ O(1) & 1 + O(\sqrt{\lambda}^{-1}) \end{pmatrix} \end{align}
as $\lambda \to \infty$ valid for $\lambda \in \Omega_{s'}$ where $0 < s' = \min\{s,\tilde s\} < \tfrac{\pi}{8}$.
Since $R(x,\lambda)$ is entire, the asymptotic description (\ref{eq:Rass}) implies that the entries in $R(x,\lambda)$ are bounded for $\lambda \in \Omega_{s'}$.
Let us choose ${s' < s'' < \tfrac{\pi}{8}}$, then the entries of $R(x,\lambda)$ extend continuously to $\overline{\Upsilon}_{-s'',s''}$, are bounded on $\partial \Upsilon_{-s'',s''}$ and satisfy a bound of the form $|r_{ij} (x, \lambda)| \le M e^{c |\lambda|^2}$ for $\lambda \in \Upsilon_{-s'',s''}$.
Therefore, the entries of $R (x,\lambda)$ are bounded in $\Upsilon_{-s'',s''}$ by the Phragm\'{e}n--Lindel\"{o}f theorem \ref{thm:PL}.
Recall that the sectors $\Upsilon_{\alpha,\beta}$ were defined in Theorem \ref{thm:PL}. 
$R(x,\lambda)$ is entire with bounded entries for $\lambda \in \mathbb{C} = \Omega_{s'} \cup \Upsilon_{-s'',s''}$, so $R(x,\lambda)$ is constant by Liouville's theorem.
The asymptotic description (\ref{eq:Rass}) then implies 
\begin{equation} R(x,\lambda) \equiv \begin{pmatrix} 1 & 0 \\ \alpha & 1 \end{pmatrix} \implies \Phi(x,\lambda) = \begin{pmatrix} 1 & 0 \\ \alpha & 1 \end{pmatrix} \tilde \Phi (x,\lambda).  \end{equation}

The the reconstruction of $u$ in Theorem \ref{thm:rhpPhi} from the particular solution given by (\ref{eq:defPhi}) follows from lemma {\ref{lma:BFass}} because $b_{11}(\lambda)^{-1} \phi_{11}(x,\lambda) e^{i \sqrt{\lambda} x} = \psi^-(x,\lambda)$ {and $\psi^-$ has the asymptotic description (\ref{eq:asspsi})}.
The fact that the reconstruction of $u(x)$ does not depend on the choice of solution to Riemann--Hilbert problem \ref{rhp:Phi} follows from the fact that the mappings of the space of $2 \times 2$ matrices to itself given by
\begin{equation} M \to \tilde M = \begin{pmatrix} 1 & 0 \\ \alpha & 1 \end{pmatrix} M \end{equation}
leaves the first row of $M$ invariant while $M \to \tilde M B(\lambda)^{-1}$ only scales the upper left most entry (i.e. $\tilde m_{11} = b_{11}(\lambda)^{-1} m_{11}$).
The fact that $\psi^-(x,\lambda)$ and $\psi^+(x,\lambda)$ defined by (\ref{eq:solsfromRHP}) solve Hill's equation can be verified by observing how the first row of the particular solution $\Phi$ to Riemann--Hilbert problem \ref{rhp:Phi} defined by (\ref{eq:defPhi}) is built in terms of the Bloch--Floquet solutions. \end{proof}

 \begin{rmk} \label{rmk:1}
To define $\Phi$ using $B$ and $\Psi$ as in (\ref{eq:defPhi}), as well as to define $\breve \Phi$ by (\ref{eq:defbPhi}) in the next section, we only need to assume: (1) $E_{0} < E_{1} < \cdots $; $R = \max_{j} |E_{2j}-E_{2j-1}| < \infty$; (2) if $\mathcal{G}$ is infinite then there exists $N, C$ such that  $E_{n} > C n^{2}$ for all $n > N$;  and (3) if $\mathcal{G}$ is infinite then there exists $K$ such that the discs $D_{n_k}$ of radius $R$ centered at $(E_{2k}+E_{2k-1})/2$ are disjoint for $k \ge K$.
Extra regularity assumptions on $u$ add additional constraints on the decay rate of $|E_{2j}-E_{2j-1}|$ as $j \to \infty$.
\end{rmk}

\begin{rmk}
\label{rmk:quasiperiod}
Consider Riemann--Hilbert problem \ref{rhp:Phi}, however instead of assuming the spectral data is determined by a periodic potential we now assume the spectral data is arbitrarily chosen to consist of, a sequence $\{E_{k}\}_{k=0}^{2 \mathcal{G}+1}$, a sequence $\{\mu_{n_k} \}_{k= 1}^{\mathcal{G}}$ and a sequence $\{\sigma_{n_k}\}_{k=1}^{\mathcal{G}}$ ($\mathcal{G}$ could be finite or infinite), satisfying
\begin{enumerate}
\item $E_{0} < E_{1} < \cdots $,
\item $\mu_{n_k} \in [E_{2k-1},E_{2k}]$,
\item $R = \max_{j} |E_{2j}-E_{2j-1}| < \infty$, 
\item for each $k$, $\sigma_{n_k} \in \{ -1,0,1\}$.
\item In case $\mathcal{G}$ is infinite:  there exists $N, C$ such that  $E_{n} > C n^{2}$ for all $n > N$.
\item In case $\mathcal{G}$ is infinite: there exists $K$ such that the discs $D_{n_k}$ of radius $R$ centered at ${(E_{2k}+E_{2k-1})/2}$ are disjoint for $k \ge K$.
\end{enumerate}
Since in this case we do not know that Riemann-Hilbert problem \ref{rhp:Phi} comes from a periodic potential, we do not know a-priori that a solution exists.  However, if a solution to Riemann--Hilbert problem \ref{rhp:Phi} does exists, then the first row of the solution is unique.
Moreover, if we also know that the solution is differentiable with respect to $x$ (with asymptotic expansion as $\lambda \to \infty$ obtained by differentiating the expansion of the solution with respect to $x$), then $\psi^-(x,\lambda) = b_{11}(\lambda)^{-1} e^{-i\sqrt{\lambda} x} \phi_{11}(x,\lambda) $ and ${\psi^+(x,\lambda) = b_{22} (\lambda)^{-1} e^{i\sqrt{\lambda} x} \phi_{12}(x,\lambda)}$ solve Hill's equation
\begin{equation} - {\psi^\pm}''(x,\lambda) + u(x) \psi^\pm(x,\lambda) = \lambda \psi^\pm(x,\lambda),\end{equation} with potential $u(x)$ given in terms the first row of any solution $\Phi(x,\lambda)$ of Riemann--Hilbert problem \ref{rhp:Phi} by
\begin{equation} \label{eq:reconstructionformulapotential} u(x) = 2 i \frac{\partial}{\partial x} \lim_{\lambda \to \infty} \sqrt{\lambda} \left( 1 -   b_{11}(\lambda)^{-1}\phi_{11} (x,\lambda) \right).\end{equation}
The potential $u$ so determined is most likely not periodic. By introducing time dependence in the Riemann-Hilbert problem (as discussed in Section 6), we therefore obtain a characterization of solutions of the KdV equation that are outside the class of periodic solutions.
\end{rmk}

\begin{rmk} If instead of considering a Riemann--Hilbert problem for $\Phi$ defined by (\ref{eq:defPhi}) we considered a Riemann--Hilbert problem for the function
\begin{equation} \Xi(x,\lambda) = \Psi(x,\lambda) B(\lambda) \rho(\lambda)^{\frac{x}{T} \sigma_3} \end{equation}
then the only change to Riemann--Hilbert problem \ref{rhp:Phi} would be a new jump matrix
\begin{equation} \tilde V (\lambda) := \begin{cases} (-1)^{k + m(\lambda) - 1} \begin{pmatrix} 0 & i \frac{f^+(\lambda)}{f^-(\lambda)} \\ i \frac{f^-(\lambda)}{f^+(\lambda)} & 0 \end{pmatrix} & \lambda \in (E_{2k-2},E_{2k-1})  \\ (-1)^{k + m(\lambda)} e^{\frac{2 i n_k \pi}{T} \sigma_3 x}& \lambda \in (E_{2k-1},E_{2k}) \end{cases}. \end{equation}

Analogues of the existence and uniqueness results of this section for $\Xi$ can be proven by minor modification to the proofs.
Advantages of considering the Riemann--Hilbert problem for $\Xi$ include: (1) the jump matrices are constant (but different) on each gap, (2) if all of the Dirichlet eigenvalues lie on the band edge, then $\tilde V$ is piecewise constant for all $\lambda \in [0,\infty)$, and (3) $\Xi$ has the same asymptotic behavior as $\Phi$ for $\lambda \to \infty$ with $\lambda \in \Omega_{s}$.  However, the definition of $\tilde V$ requires knowledge of degenerated gaps (to determine the $n_{k}$s), while the definition of $V$ does not.
\end{rmk}

We will now provide a little more detail on the derivation of the jump matrix for $\Xi(x,\lambda)$.
We consider the quantity $\log(\rho(\lambda))$, and provide a precise definition as follows.  
Since $\rho(\lambda)$ is nonzero in the simply connected domain $\mathbb{C} \setminus \mathbb{R}^+$ and $\rho(\lambda) \to 1$ as $\lambda \to 0$ in $\mathbb{C} \setminus \mathbb{R}^+$, we can define $\log(\rho(\lambda))$ by 
\begin{equation} \label{eq:logrhointegraldef} \log(\rho(\lambda)) = \int_0^\lambda \frac{\rho_\lambda(\lambda)}{\rho(\lambda)}  d \lambda \end{equation}
where the contour of integration from $0$ to $\lambda$ is contained in $\mathbb{C} \setminus \mathbb{R}^+$ (except of course at the endpoint $0$).
The boundary values $\log(\rho(\lambda))_\pm$ are obtained by computing the integral (\ref{eq:logrhointegraldef}) along either the top or bottom of $\mathbb{R}^+$.

The logarithmic derivative of $\rho$ is computed by differentiating $\rho(\lambda)^2 - \Delta(\lambda) \rho(\lambda) + 1 = 0$ by $\lambda$ to get
\begin{equation} \label{eq:logderrhoform} \rho_{\lambda} (\lambda) (2 \rho(\lambda) - \Delta(\lambda)) = -\Delta_{\lambda} (\lambda) \rho(\lambda) \implies \frac{\rho_\lambda (\lambda)}{\rho(\lambda)} = -\frac{\Delta_\lambda(\lambda)}{\rho(\lambda) - \rho(\lambda)^{-1}} = \frac{\Delta_\lambda(\lambda)}{\sqrt{\Delta(\lambda)^2-4}} \end{equation}
where we have used the fact that $\Delta(\lambda) = \rho(\lambda) + \rho(\lambda)^{-1}$ and $\sqrt{\Delta(\lambda)^2-4} = \rho(\lambda)^{-1} - \rho(\lambda)$.
It is shown in \cite{MaWi79} that $\Delta_\lambda(\lambda) < 0$ for $\lambda \in (\lambda_{4n},\lambda_{4n+1})$ and $\Delta_\lambda(\lambda)>0$ for $\lambda \in (\lambda_{4n+2},\lambda_{4n+3})$, $n = 0,1,2,\dots$. From (\ref{eq:expansionsqrtflo}) we see that $(\Delta_\lambda(\lambda)/\sqrt{\Delta(\lambda)^2-4})_+$ is purely imaginary with positive imaginary part in interior of the spectrum, $(\Delta_\lambda(\lambda)/\sqrt{\Delta(\lambda)^2-4})_-$ is purely imaginary with negative imaginary part in interior of the spectrum, and $(\Delta_\lambda(\lambda)/\sqrt{\Delta(\lambda)^2-4})_\pm$ are real and agree in the spectral gaps.
Using the fact that $\rho(\lambda) = \pm 1$ if and only if $\lambda$ is a periodic/antiperiodic eigenvalue, we see that $\text{Im} \log(\rho(\lambda))_+ = i n \pi$ and $\text{Im} \log(\rho(\lambda))_- = -i n \pi$ for $\lambda \in [\lambda_{2n-1},\lambda_{2n}]$.
Combining the fact the $|\rho_\pm(\lambda)|=1$ in the spectrum, with the reality and single valuedness of the logarithmic derivative of $\rho$ in the gaps, tells us $\text{Re}\log(\rho(\lambda))_+ = \text{Re}\log(\rho(\lambda))_-$ for $\lambda \in \mathbb{R}^+$ and they both vanish for $\lambda \in \sigma(L)$.
These facts give us the control on $\log(\rho(\lambda))$ necessary to establish the jump of $\Xi$ on the gaps.
The $(-1)^{k + m(\lambda)}$ appears in this jump due to $B(\lambda)$.

From the integration formula (\ref{eq:logrhointegraldef}), it is also clear that $\log(\rho(\lambda))_+ + \log(\rho(\lambda))_- = 0$ for $\lambda \in \sigma(L)$.
Since the jump of $B(\lambda) \Psi(x,\lambda)$ is off diagonal for $\lambda \in \sigma(L)$, $\rho_+(\lambda)$ and $\rho_-(\lambda)$ appear in the jump matrix for $\Xi(x,\lambda)$ for $\lambda \in \sigma(L)$ in terms of the combination ${\rho_+(\lambda)^{\frac{x}{T}} \rho_-(\lambda)^{\frac{x}{T}} = e^{(\log(\rho(\lambda))_+ + \log(\rho(\lambda))_-)\frac{x}{T}} = 1}$.
For this reason, boundary values of $\rho_\pm(\lambda)$ do not appear in the jump matrix for $\Xi(x,\lambda)$ for $\lambda \in \sigma(L)$.
The jump of $\Xi(x,\lambda)$ for $\lambda \in \sigma(L)$ therefore matches the jump for $\Phi$.

\section{Cauchy Problem for the Periodic KdV Equation}

To make computations in this section simpler we will no longer work with potentials $u$ in $L^\infty(\mathbb{R})$. Instead we will assume enough regularity to take all the derivatives required for the following arguments.
The KdV equation
\begin{equation}
u_t - 6 u u_x + u_{xxx} = 0
\end{equation}
is equivalent to the Lax equation { \cite{Lax}}
\begin{equation} \label{eq:Lax}
L_t = [A,L]
\end{equation}
where
\begin{equation}
L = -\frac{\partial^2}{\partial x^2} + u(x,t)
\end{equation}
and
\begin{equation}
A = -4 \frac{\partial^3}{\partial x^3} + 6 u(x,t) \frac{\partial}{\partial x} + 3 u_x(x,t).
\end{equation}
We assume the the initial condition $u_0(x) = u(x,0)$ is smooth and periodic, $u_0(x+T) = u_0(x)$.
We can use well known results for the KdV to guarantee existence and uniqueness of smooth periodic solutions to the KdV equation for all time for any smooth periodic initial condition \cite{MckTr76, MckTr78}.
It follows that $u(\cdot,t)$ is smooth and periodic as well, with $u(x+T,t) = u(x,t)$ for all times $t$.
Moreover, the spectrum and periodic/antiperiodic spectra of $L$ are preserved under the flow.
In particular, the Floquet discriminant and thus Floquet multipliers are constant for all time.

However, the Dirichlet eigenvalues are not constant, so if we consider Riemann--Hilbert problem \ref{rhp:Phi} for the potentials $u(\cdot,t_1)$ and $u(\cdot,t_2)$ the band ends $E_j$ will stay the same, but the  Dirichlet eigenvalues could vary.
Therefore $\mu_{n_k}$ and $\sigma_{n_k}$ pick up a $t$ dependence, and we write $\mu_{n_k}(t)$ and $\sigma_{n_k}(t)$ for the Dirichlet divisor and signature for the potential $u(t)$ at time $t$.
The functions $f^0(\lambda)$, $f^\pm(\lambda)$ and $B(\lambda)$ are as defined in definitions \ref{defn:regularize}, and we will write $f^0(t,\lambda)$, $f^\pm(t,\lambda)$ and $B(t,\lambda)$.
In particular the function $B(t,\lambda)$ appearing in the asymptotic description of $\Phi$ will evolve in time.

While the quantities $\psi^{\pm}(x, t, \lambda)$ do not simultaneously solve both differential equations in the Lax pair, in this section we use the Lax pair to deform them in such a way that the new quantities, $\breve{\psi}^{\pm}(x,t,\lambda)$ do solve both differential equations.  Moreover, the poles of $\breve{\psi}^{\pm}(x,t,\lambda)$ are also independent of time.  And, finally, these are explicitly related to Riemann--Hilbert problem \ref{rhp:KdV}, in which the dependence on $x$ and $t$ is completely explicit.   In other words, in this section we establish a Riemann-Hilbert characterization of the KdV equation for smooth periodic infinite gap initial conditions.  The processes described in this paragraph was previously established for periodic finite gap initial conditions using the theory of Riemann Surfaces in \cite{Dubrovin75,ItsMatv75a,ItsMatv75b,Noetal84}.

\begin{prop} The time dependent Dirichlet eigenvalues evolve according to Dubrovin's equation
\begin{align}\mu_{nt}(t) &= -(4 \mu_n(t) + 2 u(0,t)) \frac{y_{2x}(T,t,\mu_n(t)) - y_1(T,t,\mu_n(t))}{y_{2\lambda}(\mu_n(t))} \\ \label{eq:xdubrovin} &=-\sigma_n(t)(4 \mu_n(t) + 2 u(0,t)) \frac{\rho(\mu_n(t)) - \rho(\mu_n(t))^{-1}}{y_{2\lambda}(\mu_n(t))}.\end{align} \end{prop}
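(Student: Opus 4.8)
The plan is to differentiate the defining identity $y_2(T,t,\mu_n(t)) = 0$ in $t$. Since the zeros of the entire function $\lambda \mapsto y_2(T,t,\lambda)$ are simple (Section~2) and $u$ is smooth in $(x,t)$, the implicit function theorem shows that $\mu_n(t)$ is smooth, that $y_{2\lambda}(T,t,\mu_n(t)) \neq 0$, and that $\mu_{nt}(t) = -\,y_{2t}(T,t,\mu_n(t))/y_{2\lambda}(T,t,\mu_n(t))$. All the substance lies in evaluating the numerator via the Lax pair.

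First I would work with the Dirichlet eigenfunction $\phi(x,t) := y_2(x,t,\mu_n(t))$, which satisfies $L\phi = \mu_n(t)\phi$ pointwise in $x$, $\phi(0,t)=\phi(T,t)=0$ for every $t$, and the normalization $\phi_x(0,t)=1$. Differentiating $L\phi=\mu_n(t)\phi$ in $t$, using the Lax equation $L_t=[A,L]$ (with $L_t$ the operator of multiplication by $u_t$) and then $L\phi=\mu_n\phi$, yields the inhomogeneous equation
\begin{equation}
\bigl(L - \mu_n(t)\bigr)\bigl(\phi_t - A\phi\bigr) = \mu_{nt}(t)\,\phi .
\end{equation}
Because $\phi$ vanishes at $x=0$ and $x=T$ for all $t$, so does $\phi_t$; hence $g := \phi_t - A\phi$ has $g(0,t) = -A\phi(0,t)$ and $g(T,t) = -A\phi(T,t)$. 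Using $\phi_{xx} = (u-\mu_n)\phi$ to eliminate $\phi_{xxx}$ from $A\phi = -4\phi_{xxx} + 6u\phi_x + 3u_x\phi$, together with $\phi(0,t)=\phi(T,t)=0$, $\phi_x(0,t)=1$, $\phi_x(T,t)=y_{2x}(T,t,\mu_n)$, and the periodicity $u(T,t)=u(0,t)$, a short computation gives $A\phi(0,t) = 4\mu_n(t) + 2u(0,t)$ and $A\phi(T,t) = \bigl(4\mu_n(t) + 2u(0,t)\bigr)\,y_{2x}(T,t,\mu_n(t))$.

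Next I would pair the inhomogeneous equation with $\phi$ over $[0,T]$ by Green's identity. Since $(L-\mu_n)\phi = 0$ and $\phi$ vanishes at the endpoints,
\begin{equation}
\mu_{nt}(t)\int_0^T \phi^2\,dx = \int_0^T \bigl((L-\mu_n)g\bigr)\phi\,dx = \bigl[\,g\,\phi_x - g_x\,\phi\,\bigr]_0^T = -\bigl(4\mu_n + 2u(0,t)\bigr)\bigl(y_{2x}(T,t,\mu_n)^2 - 1\bigr) .
\end{equation}
The norm is handled by the classical $\lambda$-derivative identity: differentiating $y_{2,xx} = (u-\lambda)y_2$ in $\lambda$ and applying Green's identity with $y_2(0,\lambda)\equiv 0$, $y_{2,x}(0,\lambda)\equiv 1$ gives $\int_0^T y_2(x,\mu_n)^2\,dx = y_{2\lambda}(T,\mu_n)\,y_{2x}(T,\mu_n)$. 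Dividing, and using $\det Y(T,\mu_n) = y_1(T,\mu_n)\,y_{2x}(T,\mu_n) = 1$ so that $y_{2x}(T,\mu_n)^{-1} = y_1(T,\mu_n)$, produces the first displayed expression for $\mu_{nt}$. The second follows at once from Claim~\ref{clm:dirflo}, which gives $y_{2x}(T,\mu_n) = \rho(\mu_n)^{\sigma_n}$ and $y_1(T,\mu_n) = \rho(\mu_n)^{-\sigma_n}$, whence $y_{2x}(T,\mu_n) - y_1(T,\mu_n) = \sigma_n\bigl(\rho(\mu_n) - \rho(\mu_n)^{-1}\bigr)$ — an identity that also holds when $\sigma_n = 0$, both sides vanishing (so that the formula correctly predicts $\mu_{nt}=0$ when $\mu_n$ sits at a band end).

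The main obstacle is the endpoint computation of $A\phi$ and keeping signs straight in Green's identity — this is where the precise coefficient $4\mu_n + 2u(0,t)$ is produced — rather than anything structural; invoking $L_t=[A,L]$ and interchanging $\partial_t$ with $\partial_x$ and with evaluation at the moving point $\mu_n(t)$ are routine given the smoothness of $u$. A variant that avoids $L^2$ norms rewrites $A$ in $2\times 2$ first-order form $\tilde A(x,t,\lambda)$ (again using $\psi_{xx}=(u-\lambda)\psi$), notes $Y_t = \tilde A\,Y - Y\,\tilde A|_{x=0}$, and reads off the $(1,2)$-entry at $\lambda=\mu_n(t)$, yielding $y_{2t}(T,t,\mu_n) = \bigl(4\mu_n + 2u(0,t)\bigr)\bigl(y_{2x}(T,t,\mu_n) - y_1(T,t,\mu_n)\bigr)$ directly.
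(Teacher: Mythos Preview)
Your proof is correct. Your main route --- pairing the inhomogeneous equation $(L-\mu_n)(\phi_t-A\phi)=\mu_{nt}\phi$ against $\phi$ via Green's identity and invoking the Sturm--Liouville norm formula $\int_0^T y_2^2\,dx = y_{2\lambda}(T,\mu_n)\,y_{2x}(T,\mu_n)$ --- is genuinely different from the paper's. The paper instead works at \emph{fixed} $\lambda$: it shows $(L-\lambda)(y_{2t}-Ay_2)=0$, expands $y_{2t}-Ay_2 = c_1 y_1 + c_2 y_2$, determines $c_1,c_2$ from the normalizations at $x=0$, and then evaluates at $x=T$, $\lambda=\mu_n(t)$ to obtain $y_{2t}(T,t,\mu_n)=(4\mu_n+2u(0,t))(y_{2x}(T,\mu_n)-y_1(T,\mu_n))$ directly; the relation $y_{2t}(T,t,\mu_n)=-\mu_{nt}\,y_{2\lambda}(T,t,\mu_n)$ is then read off from the Hadamard product for $y_2(T,t,\lambda)$ rather than implicit differentiation. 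This is essentially the ``variant'' you sketch in your final paragraph. Your Green's-identity argument has a more spectral/perturbative flavor and buys you the result without ever writing down the $y_1$-coefficient, at the cost of needing the auxiliary norm identity; the paper's basis-expansion approach is slightly more direct and avoids any integration over $[0,T]$.
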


This equation was derived in the finite gap case by Dubrovin in \cite{Dubrovin75}.
The proof given here, which also applies to the infinite gap case, was originally given by Its and Matveev and appears in Russian in \cite{ItsMatv75b}.

\begin{proof}

The equation
\begin{equation}
L_t y_2(x,t,\lambda) + L y_{2t}(x,t,\lambda) = \lambda y_{2t}(x,t,\lambda).
\end{equation}
is derived by differentiating both sides of $L y_2(x,t,\lambda) = \lambda y_2(x,t,\lambda)$ by $t$.
Combining the Lax pair (\ref{eq:Lax}) equation and Hill's equation implies
\begin{equation} L_t y_2(x,t,\lambda) = A L y_2(x,t,\lambda) - L A y_2(x,t,\lambda) = \lambda A y_2(x,t,\lambda) - L A y_2(x,t,\lambda). \end{equation}
Therefore, $y_{2t}(x,t,\lambda) - A y_2(x,t,\lambda)$ solves Hill's equation
\begin{equation} L(y_{2t}(x,t,\lambda) - A y_2(x,t,\lambda)) = \lambda (y_{2t}(x,t,\lambda) - A y_2(x,t,\lambda)),  \end{equation}
so for fixed $t$ and $\lambda$ there exist constants $c_1,c_2$ such that
\begin{equation}y_{2t}(x,t,\lambda) - A y_2(x,t,\lambda) = c_1 y_1(x,t,\lambda) + c_2 y_2(x,t,\lambda).  \end{equation}
From the normalizations of $y_1$ and $y_2$ at $x = 0$ we find that $y_{2t}(0,t,\lambda) = y_{2xt}(0,t,\lambda) = 0$ and
\begin{equation} c_1 = - A y_2(0,t,\lambda), c_2 = - \frac{\partial}{\partial x} A y_2(0,t,\lambda). \end{equation}
From Hill's equation it follows that
\begin{equation} -y_{2xxx} (x,t,\lambda) = \lambda y_{2x}(x,t,\lambda) - u(x,t) y_{2x}(x,t,\lambda) - u_x(x,t) y_2(x,t,\lambda)   \end{equation}
and therefore
\begin{equation} A y_2(x,t,\lambda) = 4 \lambda y_{2x}(x,t,\lambda) + 2 u(x,t) y_{2x} (x,t,\lambda) - u_x(x,t,\lambda) y_2 (x,t,\lambda)  \end{equation}
and
\begin{align*} \numberthis \frac{\partial}{\partial x} A y_2(x,t,\lambda) =&  4 \lambda y_{2xx}(x,t,\lambda) + 2 u(x,t) y_{2xx} (x,t,\lambda) \\ & - u_{xx}(x,t) y_2(x,t,\lambda) + u_x(x,t) y_{2x}(x,t,\lambda). \end{align*}
Evaluation at $x = 0$ gives $c_1 = -(4 \lambda + 2 u(0,t))$ and $c_2 = - u_x(0,t)$.
We have therefore derived the following formula for $y_{2t}(x,t,\lambda)$
\begin{align} y_{2t}(x,t,\lambda) & = -(4 \lambda + 2 u(0,t))y_1(x,t,\lambda) + (A - u_x(0,t)) y_2(x,t,\lambda) \\
& = -(4 \lambda + 2 u(0,t))y_1(x,t,\lambda) + (4 \lambda + 2 u(x,t,\lambda)) y_{2x}(x,t,\lambda) \\
& \ \ \ \ \ \ \ \ \ -(u_x(x,t) + u_x(0,t)) y_2(x,t,\lambda).  \end{align}
Evaluating the above at $x = T$ and $\lambda = \mu_n(t)$ gives
\begin{equation} y_{2t}(T,t,\mu_n(t)) = (4 \mu_n(t) + 2 u(0,t))(y_{2x}(T,t,\mu_n(t)) - y_1(T,t,\mu_n(t))).  \end{equation}
From the expansion
\begin{equation} y_2(T,t,\lambda) = \prod_{m = 1}^\infty \frac{T^2}{m^2 \pi^2} (\mu_m(t) - \lambda) \end{equation}
we find on one hand that
\begin{equation} y_{2t}(T,t,\lambda) = \sum_{n = 1}^\infty \frac{T^2}{n^2 \pi^2}  \mu_{nt}(t) \prod_{m \ne n} \frac{T^2}{m^2 \pi^2} (\mu_m(t) - \lambda)   \end{equation}
and on the other that
\begin{equation} y_{2 \lambda} (T,t,\lambda) = -\sum_{n = 1}^\infty \frac{T^2}{n^2 \pi^2} \prod_{m \ne n} \frac{T^2}{m^2 \pi^2} (\mu_m(t) - \lambda).  \end{equation}
Evaluating the two previous expressions at $\lambda = \mu_n(t)$ gives $y_{2t}(T,t,\mu_n(t)) = -\mu_{nt}(t) y_{2 \lambda}(T,t,\mu_n)$.
\end{proof}

\begin{prop} \label{eq:proplaxsystem}
The time dependent Bloch--Floquet solutions $\psi^\pm(x,t,\lambda)$ solve
\begin{equation}\label{eqtoeval0}
\psi^\pm_t(x,t,\lambda) + \alpha^\pm(t,\lambda) \psi^\pm(x,t,\lambda) = A \psi^\pm(x,t,\lambda), 
\end{equation}
where the functions $\alpha^\pm(t,\lambda)$ are given by
\begin{equation}
\alpha^\pm(t,\lambda) = (4 \lambda + 2u(0,t)) \frac{\rho(\lambda)^{\pm 1} - y_1(T,t,\lambda)}{y_2(T,t,\lambda)} - u_x(0,t),
\end{equation}
and for each $s > 0$, as $\lambda \to \infty$, $\lambda \in \Omega_{s}$, we have
\begin{equation} \label{eq:assalpha} \alpha^\pm(t,\lambda) = \pm 4 i \sqrt{\lambda}^3 + O(\sqrt{\lambda}^{-1}) \ . \end{equation}
Moreover, if $\sigma_n(t) = 1$ then $\alpha^+(t,\lambda)$ has a simple pole at $\mu_n(t)$ with residue $-\mu_{nt}(t)$ and if $\sigma_n(t) = -1$ then $\alpha^-(t,\lambda)$ has a simple pole at $\mu_n(t)$ with residue $-\mu_{nt}(t)$. 

\end{prop}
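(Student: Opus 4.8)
The plan is to carry out the classical deformation argument of Its--Matveev and Dubrovin \cite{ItsMatv75b,Dubrovin75}, the same one that underlies the proof of Dubrovin's equation above. First I would differentiate $L\psi^\pm = \lambda\psi^\pm$ in $t$ and substitute the Lax equation $L_t = [A,L]$, exactly as in the proof of Dubrovin's equation, to obtain $L(\psi^\pm_t - A\psi^\pm) = \lambda(\psi^\pm_t - A\psi^\pm)$, so that $\psi^\pm_t - A\psi^\pm$ again solves Hill's equation with spectral parameter $\lambda$. Because the spectrum is preserved by the flow, $\Delta(\lambda)$ and hence $\rho(\lambda)$ are $t$-independent, so differentiating the Floquet relation $\psi^\pm(x+T,t,\lambda)=\rho(\lambda)^{\pm1}\psi^\pm(x,t,\lambda)$ in $t$ shows that $\psi^\pm_t$ carries the multiplier $\rho(\lambda)^{\pm1}$; and since $A$ has $T$-periodic coefficients it also maps functions with multiplier $\rho(\lambda)^{\pm1}$ to functions with the same multiplier. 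Hence, for fixed $t$ and each $\lambda$ in the interior of a spectral gap (where $\rho(\lambda)^{+1}\ne\rho(\lambda)^{-1}$, so the multiplier eigenspace is one-dimensional), $\psi^\pm_t - A\psi^\pm$ must be a scalar multiple of $\psi^\pm$; writing this multiple as $-\alpha^\pm(t,\lambda)$ gives (\ref{eqtoeval0}).

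To identify $\alpha^\pm$, I would evaluate (\ref{eqtoeval0}) at $x=0$. The normalization $\psi^\pm(0,t,\lambda)\equiv1$ forces $\psi^\pm_t(0,t,\lambda)=0$, so $\alpha^\pm(t,\lambda)=(A\psi^\pm)(0,t,\lambda)$. Using Hill's equation to eliminate the third derivative, $\psi^\pm_{xxx}=u_x\psi^\pm+(u-\lambda)\psi^\pm_x$, one gets $A\psi^\pm=(4\lambda+2u)\psi^\pm_x-u_x\psi^\pm$; evaluating at $x=0$ and inserting $\psi^\pm_x(0,t,\lambda)=\bigl(\rho(\lambda)^{\pm1}-y_1(T,t,\lambda)\bigr)/y_2(T,t,\lambda)$, which follows from (\ref{eq:blofloprime}) together with $y_1'(0,\lambda)=0$ and $y_2'(0,\lambda)=1$, yields the stated formula for $\alpha^\pm$. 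Since every function involved is meromorphic in $\lambda$, the identity (\ref{eqtoeval0}) with this $\alpha^\pm$ then extends to all $\lambda$ by analytic continuation. The asymptotics (\ref{eq:assalpha}) follow by substituting (\ref{eq:propx02}) — available here because $u$ is smooth in this section — into the formula for $\alpha^\pm$ and collecting powers of $\sqrt{\lambda}$: the two $\sqrt{\lambda}$-contributions, from $4\lambda\cdot\bigl(\pm u(0,t)/(2i\sqrt{\lambda})\bigr)$ and from $2u(0,t)\cdot(\pm i\sqrt{\lambda})$, cancel, and $4\lambda\cdot u_x(0,t)/(4\lambda)=u_x(0,t)$ cancels the explicit $-u_x(0,t)$, leaving $\alpha^\pm(t,\lambda)=\pm4i\sqrt{\lambda}^3+O(\sqrt{\lambda}^{-1})$.

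For the pole structure I would note that the only candidates for poles of $\alpha^\pm$ are the simple zeros $\mu_n(t)$ of $y_2(T,t,\lambda)$. If $\sigma_n(t)=1$, then $\mu_n(t)$ lies in the interior of its gap, $\rho$ is analytic there, and by Claim \ref{clm:dirflo} together with the definition of the signature we have $y_1(T,t,\mu_n(t))=\rho(\mu_n(t))^{-1}$; hence the numerator $\rho(\lambda)^{-1}-y_1(T,t,\lambda)$ of $\alpha^-$ vanishes at $\mu_n(t)$ and cancels the pole, while $\alpha^+$ has a simple pole there with residue $(4\mu_n(t)+2u(0,t))\bigl(\rho(\mu_n(t))-\rho(\mu_n(t))^{-1}\bigr)/y_{2\lambda}(T,t,\mu_n(t))$, which equals $-\mu_{nt}(t)$ by Dubrovin's equation (\ref{eq:xdubrovin}) with $\sigma_n(t)=1$. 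The case $\sigma_n(t)=-1$ is symmetric, with the roles of $\psi^+$ and $\psi^-$ (equivalently of $\rho$ and $\rho^{-1}$) interchanged.

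The step I expect to be the main obstacle is the first one: $\psi^\pm(x,t,\lambda)$ is only meromorphic in $\lambda$ and its poles (the Dirichlet eigenvalues) move with $t$, so some care is needed to ensure that the $t$-differentiation is legitimate at a fixed generic $\lambda$, and that the scalar-multiple identity, first obtained on the open dense set of $\lambda$ avoiding the moving poles, is correctly promoted to the meromorphic identity asserted in the proposition.
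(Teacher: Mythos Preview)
Your proof is correct and follows essentially the same route as the paper: differentiate Hill's equation in $t$, use the Lax equation to show $\psi^\pm_t - A\psi^\pm$ again solves Hill's equation, argue it must be proportional to $\psi^\pm$, then evaluate at $x=0$ and invoke (\ref{eq:propx02}). The only cosmetic difference is in how the cross-term is eliminated: the paper expands $\psi^+_t - A\psi^+$ in the basis $\psi^+,\psi^-$ and kills the $\psi^-$-coefficient by noting it is the unique term that grows as $x\to+\infty$ when $\lambda\in\mathbb{C}\setminus\sigma(L)$, whereas you use the Floquet-multiplier argument (both $\psi^\pm_t$ and $A\psi^\pm$ carry the multiplier $\rho^{\pm1}$, and the multiplier eigenspaces are one-dimensional off the spectrum); these are interchangeable. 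Your treatment of the pole/residue assertion via Claim~\ref{clm:dirflo} and Dubrovin's equation~(\ref{eq:xdubrovin}) is in fact more complete than the paper's proof, which stops after the asymptotics and does not spell out the residue computation.
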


%\begin{equation} -\int_0^t \frac{\mu_t}{\lambda - \mu(t)} d t = -\int_{\mu(0)}^{\mu(t)} \frac{d \mu}{\lambda - \mu} = \log(\lambda - \mu(t)) - \log(\lambda - \mu(0)) \end{equation}

\begin{proof}

This proof proceeds in a manner similar to the proof of the previous proposition.
For each time $t$ we can produce normalized Bloch--Floquet solutions $\psi^+(x,t,\lambda)$ and $\psi^-(x,t,\lambda)$ that factor as
\begin{equation} \psi^+(x,t,\lambda) = p^+(x,t,\lambda) \rho(\lambda)^{x T^{-1}}, \psi^-(x,t,\lambda) = p^-(x,t,\lambda) \rho(\lambda)^{-x T^{-1}}. \end{equation}
Differentiating $(L - \lambda) \psi^\pm(x,t) = 0$ by $t$, and using the Lax equation and Hill's equation implies 
\begin{equation}
L_t \psi^+ + L \psi_t^+ - \lambda \psi_t^+ = [A,L] \psi^+ + L \psi_t^+ - \lambda \psi^+_t = (L - \lambda)(\psi_t^+ - A \psi^+) = 0.
\end{equation}
Therefore $\psi_t^+ - A \psi^+$ solves Hill's equation for all time so
\begin{equation} \label{eq:1234567}
p_t^+ \rho(\lambda)^{x T^{-1}} = A (p^+ \rho(\lambda)^{x T^{-1}}) - \alpha^+(t,\lambda) p^+ \rho(\lambda)^{x T^{-1}} - \beta^+(t,\lambda) p^- \rho(\lambda)^{-x T^{-1}}
\end{equation}
for some $x$ independent functions $\alpha^+(t,\lambda)$ and $\beta^+(t,\lambda)$.
For $\lambda \in \mathbb{C}\setminus \sigma(L)$ every term in (\ref{eq:1234567}) decays exponentially as $x \to \infty$ except $\beta^+(t,\lambda) \psi^-\rho(\lambda)^{-x T^{-1}}$, so $\beta^+(\lambda) = 0$.
Therefore, $\psi^+$ solves
\begin{equation}
\psi^+_t + \alpha^+(t,\lambda) \psi^+ = A \psi^+ 
\end{equation}
for some $\alpha^+(t,\lambda)$.
An analogous argument implies $\psi^-$ solves
\begin{equation}
\psi^-_t + \alpha^-(t,\lambda) \psi^- = A \psi^- 
\end{equation}
for some $\alpha^-(t,\lambda)$.

The functions $A \psi^\pm$ are given by
\begin{equation}  A \psi^\pm (x,t,\lambda) = (4 \lambda + 2 u(x,t)) \psi_{x}^\pm (x,t,\lambda) - u_x(x,t,\lambda) \psi^\pm (x,t,\lambda).  \end{equation}
Since $\psi^\pm(0,t,\lambda) = 1$ for all $t$, $\psi_t^\pm(0,t,\lambda) = 0$.
Also, recall that
\begin{equation} \psi^\pm_x(0,t,\lambda) = \frac{\rho(\lambda)^{\pm1} - y_1(T,t,\lambda)}{y_2(T,t,\lambda)}. \end{equation}
Therefore, evaluation of (\ref{eqtoeval0}) at $x = 0$ gives the following formula
\begin{equation}
\alpha^\pm(t,\lambda) = (4 \lambda + 2u(0,t)) \frac{\rho(\lambda)^{\pm 1} - y_1(T,t,\lambda)}{y_2(T,t,\lambda)} - u_x(0,t).
\end{equation}
Plugging in the asymptotic description from (\ref{eq:propx02}) into the preceding formula immediately gives (\ref{eq:assalpha}).
\end{proof}

Let $e^\pm (t,\lambda)$ be solutions to
\begin{equation} e_t^\pm (t,\lambda)  = \alpha^\pm(t,\lambda) e^\pm(t,\lambda) \end{equation}
with $e^\pm(0,\lambda) = 1$,
or equivalently let $e^\pm(t,\lambda)$ be given by
\begin{equation} e^\pm(t,\lambda) = \exp \left( \int_0^t \alpha^\pm(\tau,\lambda) d \tau \right).\end{equation}
The solutions
\begin{equation} \label{eq:psitobrevepsi} \breve \psi^\pm(x,t,\lambda) = \psi^\pm(x,t,\lambda) e^\pm(t,\lambda) \end{equation}
to Hill's equation satisfy the system
\begin{equation} \label{eqcompat}
\breve \psi_t^+ = A \breve \psi^+, \breve \psi_t^- = A \breve \psi^-,
\end{equation}
for which (\ref{eq:Lax}) is the compatibility condition.
The idea here is that $e^\pm(t,\lambda)$ should cancel the time dependent singularities of $\psi^{\pm}(x,t,\lambda)$ and preserve the initial singularities for all $t$.

\begin{prop} \label{proper:e}
The functions $e^\pm(t,\lambda)$ satisfy the following properties.
\begin{enumerate}
\item $e^\pm(t,\lambda)$ are meromorphic functions in $\mathbb{C} \setminus \sigma(L)$.
\item The boundary values of $e^\pm$ satisfy $e_+^\pm(t,E) = e_-^\mp(t,E)$ for $E \in \sigma(L)$.
\item $e^+(t,\lambda)$ has simple poles on $\mu_{n_k}(0)$ when $\sigma_{n_k}(0) = 1$ and simple zeros on $\mu_{n_k}(t) = 1$ when $\sigma_{n_k}(t) = 1$. $e^-(t,\lambda)$ has simple poles on $\mu_{n_k}(0)$ when $\sigma_{n_k}(0) = -1$ and simple zeros on $\mu_{n_k}(t)$ when $\sigma_{n_k}(t) = -1$. $e^\pm(t,\lambda)$ both have square root singularities at $\mu_{n_k}(0)$ when $\sigma_{n_k}(0) = 0$ and square root zeros at $\mu_{n_k}(t)$ when $\sigma_{n_k}(t) = 0$.
\item For fixed $t$, $e^\pm$ has asymptotic descriptions $e^\pm(t,\lambda) = e^{\pm4 i \sqrt{\lambda}^3 t}(1 + O(\sqrt{\lambda}^{-1}))$ as $\lambda \to \infty$ with $\lambda \in \Omega_s$ for some $0 < s < \tfrac{\pi}{8}$.
\item There exist positive constants $c$, and $M$ such that $|e^\pm (t,\lambda)| \le M e^{c|\lambda|^2} $ for all $\lambda \in \mathcal{D}$.
\end{enumerate}
\end{prop}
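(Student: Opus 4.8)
The plan is to deduce all five properties of $e^\pm(t,\lambda)=\exp\!\big(\int_0^t\alpha^\pm(\tau,\lambda)\,d\tau\big)$ from the corresponding properties of $\alpha^\pm$ established in Proposition \ref{eq:proplaxsystem}, the one genuinely nontrivial point being the Phragm\'{e}n--Lindel\"{o}f bound of property 5.

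For properties 1--3 I would proceed as follows. By Proposition \ref{eq:proplaxsystem}, for each fixed $\tau$ the function $\alpha^\pm(\tau,\cdot)$ is meromorphic on $\mathbb{C}\setminus\sigma(L)$ with only simple poles, located at the $\mu_n(\tau)$ in non-degenerate gaps with $\sigma_n(\tau)=\pm1$ and each with residue $-\mu_{n\tau}(\tau)$, and with a square-root singularity where $\sigma_n(\tau)=0$ (a degenerate gap end), exactly as in the proof of condition 2 of Riemann--Hilbert problem \ref{rhp:Phi}. Fix a non-degenerate gap index $n=n_k$ and a point $\lambda$ near the open gap. On any maximal subinterval $[\tau_1,\tau_2]\subset[0,t]$ on which $\sigma_n\equiv1$, integration gives $\int_{\tau_1}^{\tau_2}\alpha^+(\tau,\lambda)\,d\tau=\log(\mu_n(\tau_2)-\lambda)-\log(\mu_n(\tau_1)-\lambda)$ plus a term holomorphic and nonvanishing near the open gap, because on $[\tau_1,\tau_2]$ the only pole of $\alpha^+(\tau,\cdot)$ near that gap is the simple one at $\mu_n(\tau)$ with residue $-\mu_{n\tau}(\tau)$, which is precisely the residue at $\lambda=\mu_n(\tau)$ of $\tfrac{\partial}{\partial\tau}\log(\mu_n(\tau)-\lambda)=\mu_{n\tau}(\tau)/(\mu_n(\tau)-\lambda)$. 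The interior endpoints of such subintervals are band-end times ($\mu_n(\tau_i)\in\{E_{2k-1},E_{2k}\}$), and over $[0,t]$ their logarithmic contributions telescope into a power of $\tfrac{E_{2k}-\lambda}{E_{2k-1}-\lambda}$, which is holomorphic and nonvanishing on $\mathbb{C}\setminus\sigma(L)$; what survives is a factor $\mu_n(t)-\lambda$ precisely when $\sigma_n(t)=1$ and a factor $(\mu_n(0)-\lambda)^{-1}$ precisely when $\sigma_n(0)=1$. Exponentiating and multiplying over all gaps --- the product converging for smooth $u$ because the gap lengths and the number of oscillations of $\mu_{n_k}$ on $[0,t]$ are controlled by Theorem \ref{thm:forspec}(4) and Dubrovin's equation (\ref{eq:xdubrovin}) --- shows $e^+(t,\cdot)$ is meromorphic on $\mathbb{C}\setminus\sigma(L)$ with a simple zero at each $\mu_n(t)$ with $\sigma_n(t)=1$ and a simple pole at each $\mu_n(0)$ with $\sigma_n(0)=1$, and the square-root singularities of $\alpha^\pm$ at degenerate gap ends yield the stated square-root zeros and singularities; the same argument for $\alpha^-$ gives the $\sigma_n=-1$ statements. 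This proves properties 1 and 3. Property 2 follows since $\rho_+(\lambda)=\rho_-(\lambda)^{-1}$ on $\sigma(L)$ (see (\ref{eq:jumpshifteval})) forces $\alpha^+_+(\tau,E)=\alpha^-_-(\tau,E)$ and $\alpha^-_+(\tau,E)=\alpha^+_-(\tau,E)$ for $E\in\sigma(L)$, and integrating in $\tau$ gives $e^\pm_+(t,E)=e^\mp_-(t,E)$. Property 4 is immediate from (\ref{eq:assalpha}): the $O(\sqrt{\lambda}^{-1})$ error there is uniform in $\tau\in[0,t]$ because $u$ is smooth and $[0,t]$ is compact, so $\int_0^t\alpha^\pm(\tau,\lambda)\,d\tau=\pm4i\sqrt{\lambda}^3t+O(\sqrt{\lambda}^{-1})$, whence $e^\pm(t,\lambda)=e^{\pm4i\sqrt{\lambda}^3t}(1+O(\sqrt{\lambda}^{-1}))$.

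For property 5 the key observation is that $|e^\pm(t,\lambda)|=\exp\!\big(\int_0^t\text{Re}\,\alpha^\pm(\tau,\lambda)\,d\tau\big)$, so it suffices to show $|\alpha^\pm(\tau,\lambda)|\le C|\lambda|^2$ for all $\lambda\in\mathcal{D}$, uniformly in $\tau\in[0,t]$; then $|e^\pm(t,\lambda)|\le e^{Ct|\lambda|^2}$. On $\Omega_s$ this is immediate from property 4. On $\mathcal{D}\setminus\Omega_s$ --- the part of the sector around $\mathbb{R}^+$ lying in $\mathcal{D}$ --- I would write $\alpha^\pm(\tau,\lambda)=(4\lambda+2u(0,\tau))\,\psi^\pm_x(0,\tau,\lambda)-u_x(0,\tau)$ with $\psi^\pm_x(0,\tau,\lambda)=(\rho(\lambda)^{\pm1}-y_1(T,\tau,\lambda))/y_2(T,\tau,\lambda)$, and strengthen the $\mathcal{A}_1(\mathcal{D})$ bound used in Section \ref{sec:PL} to a polynomial bound $|\psi^\pm_x(0,\tau,\lambda)|\le C|\lambda|$ on $\mathcal{D}$, uniform in $\tau$. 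This strengthening rests on three facts valid on $\mathcal{D}$: $|\rho(\lambda)|\le1$ off $\sigma(L)$; $\lambda$ stays at distance at least $R/2$ from every zero $\mu_n(\tau)$ of $y_2(T,\tau,\cdot)$ and bounded away from every band end $E_k$; and the classical whole-plane estimates $y_1(T,\cdot)=\cos(\sqrt{\lambda}T)+O(|\lambda|^{-1/2}e^{|\text{Im}\sqrt{\lambda}|T})$ and $y_2(T,\cdot)=\sqrt{\lambda}^{-1}\sin(\sqrt{\lambda}T)+O(|\lambda|^{-1}e^{|\text{Im}\sqrt{\lambda}|T})$, which exhibit the exponential growth of $y_1(T,\cdot)$ as exactly cancelled by the exponential decay of $y_2(T,\cdot)^{-1}$ away from $\mathbb{R}^+$, while near $\mathbb{R}^+$ the $R/2$-separation bounds $|y_2(T,\cdot)|$ below by a negative power of $|\lambda|$. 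Granting this, $|\alpha^\pm(\tau,\lambda)|\le C'|\lambda|^2$ on $\mathcal{D}$, uniformly in $\tau$, and property 5 follows.

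I expect this last polynomial estimate on $\mathcal{D}$ to be the main obstacle. The membership $\psi^\pm_x(0,\tau,\cdot)\in\mathcal{A}_1(\mathcal{D})$ established in Section \ref{sec:PL} is not enough here: exponentiating a bound of order $e^{c|\lambda|}$ would yield only a double-exponential bound on $e^\pm$, far weaker than the $\mathcal{A}_2(\mathcal{D})$ control required, so one genuinely has to exploit $|\rho|\le1$ and the precise cancellation between $y_1(T,\cdot)$ and $y_2(T,\cdot)^{-1}$, with extra care near $\mathbb{R}^+$, in order to descend to polynomial growth before exponentiating.
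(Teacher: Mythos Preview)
Your arguments for properties 1, 2, 4, and 5 are essentially correct and match the paper's approach. For property 5 in particular, you have correctly identified that one needs a polynomial bound $|\alpha^\pm(\tau,\lambda)|\le C|\lambda|^2$ on $\mathcal{D}$ (not merely the $\mathcal{A}_1$ bound), and the paper establishes exactly this by splitting the image of $\mathcal{D}$ in the $z=\sqrt{\lambda}$ plane into a region with $\mathrm{Im}\,z$ large (where the exponential growth of $\rho^{\pm1}-y_1$ and of $y_2^{-1}$ cancel) and a strip near the real axis (where a Taylor expansion of $y_2(T,z^2)$ about its zeros, combined with the fixed separation enforced by $\mathcal{D}$, gives $|y_2|^{-1}=O(|\lambda|)$).

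Your treatment of property 3, however, has a genuine gap at band-end crossings. On a maximal interval $[\tau_1,\tau_2]$ with $\sigma_n\equiv+1$ whose endpoint $\tau_2$ is a band-end time ($\mu_n(\tau_2)=E_{2k}$, say), the decomposition $\alpha^+=-\mu_{n\tau}/(\lambda-\mu_n)+\alpha^+_{\mathrm{reg}}$ is valid pointwise in $\tau$, but $\alpha^+_{\mathrm{reg}}(\tau,\cdot)$ is \emph{not} uniformly holomorphic near $E_{2k}$: as $\tau\to\tau_2^-$ the residue $\mu_{n\tau}$ tends to $0$ by (\ref{eq:xdubrovin}) while $\alpha^+(\tau_2,\lambda)\sim C/\sqrt{\lambda-E_{2k}}$ (compare (\ref{eq:dontwantrepeatlabel123})). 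The integral $\int_{\tau_1}^{\tau_2}\alpha^+_{\mathrm{reg}}(\tau,\lambda)\,d\tau$ therefore contributes a $-\log(E_{2k}-\lambda)$ term that exactly cancels the one coming from your pole integration, so the telescoped factors $(E_{2k}-\lambda)/(E_{2k-1}-\lambda)$ you obtain are spurious. Your handling of the $\sigma_{n_k}=0$ case is also too vague (these are Dirichlet eigenvalues sitting at band ends of \emph{non}-degenerate gaps, not ``degenerate gap ends'') and cannot be completed without confronting this same difficulty.

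The paper resolves this by introducing a local coordinate $w=\rho-\rho^{-1}$ on the two-sheeted Riemann surface near the branch point. In this coordinate $\alpha^+$ and $\alpha^-$ glue to a single meromorphic function $\alpha(w)$ whose simple pole at $w(\mu_n(\tau))$ has residue $-\tfrac{d}{dt}w(\mu_n(t))$, which stays nonzero and smooth as $\mu_n$ crosses the band end. One then obtains $e(w)=\dfrac{w-w(\mu_n(t))}{w-w(\mu_n(0))}\,e_{\mathrm{reg}}(w)$ directly; pulling back to the two sheets yields the simple poles and zeros when $\sigma_{n_k}=\pm1$ and the square-root behavior when $\sigma_{n_k}=0$ (since $w\sim c\sqrt{\lambda-E}$ near a branch point). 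This Riemann-surface local coordinate is the missing idea in your argument for property 3.
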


\begin{proof}
We begin by noting that $e^{\pm}$ are holomorphic for $\lambda \in \mathbb{C} \setminus \mathbb{R}^+$ because $\alpha^{\pm}$ are holomorphic for $\lambda \in \mathbb{C}\setminus \mathbb{R}^+$ for all $t$.
We will see that $e^{\pm}$ extend to meromorphic functions for $\lambda \in \mathbb{C} \setminus \sigma(L)$ in proving property 3.
Property 2 follows from considering the boundary behavior of $\rho(\lambda)$.

To establish property 3, we begin by supposing without loss of generality that at $t = 0$, ${\mu_n \in (\lambda_{2n-1},\lambda_{2n})}$ and $\sigma_n = 1$.
Then, for sufficiently small time, $\mu_n(t) \in (\lambda_{2n-1},\lambda_{2n})$, and for $\lambda$ near $\mu_n(t)$ we may write using proposition \ref{eq:proplaxsystem}
\begin{equation} e^+(t,\lambda) = \exp\left(-\int_0^t \frac{\mu_{n\tau}(\tau) d\tau}{\lambda - \mu_n(\tau)}\right) e^+_{reg}(t,\lambda)\end{equation}
where $e^+_{reg}(t,\lambda)$ is holomorphic and nonzero in some open set containing the image of $\mu_n(t)$ for $t$ is some small time window. We can change variables from $t$ to $\mu = \mu_n(t)$ in the above integral to get
\begin{equation}\label{eq:230}-\int_0^t \frac{\mu_{n\tau}(\tau) d\tau}{\lambda - \mu_n(\tau)} = \int_{\mu_n(0)}^{\mu_n(t)} \frac{d \mu}{\mu-\lambda} = \log(\mu_n(t)-\lambda) - \log(\mu_n(0)-\lambda)\end{equation}
and therefore
\begin{equation} e^+(t,\lambda) = \frac{\lambda-\mu_n(t)}{\lambda-\mu_n(0)} e^+_{reg}(t,\lambda). \label{eq:204forrevision}\end{equation}
An analogous argument applies to $e^-$ when $\sigma_n = -1$.

Note that in (\ref{eq:230}), we used $\mu_{n}(t)$ as a variable change in the integral, which assumes that $\frac{\partial}{\partial t} \mu_{n}$ is nonzero.  In case this derivative vanishes (possible only at isolated values of $t$), we break integral into two pieces, and the formula (\ref{eq:230}) still holds.

This proves the $\sigma_{n_k} = \pm 1$ cases of the property $3$.
This calculation was local, and applies so long as $\mu_n(t)$ remains in the open interval $(\lambda_{2n-1},\lambda_{2n})$.
However, the Dirichlet eigenvalues $\mu_n(t)$ can be equal to the endpoints $\lambda_{2n-1}$ or $\lambda_{2n}$ of the gaps.
In order to understand what happens in this situation we will apply a time translation so that $\mu_n(0) = \lambda_{2n}$.

We introduce a local coordinate that takes into account the branch cut.
Without loss of generality, we assume that $\mu_n(t)$ is approaching $\lambda_{2n}$, and that $\mu_n(t)$ is a pole of $\alpha^+$.
The local coordinate $w$ is defined using two copies of the complex $\lambda$ plane (we only concern ourselves with a neighborhood of $\lambda_{2n}$ in each plane).
We define
\begin{equation} w(\lambda) = \begin{cases}\rho(\lambda) - \rho(\lambda)^{-1} & \lambda \in \text{sheet 1}, \lambda \text{ near } \lambda_{2n} \\ \rho(\lambda)^{-1} - \rho(\lambda) & \lambda \in \text{sheet 2}, \lambda \text{ near } \lambda_{2n} \end{cases}. \end{equation}
This is an analytic invertible transformation from a neighborhood of $0$ in the $w$ plane to the (local) 2-sheeted surface defined by gluing together the two sheets in the usual way.

In the $w$ plane, then the single function
\begin{equation} \alpha (t,w) = \begin{cases} \alpha^+(t,\lambda(w)), & \lambda(w) \in \text{sheet 1} \\ \alpha^-(t,\lambda(w)), & \lambda(w) \in \text{sheet 2}, \end{cases} \end{equation}
is a meromorphic function in a neighborhood of $w = 0$, with a single simple pole at $w(\mu_n)$.
Then the function
\begin{equation} e (t,w) = \begin{cases} e^+(t,\lambda(w)), & \lambda(w) \in \text{sheet 1} \\ e^-(t,\lambda(w)), & \lambda(w) \in \text{sheet 2}, \end{cases} \end{equation}
is related to $\alpha$ by
\begin{equation} e(t,w) = \exp\left( \int_0^t \alpha(\tau,w) d \tau \right). \end{equation}

The derivatives $w_\lambda(\lambda)$ can be computed as
\begin{equation} w_\lambda(\lambda)|_{\lambda \in \text{sheet 1}} = \frac{\rho_\lambda(\lambda)}{\rho(\lambda)} (\rho(\lambda) + \rho(\lambda)^{-1}) = \frac{\rho_\lambda(\lambda)}{\rho(\lambda)} \Delta(\lambda), \quad  w_\lambda(\lambda)|_{\lambda \in \text{sheet 2}} = -\frac{\rho_\lambda(\lambda)}{\rho(\lambda)} \Delta(\lambda) \end{equation}
Therefore,
\begin{equation} \label{eq:derw1form} \frac{d}{dt} w(\mu_n(t)) = \left. \sigma_n(t) \frac{\rho_\lambda(\lambda)}{\rho(\lambda)} \Delta(\lambda) \right|_{\lambda = \lambda(w(\mu_n(t)))} \mu_{nt}(t).\end{equation}
Using the the computation of the logarithmic derivative of $\rho$ (\ref{eq:logderrhoform}) and the $t$ derivative of $\mu_n$ (\ref{eq:xdubrovin}) in (\ref{eq:derw1form}) gives
\begin{equation} \frac{d}{dt} w(\mu_n(t)) = \left. \frac{\Delta(\lambda) \Delta_{\lambda} (\lambda)}{y_{2\lambda}(T,t,\lambda)} \right|_{\lambda = \lambda(w(\mu_n(t)))}(4 \lambda(w(\mu_n(t))) + 2 u(0,t)).\end{equation}
A straightforward calculation shows that
\begin{equation} \alpha(w) = \frac{-\frac{d}{dt} w(\mu_n(t))}{w - w(\mu_n(t))} + \text{reg near }\lambda_{2n}. \end{equation}
Now we compute the local behavior of $e(w)$ for $w$ near 0 as
\begin{equation} e(w) = \exp\left(-\int_0^t \frac{\frac{d}{dt'} w(\mu_n(t'))}{w - w(\mu_n(t'))} dt'\right) e_{reg}(w) = \frac{w - w(\mu_n(t))}{w - w(\mu_n(0))} e_{reg}(w),\end{equation}
where $e_{reg}$ is analytic and nonzero near $w = 0$.  (Again note that isolated values of $t$ where $\frac{d}{dt'} w(\mu_n(t'))$ vanishes are handled by splitting up the integral.)  This completes the proof of property 3.

To prove 4 we begin by recalling that as  $\lambda \to \infty$ for $\lambda \in \Omega_s$ the functions $\alpha^\pm(t,\lambda)/(\pm 4 i \sqrt{\lambda}^3)$ are continuous functions of $\lambda$ converging to 1.
The dominated convergence theorem gives
\begin{equation} \int_0^t \frac{\alpha(\tau,\lambda)}{4 i \sqrt{\lambda}^3} d \tau = \int_0^t \frac{\pm 4 i \sqrt{\lambda}^3 + O(\sqrt{\lambda}^{-1})}{4 i \sqrt{\lambda}^3} d \tau =   \frac{\pm 4 i \sqrt{\lambda}^3 t + O(\sqrt{\lambda}^{-1})}{4 i \sqrt{\lambda}^3}\end{equation}
and thus
\begin{equation} \int_0^t \alpha(\tau,\lambda) d \tau = \pm 4 i \sqrt{\lambda}^3 t + O(\sqrt{\lambda}^{-1})\end{equation}
as $\lambda \to \infty$ for $\lambda \in \Omega_s$.
Therefore $\breve \psi$ has the asymptotic description
\begin{equation} \label{eq:assbpsi} \breve \psi^\pm(x,t,\lambda) = e^{\pm(i \sqrt{\lambda} x + 4 i \sqrt{\lambda}^3 t)} (1 + O(\sqrt{\lambda}^{-1})) \end{equation}
as $\lambda \to \infty$ for $\lambda \in \Omega_s$.

The bound given in part 5 of the proposition follow from subalgebraic bounds on $\alpha_\pm$ in $\mathcal{D}$.
A subalgebraic bound of the form $|\alpha^\pm(t,\lambda)| \le \max\{C \sqrt{\lambda}^3,C'\}$ clearly holds for $\lambda \in \Omega_s \setminus \mathcal{D}$ from the asymptotic descriptions of $\alpha^\pm$.
The difficulty is extending the bound into the removed sector.
To make notation easier we consider $z = \sqrt{\lambda}$ with our choice of branch cut, taking $z$ in the upper half-plane.
We denote the image of $\mathcal{D}$ under the mapping to the $z$ upper half plane by $\mathcal{D}'$.
The domain $\mathcal{D}'$ consists of the upper half plane with excised half-domes (images of half-discs) centered on the real line whose heights are  bounded above by $R'/n$.

We break the domain $\mathcal{D}'$ into two domains.
One of the domains we choose as ${\mathcal{E} = \{z \in \mathbb{C}^+ : \text{Im}(z) > \max\{R',\log(2)/T\}}$.
If we define a function $w$ by $w(x,z) = e^{izx}y_2(x,z^2)$ then $w$ solves
\begin{equation} w(x,z) = \frac{e^{2izx}-1}{2 i z} + \int_0^x \frac{e^{2iz(x-t)}-1}{2 i z}u(t) w(z,t)dt.\end{equation}
By solving via Neumann series we find that
\begin{equation} \left| w(x,z) - \frac{e^{2izx}-1}{2 i z} \right| \le \frac{C}{|z|^2}. \end{equation}
It then follows that
\begin{equation} y_2(x,z^2) = \frac{\sin(zx)}{z} + O(|e^{-izx}| |z|^{-2}). \end{equation}
We have the lower bound
\begin{equation} |\sin(zT)/z| \ge  \frac{1}{|z|} \left(\frac{|e^{-izT}|}{2} - \frac{1}{4} \right) \ge \frac{|e^{-izT}|}{4|z|}. \end{equation}
Therefore, for $|z|$ large enough
\begin{equation} |y_2(T,z^2)| \ge C \frac{|e^{-iTz}|}{|z|}. \end{equation}
The bound 
\begin{equation} |\rho^{\pm 1} (z^2)-y_1(T,z^2)| \le C |e^{-iTz}| \end{equation} for $|z|$ large enough follows from the fact that $y_1(T,\lambda)$ has growth order $\tfrac{1}{2}$ and (\ref{eq:rhoass}).
Combining the above, we obtain
\begin{equation} |\alpha^\pm(t,z^2)| \le \max \{C |z|^3,C'\}\end{equation}
for $z \in \mathcal{E}$.

The other domain which we will label $\mathcal{F}$ consists of the rest of $\mathcal{D}'$.
For $z$ large enough, none of the excised discs will overlap. At this point the non-straight pieces of boundary are deformed semicircles $C_n$ labeled by $n$ with the closest point to $\sqrt{\mu_n}$ a distance away from $\sqrt{\mu_n}$ bounded below by $R'/n$ for some constant $R'$, and furthest point point from $\sqrt{\mu_n}$ a distance away from $\sqrt{\mu_n}$ bounded above by $R''/n$ for some constant $R''$.
As $z \to \infty$ the following asymptotic behavior is valid \cite{MckTr78}
\begin{equation} y_2(T,z^2) = \frac{\sin(zT)}{z} - \frac{\cos(zT)}{2z^2} Q + O(z^{-3}),\end{equation}
and therefore
\begin{equation} \frac{\partial}{\partial z} y_2(T,z^2) = \frac{1}{z}(T \cos(zT) + O(z^{-1})). \end{equation}
The values $z = \sqrt{\mu_n}$ are a distance $O(n^{-1}) \sim \sqrt{\mu_n}^{-1}$ away from the the values of $z$ for which $\cos(zT) = \pm 1$, so for large $n$
\begin{equation} \frac{\partial}{\partial z} y_2(T,z^2)|_{z = \sqrt{\mu_n}} = \frac{T}{\sqrt{\mu_n}} + O (\mu_n^{-1}). \end{equation}
Therefore, on $C_n$ the functions $y_2(T,z^2)$ are approximated by
\begin{equation} y_2(T,z^2) = \left(\frac{T}{\sqrt{\mu_n}} + O(\mu_n^{-1}) \right)(z - \sqrt{\mu_n}) + O((z - \sqrt{\mu_n})^2) \ge \frac{2 R' T}{\pi n^2} \end{equation}
for $n$ sufficiently large.
Since $z \sim \frac{n \pi}{T}$ for $z \in C_n$ for $n$ sufficiently large, we then have ${y_2(T,z^2)^{-1} = O(z^2)}$ for $z \in C_n$ for $n$ sufficiently large. 
For sufficiently large $z$ we also have $y_2(T,z^2)$ monotone increasing in $\text{Im}(z)$ when $\text{Re}(z)$ is held constant.
Therefore $|y_2(T,z^2)^{-1}| \le \max\{C |z|^2,C'\}$ for $z \in \mathcal{F}$.
Combining and exponentiating the bounds in $\mathcal{E}$ and $\mathcal{F}$ give property 5.
\end{proof}

\begin{defn}
Let $\breve V : \mathbb{R}^+ \setminus \{E_j\}_{j = 0}^{2 \mathcal{G}} \to SL(2,\mathbb{C})$ be given by
\begin{equation} \label{eq:KdVjump} \breve V (\lambda) := \begin{cases} (-1)^{k + m(\lambda) - 1} \begin{pmatrix} 0 & i \frac{f^+(\lambda)}{f^-(\lambda)} \\ i \frac{f^-(\lambda)}{f^+(\lambda)} & 0 \end{pmatrix} & \lambda \in (E_{2k-2},E_{2k-1})  \\ (-1)^{k + m(\lambda)} e^{2 i \sigma_3 \sqrt{\lambda} x+ 8 i \sigma_3 \sqrt{\lambda}^3 t} & \lambda \in (E_{2k-1},E_{2k}) \end{cases}, \end{equation}
where recall $m(\lambda)$ is given by (\ref{eq:mcounting}) as
\begin{equation}  m(\lambda) := |\{ k \in \mathbb{N} : \mu_{n_k} \le \lambda, \sigma_{n_k} = 0 \}|. \end{equation}
\end{defn}

We define $\breve \Phi$ via
\begin{equation} \label{eq:defbPhi} \breve \Phi (x,t,\lambda) := \begin{pmatrix} \breve \psi^-(x,t,\lambda) & \breve \psi^+(x,t,\lambda) \\ \breve \psi_x^-(x,t,\lambda) & \breve \psi_x^+(x,t,\lambda) \end{pmatrix}  B(\lambda) e^{i \sigma_3 \sqrt{\lambda} x + 4 i \sigma_3 \sqrt{\lambda}^3 t } \end{equation}
We can now prove the following theorem giving the solution to the Cauchy problem to the KdV equation in terms of a Riemann--Hilbert problem like the Riemann--Hilbert problem given in section 5 for infinite gap Hill's operators, but with a time dependance added to the jump.
The function $\breve \Phi$ is a particular solution to the Riemann--Hilbert problem giving the solution to the KdV equation.
\begin{thm} \label{thm:rhpKdV}
Suppose that $u(x,t)$ is the solution to the KdV equation
\begin{equation}u_t - 6 u u_x + u_{xxx} = 0\end{equation}
with smooth initial data $u(x,0) = u_0(x)$ and let $\Sigma(u_0)$ be the spectral data for corresponding Hill's operator $-\frac{\partial^2}{\partial x^2} + u_0(x)$ (recall the the spectral data for a Hill's operator is defined in (\ref{eq:specdata})).
There exists a solution $\breve \Phi$ to the following Riemann--Hilbert problem, constructed via (\ref{eq:defbPhi}) above.  The first row of the solution is uniquely determined by this Riemann--Hilbert problem.
\begin{rhp} \label{rhp:KdV}
For fixed $x,t \in \mathbb{R}$ find a $2 \times 2$ matrix valued function $\breve \Phi(x,t,\lambda)$ such that:
\begin{enumerate}

\item $\breve \Phi(x,t,\lambda)$ is a holomorphic function of $\lambda$ for $\lambda \in \mathbb{C} \setminus \mathbb{R}^+$.

\item $\breve \Phi_{\pm} (x,t,\lambda)$ are continuous functions of $\lambda \in \mathbb{R}^+ \setminus \{E_j\}_{j = 0}^{2 \mathcal{G}}$ that have at worst quartic root singularities on $\{E_k\}_{k = 0}^{2 \mathcal{G}}$.

\item $\breve \Phi_{\pm} (x,t,\lambda)$ satisfy the jump relation $\breve \Phi_+(x,t,\lambda) = \breve \Phi_-(x,t,\lambda)  \breve V (x,t,\lambda)$.

\item $\breve \Phi (x,t,\lambda)$ has an asymptotic description of the form
\begin{equation} \label{eq:assphibrebe} \breve \Phi(x,t,\lambda) =  \begin{pmatrix} 1 & 1 \\ -i \sqrt{\lambda} & i \sqrt{\lambda} \end{pmatrix} \left( I + O\left(\sqrt{\lambda}^{-1} \right) \right) B(0,\lambda) \end{equation}
as $\lambda \to \infty$ with $\lambda \in \Omega_s$ for some $0 < s < \tfrac{\pi}{8}$.

\item There exist positive constants $c$, and $M$ such that $|\breve \phi_{ij} (x,t,\lambda)| \le M e^{c|\lambda|^2} $ for all $\lambda \in \mathcal{D}$.
\end{enumerate}
\end{rhp}
Let $x,t \in \mathbb{R}$ be fixed, then $\tilde \Phi(x,t,\lambda)$ solves Riemann--Hilbert problem \ref{rhp:KdV} if and only if
\begin{equation} \label{eq:solspacePhi2} \tilde \Phi (x,t,\lambda) = \begin{pmatrix} 1 & 0 \\ \alpha & 1 \end{pmatrix} \breve \Phi (x,t, \lambda) \end{equation}
for some $\alpha$ that is constant in $\lambda$.
The value $u(x,t)$ of the solution $u$ to the KdV equation with initial data $u_0$ evaluated at $(x,t)$ can be recovered from any solution $\breve \Phi(x,t,\lambda)$ to this Riemann--Hilbert problem as
\begin{equation} \label{eq:KdVrecon} u(x,t) = 2 i \frac{\partial}{\partial x} \lim_{\lambda \to \infty} \sqrt{\lambda} \left( 1 -  b_{11}(0,\lambda)^{-1} \breve\phi_{11} (x,t,\lambda)  \right). \end{equation}
\end{thm}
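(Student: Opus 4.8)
The plan is to deduce Theorem~\ref{thm:rhpKdV} from the inverse-spectral results of Section~5 (Proposition~\ref{prop:rhpPhi}, the determinant lemma, Theorem~\ref{thm:rhpPhi}) together with Proposition~\ref{proper:e}, by observing that $\breve\Phi$ defined in (\ref{eq:defbPhi}) is the exact analogue of $\Phi$ with $\psi^\pm$ replaced by $\breve\psi^\pm=\psi^\pm e^\pm$, the regularizing matrix taken to be $B(0,\lambda)$ (built from the time-independent data $\Sigma(u_0)$), and an extra scalar factor $e^{4i\sigma_3\sqrt{\lambda}^3 t}$ inserted. The band ends $E_j$ and the degenerate-gap indices $n_k$ do not move, so the discs $D_{n_k}$ and the domain $\mathcal{D}$ are the same for all $t$.

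\emph{Existence.} First I would verify the five conditions of Riemann--Hilbert problem~\ref{rhp:KdV} for $\breve\Phi$. Condition~1: by Proposition~\ref{proper:e}(3) the time-dependent poles/branch points of $\psi^\pm(x,t,\cdot)$ at $\mu_n(t)$ are exactly cancelled by the zeros of $e^\pm(t,\cdot)$ there, so $\breve\psi^\pm(x,t,\cdot)$ is meromorphic on $\mathbb{C}\setminus\sigma(L)$ with singularities only on $\{\mu_{n_k}(0)\}$; conjugating by the diagonal matrix $B(0,\lambda)e^{i\sigma_3(\sqrt{\lambda}x+4\sqrt{\lambda}^3 t)}$ then removes those exactly as in the proof of condition~1 of Proposition~\ref{prop:rhpPhi}, and the same singularity bookkeeping at the $E_j$ (now using Proposition~\ref{proper:e}(3) for the at-worst square-root behavior of $e^\pm$ at a degenerate $\mu_{n_k}$) gives condition~2. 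For condition~3, the relation $\Psi_+=\Psi_-\sigma_1$ on $\sigma(L)$ together with $e^\pm_+=e^\mp_-$ (Proposition~\ref{proper:e}(2)) gives $\breve\Psi_+=\breve\Psi_-\sigma_1$; conjugation reproduces the Section~5 computation, the new point being that on the bands $(E_{2k-2},E_{2k-1})$ one has $(\sqrt{\lambda}^3)_+ +(\sqrt{\lambda}^3)_-=0$ (odd power) so the $x$- and $t$-exponentials cancel against the off-diagonal kernel, while on the gaps $(E_{2k-1},E_{2k})$ the branch jump of $\sqrt{\lambda}^3$ doubles to produce the factor $e^{8i\sigma_3\sqrt{\lambda}^3 t}$ in $\breve V$. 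Condition~4 follows from the asymptotic description (\ref{eq:assbpsi}) of $\breve\psi^\pm$ (which is Lemma~\ref{lma:BFass} times Proposition~\ref{proper:e}(4)) after the matrix manipulation of the proof of condition~4 of Proposition~\ref{prop:rhpPhi}, with the $e^{4i\sigma_3\sqrt{\lambda}^3 t}$ in (\ref{eq:defbPhi}) cancelling the $e^{\pm 4i\sqrt{\lambda}^3 t}$ carried by $\breve\psi^\pm$, leaving $B(0,\lambda)$. Condition~5 follows by multiplying the $\mathcal{A}_1$ bounds on $\psi^\pm$ and on the factors of $B$ from Section~5 with the $\mathcal{A}_2$ bound of Proposition~\ref{proper:e}(5) on $e^\pm$ and the $\mathcal{A}_{3/2}$ bound on $e^{4i\sigma_3\sqrt{\lambda}^3 t}$, using Propositions~\ref{prop:pl0} and~\ref{prop:pl2}.

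\emph{Uniqueness.} This is a line-by-line repeat of the determinant lemma and Theorem~\ref{thm:rhpPhi}. Any solution of Riemann--Hilbert problem~\ref{rhp:KdV} has determinant identically $1$: the determinant has no jump (since $\det\breve V\equiv 1$) and at worst square-root singularities at the $E_j$, hence extends to an entire function; it lies in $\mathcal{A}_2(\mathbb{C})$ by Propositions~\ref{prop:pl2} and~\ref{prop:pl3}; and by condition~4 together with the Section~5 identity $\det\!\big(\begin{pmatrix}1&1\\-i\sqrt{\lambda}&i\sqrt{\lambda}\end{pmatrix}B(0,\lambda)\big)=1+O(\sqrt{\lambda}^{-1})$ (applied to the Hill operator with potential $u_0$) it equals $1+O(\sqrt{\lambda}^{-1})$ in $\Omega_s$; so Phragm\'en--Lindel\"of (Theorem~\ref{thm:PL}) and Liouville force it to be $1$. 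Given two solutions $\tilde\Phi,\breve\Phi$, the ratio $R=\breve\Phi\tilde\Phi^{-1}$ has no jump, has at worst square-root singularities at the $E_j$ hence is entire, lies in $\mathcal{A}_2(\mathbb{C})$, and (the common factor $B(0,\lambda)$ cancelling between the asymptotics of $\breve\Phi$ and of $\tilde\Phi^{-1}$) satisfies $R=\begin{pmatrix}1+O(\sqrt{\lambda}^{-1})&O(\sqrt{\lambda}^{-1})\\ O(1)&1+O(\sqrt{\lambda}^{-1})\end{pmatrix}$ in a sector; Phragm\'en--Lindel\"of and Liouville make $R$ the constant matrix $\begin{pmatrix}1&0\\\alpha&1\end{pmatrix}$, which is (\ref{eq:solspacePhi2}) and in particular pins down the first row.

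\emph{Reconstruction.} Since $B(0,\lambda)$ is diagonal, (\ref{eq:defbPhi}) gives $b_{11}(0,\lambda)^{-1}\breve\phi_{11}(x,t,\lambda)=\big[\psi^-(x,t,\lambda)e^{i\sqrt{\lambda}x}\big]\big[e^-(t,\lambda)e^{4i\sqrt{\lambda}^3 t}\big]$; by Lemma~\ref{lma:BFass} the first bracket is $1-\tfrac{1}{2i\sqrt{\lambda}}\int_0^x u(y,t)\,dy+O(\lambda^{-1})$ and by Proposition~\ref{proper:e}(4) the second is $1+O(\sqrt{\lambda}^{-1})$ with the correction independent of $x$, so $\partial_x$ annihilates it and $2i\,\partial_x\lim_{\lambda\to\infty}\sqrt{\lambda}\big(1-b_{11}(0,\lambda)^{-1}\breve\phi_{11}\big)=\partial_x\int_0^x u(y,t)\,dy=u(x,t)$, recovering (\ref{eq:KdVrecon}); independence of the choice of solution is automatic because left multiplication by $\begin{pmatrix}1&0\\\alpha&1\end{pmatrix}$ fixes the first row. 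The main obstacle is not any single step --- each reduces to Section~5 or Proposition~\ref{proper:e} --- but rather the careful verification in the existence part that the time-exponentials $e^\pm(t,\lambda)$ and $e^{4i\sigma_3\sqrt{\lambda}^3 t}$ interact correctly: that Proposition~\ref{proper:e}(2)--(3) really align the singularities and boundary values of $\breve\psi^\pm$ with the frozen data $\Sigma(u_0)$, and that the branch jump of $\sqrt{\lambda}^3$ produces precisely the factor $e^{8i\sigma_3\sqrt{\lambda}^3 t}$ on the gaps while leaving the band jumps untouched.
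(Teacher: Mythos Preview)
Your proposal is correct and follows essentially the same approach as the paper: existence by combining the Section~5 verification for $\Phi$ with the properties of $e^\pm$ in Proposition~\ref{proper:e}, uniqueness by repeating verbatim the determinant lemma and the ratio argument of Theorem~\ref{thm:rhpPhi}, and reconstruction by noting that the $e^-(t,\lambda)e^{4i\sqrt{\lambda}^3 t}$ factor contributes only an $x$-independent term at order $\sqrt{\lambda}^{-1}$. The paper's own proof is terser and phrases existence via the identity $\breve\Phi(x,t,\lambda)=\Phi(x,t,\lambda)\,B(t,\lambda)^{-1}\operatorname{diag}(e^-,e^+)\,B(0,\lambda)\,e^{4i\sigma_3\sqrt{\lambda}^3 t}$ relating the time-$t$ RHP~\ref{rhp:Phi} to RHP~\ref{rhp:KdV}, but this is only an organizational difference from your direct verification of conditions 1--5.
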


The existence of a solution $\breve \Phi(x,t,\lambda)$ follows from the existence of a solution to the periodic KdV equation, considering (\ref{eq:defbPhi}), and combining the construction of a solution to Riemann--Hilbert problem \ref{rhp:Phi}, along with with properties \ref{proper:e} of $e^\pm(t,\lambda)$ and (\ref{eq:204forrevision}).
Solutions to the time dependent version of Rieman--Hilbert problem $\ref{rhp:Phi}$ are related to solutions of $\ref{rhp:KdV}$ by the following relation
\begin{equation}
\breve \Phi(x,t,\lambda) = \Phi(x,t,\lambda) B(t,\lambda)^{-1} \begin{pmatrix} e^-(t,\lambda) & 0 \\ 0 & e^+(t,\lambda) \end{pmatrix}  B(0,\lambda) e^{4 i \sigma_3 \sqrt{\lambda}^3 t}. 
\end{equation}
The relation between $\tilde \Phi$ and $\breve \Phi$ by (\ref{eq:solspacePhi2}) giving uniqueness of the first row can be proven by applying the steps of the proof of the corresponding result in Theorem \ref{thm:rhpPhi}.
The time dependence does not affect that validity of the steps taken in the proof.
The formula (\ref{eq:KdVrecon}) generating a solution solution to the KdV equation from the solution to Riemann--Hilbert problem \ref{rhp:KdV} follows from the formula (\ref{eq:reconstructionformulapotential}) generating the potential solving the inverse spectral problem from the solution to Riemann--Hilbert problem \ref{rhp:Phi}.
This is because the effect of multiplying $\psi^\pm(x,t,\lambda)$ by $e^\pm(t,\lambda)$ in (\ref{eq:psitobrevepsi}) only leads to an additive term in
\begin{equation} \lim_{\lambda \to \infty} \sqrt{\lambda} \left( 1 -  b_{11}(0,\lambda)^{-1} \breve\phi_{11} (x,t,\lambda) \right) \end{equation}
that is constant in $x$.
If one multiplies out the phase factors appearing in the asymptotic condition (\ref{eq:assbpsi}) and conjugates the jump matrix accordingly one gets the jump condition on the gaps for the above Riemann--Hilbert problem.

\begin{rmk}
We close this section by remarking that a {\it general existence theory} for Riemann-Hilbert problem \ref{rhp:KdV}, providing existence of the time-dependent potential $u(x,t)$ under conditions described in Remark \ref{rmk:quasiperiod}, would be extremely valuable.  As observed in Remark \ref{rmk:quasiperiod}, this would imply that the entries in the first row of a solution to Riemann-Hilbert problem \ref{rhp:KdV} with these more general conditions solve Hill's equation, yielding the existence of a potential $u(x,t)$.  Thus, such an existence result would provide an alternative existence theory for periodic solutions of the KdV equation.  But more importantly, an existence theorem could apply in cases when the data does not correspond to a periodic potential, and one would obtain new results concerning existence of bounded solutions of the KdV equation that are not necessarily periodic.

\end{rmk}

\section{Conditions on Periodicity in Space and Time}
\label{sec:Periodicity}

In the previous sections we have shown that a bounded periodic potential is determined by the spectral data as described in Definition \ref{def:SpecData}, and established that, as $u$ evolves according to the KdV equation, it can be determined through the solution of Riemann--Hilbert problem \ref{rhp:KdV}.    In this section we consider our matrix Riemann--Hilbert problem \ref{rhp:KdV} (under the KdV evolution), which can be stated for any candidate spectral data $\{E_{k}, \mu_{k},\sigma_{k}\}$.  We suppose that it possesses a solution (which by our work must have a unique first row), and that it determines a potential $u(x,t)$, and we determine explicit conditions under which the potential is periodic in space, and also conditions under which the potential is periodic in time.

If the potential is known to be periodic, the classical theory of Hill's equation yields the existence of the Floquet multiplier $\rho(\lambda)$, which satisfies a scalar Riemann-Hilbert problem (conditions 1.(a)-(d) in Theorem \ref{thm:spaceperiod} below). We prove that the converse is true:  the existence of a solution to this scalar Riemann-Hilbert problem implies the potential is periodic.  Moreover, we prove there is an analogous scalar Riemann-Hilbert problem for which the existence of a solution implies temporal periodicity.

For this purpose, we consider a ``candidate spectral data,'' namely a sequence $\{E_{k}\}_{k=0}^{2 \mathcal{G}+1}$, a sequence $\{\mu_{n_k} \}_{k= 1}^{\mathcal{G}}$ and a sequence $\{\sigma_{n_k}\}_{k=1}^{\mathcal{G}}$ ($\mathcal{G}$ could be finite or infinite), satisfying
\begin{enumerate}
\item $E_{0} < E_{1} < \cdots $,
\item $\mu_{n_k} \in [E_{2k-1},E_{2k}]$,
\item $R = \max_{j} |E_{2j}-E_{2j-1}| < \infty$, 
\item for each $k$, $\sigma_{n_k} \in \{ -1,0,1\}$.
\item In case $\mathcal{G}$ is infinite:  there exists $N, C$ such that  $E_{n} > C n^{2}$ for all $n > N$.
\item In case $\mathcal{G}$ is infinite: there exists $K$ such that the discs $D_{n_k}$ of radius $R$ centered at ${(E_{2k}+E_{2k-1})/2}$ are disjoint for $k \ge K$.
\end{enumerate}

As explained in remark \ref{rmk:1}, with this candidate spectral data, all the quantities needed to define the Riemann-Hilbert problem \ref{rhp:KdV} are well defined, and we may pose the Riemann-Hilbert problem.
 We assume that there is a solution to this problem, and that it determines a potential $u(x,t)$.  
\begin{thm}
\label{thm:spaceperiod}
Suppose that there exists a solution to Riemann--Hilbert problem \ref{rhp:KdV} defined from the candidate spectral data above, determining the potential $u(x,t)$, via (\ref{eq:KdVrecon}).  
\begin{enumerate}
\item If there exists a function $r_1(\lambda)$ such that
\begin{enumerate} \item $r_1$ is holomorphic in $\mathbb{C} \setminus \sigma(L)$ with continuous boundary values $r_{1\pm}$ on $\sigma(L)$ from above and below,
\item $r_1$ satisfies the jump relation $r_{1+}(\lambda) = r_{1-}(\lambda)^{-1}$ for $\lambda \in \sigma(L)$,
\item $r_1$ satisfies the asymptotic condition
$r_1(\lambda) = e^{i \sqrt{\lambda} T_1}(1 + O(\sqrt{\lambda}^{-1}))$
for $\lambda \in \Omega_s$ for some $T_1 > 0$ and $0 < s <\frac{\pi}{8}$.
\item $r_1 \in \mathcal{A}_2(\mathcal{D})$,
\end{enumerate}
then $u(x+T_1,t) = u(x,t)$.
\item If there exists a function $r_2(\lambda)$ such that
\begin{enumerate} \item $r_2$ is holomorphic in $\mathbb{C} \setminus \sigma(L)$ with continuous boundary values $r_{2\pm}$ on $\sigma(L)$ from above and below,
\item $r_2$ satisfies the jump relation $r_{2+}(\lambda) = r_{2-}(\lambda)^{-1}$ for $\lambda \in \sigma(L)$,
\item $r_2$ satisfies the asymptotic condition
$r_2(\lambda) = e^{4i \sqrt{\lambda}^3 T_2}(1 + O(\sqrt{\lambda}^{-1}))$
for $\lambda \in \Omega_s$ for some $T_2 > 0$ and $0<s<\tfrac{\pi}{8}$,
\item $r_2 \in \mathcal{A}_2(\mathcal{D})$,
\end{enumerate}
then $u(x,t+T_2) = u(x,t)$.
\end{enumerate}
\end{thm}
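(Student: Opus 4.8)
The plan is to manufacture, from the hypothesized $r_{1}$, a second solution of Riemann--Hilbert problem \ref{rhp:KdV} with $x$ advanced to $x+T_{1}$, and then to apply uniqueness of the first row. The motivation: in the genuinely periodic case with period $T_{1}$ one has $\breve\psi^{\pm}(x+T_{1},t,\lambda)=\rho(\lambda)^{\pm1}\breve\psi^{\pm}(x,t,\lambda)$, so (the diagonal factors in (\ref{eq:defbPhi}) commuting) $\breve\Phi(x+T_{1},t,\lambda)=\breve\Phi(x,t,\lambda)\bigl(\rho(\lambda)^{-1}e^{i\sqrt{\lambda}T_{1}}\bigr)^{\sigma_{3}}$, and $\rho$ satisfies conditions 1.(a)--(d) with $T=T_{1}$. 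Accordingly I would set
\[
\widehat\Phi(x,t,\lambda):=\breve\Phi(x,t,\lambda)\,\bigl(r_{1}(\lambda)^{-1}e^{i\sqrt{\lambda}T_{1}}\bigr)^{\sigma_{3}}
\]
and show it solves Riemann--Hilbert problem \ref{rhp:KdV} with $x$ replaced by $x+T_{1}$. Conditions 1 and 2 hold because $r_{1}$ is holomorphic off $\sigma(L)\subset\mathbb{R}^{+}$ (hence across every gap) and, by (a)--(b), is bounded and nonzero at the band edges $E_{j}$ (just as $\rho(E_{j})=\pm1$), so the diagonal factor is holomorphic on $\mathbb{C}\setminus\mathbb{R}^{+}$ and at worst H\"{o}lder-$\tfrac12$ at the $E_{j}$, and $\widehat\Phi$ keeps the at-worst quartic-root singularities of $\breve\Phi$. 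Condition 4 uses (c): $r_{1}(\lambda)^{-1}e^{i\sqrt{\lambda}T_{1}}=1+O(\sqrt{\lambda}^{-1})$ in $\Omega_{s}$ and, being diagonal, commutes with $B(0,\lambda)$, so (\ref{eq:assphibrebe}) is unchanged. Condition 3 is the key computation: $\breve V$ is off-diagonal and $x$--independent on the bands, where the $\sigma_{3}$--conjugation is neutralized by $r_{1+}r_{1-}=1$ (condition (b)) and $\sqrt{\lambda}_{+}=-\sqrt{\lambda}_{-}$ on $\mathbb{R}^{+}$; on each gap $r_{1}$ is analytic while $e^{i\sqrt{\lambda}T_{1}}$ supplies exactly the factor $e^{2i\sqrt{\lambda}T_{1}\sigma_{3}}$ converting $\breve V(x,t,\cdot)$ into $\breve V(x+T_{1},t,\cdot)$.

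Condition 5 is the main obstacle: one needs $r_{1}^{\pm1}e^{\mp i\sqrt{\lambda}T_{1}}\in\mathcal{A}_{2}(\mathcal{D})$. For $r_{1}$ this is (d) together with $e^{-i\sqrt{\lambda}T_{1}}\in\mathcal{A}_{1/2}(\mathcal{D})$ and Propositions \ref{prop:pl0} and \ref{prop:pl2}. For $r_{1}^{-1}$ I would first use (b) to see, by reflection across the bands exactly as for $\rho$, that $r_{1}$ is zero-free on $\mathbb{C}\setminus\sigma(L)$, and then introduce $P(\lambda):=r_{1}(\lambda)\,\overline{r_{1}(\bar\lambda)}$, which is holomorphic and zero-free off $\sigma(L)$, has $|P|\equiv1$ on $\sigma(L)$ by (b), and satisfies $P(\lambda)\sim e^{2i\sqrt{\lambda}T_{1}}$ by (c); then $\log|P|$ is harmonic off $\sigma(L)$, vanishes on $\sigma(L)$, and is $O(|\lambda|^{1/2})$ at infinity, so $|P|^{-1}\le Me^{c|\lambda|^{1/2}}$, and combining with (d) applied to $|r_{1}(\bar\lambda)|$ gives $|r_{1}(\lambda)|\ge M^{-1}e^{-c|\lambda|^{2}}$ on $\mathcal{D}$, i.e. $r_{1}^{-1}\in\mathcal{A}_{2}(\mathcal{D})$.

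With conditions 1--5 in hand, the uniqueness statement of Theorem \ref{thm:rhpKdV}, applied to the problem at $x+T_{1}$, forces $\widehat\Phi$ and $\breve\Phi(x+T_{1},t,\cdot)$ to share their first row, so that $\breve\phi_{11}(x+T_{1},t,\lambda)=r_{1}(\lambda)^{-1}e^{i\sqrt{\lambda}T_{1}}\breve\phi_{11}(x,t,\lambda)$ as holomorphic functions. Put $F(\xi):=\lim_{\lambda\to\infty}\sqrt{\lambda}\bigl(1-b_{11}(0,\lambda)^{-1}\breve\phi_{11}(\xi,t,\lambda)\bigr)$, so $u(\xi,t)=2i\,\partial_{\xi}F(\xi)$ by (\ref{eq:KdVrecon}). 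Since $b_{11}(0,\lambda)^{-1}\breve\phi_{11}(x,t,\lambda)\to1$ in $\Omega_{s}$ and $r_{1}(\lambda)^{-1}e^{i\sqrt{\lambda}T_{1}}\to1$, the above identity gives $F(x+T_{1})-F(x)=\lim_{\lambda\to\infty}\sqrt{\lambda}\bigl(1-r_{1}(\lambda)^{-1}e^{i\sqrt{\lambda}T_{1}}\bigr)$, a constant independent of $x$; differentiating in $x$ yields $u(x+T_{1},t)=u(x,t)$, which is Part 1.

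Part 2 is identical after replacing $r_{1},T_{1}$ by $r_{2},T_{2}$, the spatial shift by the temporal shift $t\mapsto t+T_{2}$, and the conjugating factor by $\bigl(r_{2}(\lambda)^{-1}e^{4i\sqrt{\lambda}^{3}T_{2}}\bigr)^{\sigma_{3}}$: the $t$--dependence of $\breve V$ lives only in the gap entries $e^{8i\sigma_{3}\sqrt{\lambda}^{3}t}$, and $e^{4i\sqrt{\lambda}^{3}T_{2}}$ carries exactly the $\mathbb{R}^{+}$--branch jump that intertwines $\breve V(x,t,\cdot)$ with $\breve V(x,t+T_{2},\cdot)$; condition (c) for $r_{2}$ again gives the asymptotics and the $P$--function argument the subexponential bound, yielding $\breve\phi_{11}(x,t+T_{2},\lambda)=r_{2}(\lambda)^{-1}e^{4i\sqrt{\lambda}^{3}T_{2}}\breve\phi_{11}(x,t,\lambda)$ and hence $u(x,t+T_{2})=u(x,t)$. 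I expect essentially all the real work to lie in condition 5 for $\widehat\Phi$ (the zero-freeness of $r_{i}$ and the lower bound on $|r_{i}|$) and in pinning down the boundary behavior of $r_{i}$ at the band edges for condition 2; the rest is bookkeeping with diagonal factors and the branch of $\sqrt{\lambda}$.
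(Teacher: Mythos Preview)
Your approach is essentially the paper's: the paper defines $\tilde\Phi(x,t,\lambda)=\breve\Phi(x+T_{1},t,\lambda)\,r_{1}(\lambda)^{\sigma_{3}}e^{-i\sqrt{\lambda}\sigma_{3}T_{1}}$ and shows it solves Riemann--Hilbert problem~\ref{rhp:KdV} at $(x,t)$ (the inverse of your conjugation), then applies the first-row uniqueness of Theorem~\ref{thm:rhpKdV} and the reconstruction formula (\ref{eq:KdVrecon}) exactly as you do; the jump computations on bands and gaps match yours line for line, and Part~2 is handled identically with $r_{2}$ and $e^{-4i\sqrt{\lambda}^{3}\sigma_{3}T_{2}}$.

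The one substantive difference is Condition~5. The paper dispatches it in a single sentence (``Property~5 follows from the fact that $\mathcal{A}_{2}(\mathcal{D})$ is an algebra''), which does not by itself address the need for $r_{1}^{-1}\in\mathcal{A}_{2}(\mathcal{D})$ that the $\sigma_{3}$-power forces. You correctly flag this and sketch a harmonic/$P$-function argument for a lower bound on $|r_{1}|$; however, your zero-freeness claim for $r_{1}$ on $\mathbb{C}\setminus\sigma(L)$ does not obviously follow from hypotheses (a)--(d) alone (for $\rho$ it comes from Vieta's relation $\rho\cdot\rho^{-1}=1$, a structure a generic $r_{1}$ need not have). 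Both the paper---which explicitly calls Section~\ref{sec:Periodicity} a ``sketch''---and your proposal leave this point not fully justified, but you are more honest about where the real work lies.
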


Note that when $r_1(\lambda)$ exists it is equal to the Floquet multiplier $\rho(\lambda)$.

\begin{proof}
Suppose the $r_1(\lambda)$ exists and consider the function
\begin{equation} \tilde \Phi(x,t,\lambda) = \Phi(x+T_1,t,\lambda) r_1(\lambda)^{\sigma_3} e^{-i\sqrt{\lambda} \sigma_3 T_1}. \end{equation}
We would like to show $\tilde \Phi$ solves the same Riemann--Hilbert problem as $\Phi$.
Properties 1 and 2 are clear.
On the spectral band $[E_{2k-2},E_{2k-1}]$ the jump relation for $\tilde \Phi(x,t,\lambda)$ is computed as
\begin{align}
\tilde \Phi_+ (x,t,\lambda) & =  \Phi_+(x+T_1,t,\lambda) r_{1+}(\lambda)^{\sigma_3} e^{-i\sqrt{\lambda}_+ \sigma_3 T_1} \\
& = \Phi_-(x+T_1,t,\lambda)  (-1)^{k + m(\lambda) - 1} \begin{pmatrix} 0 & i \frac{f^+(\lambda)}{f^-(\lambda)} \\ i \frac{f^-(\lambda)}{f^+(\lambda)} & 0 \end{pmatrix} r_{1-}(\lambda)^{-\sigma_3} e^{-i\sqrt{\lambda} \sigma_3 T_1} \\
& = \tilde \Phi_-(x,t,\lambda) (-1)^{k + m(\lambda) - 1} \begin{pmatrix} 0 & i \frac{f^+(\lambda)}{f^-(\lambda)} \\ i \frac{f^-(\lambda)}{f^+(\lambda)} & 0 \end{pmatrix}
\end{align}
and on the gap $[E_{2k-1},E_{2k}]$ the jump is computed as
\begin{align}
\tilde \Phi_+(x,t,\lambda) & = \Phi_+(x+T_1,t,\lambda) r_1(\lambda)^{\sigma_3} e^{-i\sqrt{\lambda}_+ \sigma_3 T_1} \\
& = \Phi_-(x+T_1,t,\lambda) (-1)^{k + m(\lambda)} e^{2 i \sigma_3 \sqrt{\lambda} (x + T_1)+ 8 i \sigma_3 \sqrt{\lambda}^3 t} r_{1}(\lambda)^{\sigma_3} e^{-i\sqrt{\lambda} \sigma_3 T_1} \\
& = \tilde \Phi_-(x,t,\lambda) (-1)^{k + m(\lambda)} e^{2 i \sigma_3 \sqrt{\lambda} x + 8 i \sigma_3 \sqrt{\lambda}^3 t}.
\end{align}
Therefore $\tilde \Phi$ satisfies property 3.
Property 4 follows from the fact that
\begin{equation} r_1(\lambda)^{\sigma_3} e^{-i\sqrt{\lambda} \sigma_3 T_1} = 1 + O(\sqrt{\lambda}^{-1}). \end{equation}
Property 5 follows from the fact that $\mathcal{A}_2(\mathcal{D})$ is an algebra.

Theorem \ref{thm:rhpKdV} then tells us that for all $x$,$t$,
\begin{equation}  \tilde \Phi (x,t,\lambda) = \begin{pmatrix} 1 & 0 \\ \alpha & 1 \end{pmatrix} \breve \Phi (x,t, \lambda) \end{equation}
for some constant $\alpha$.
The first row $(\phi_{11}(x,t,\lambda),\phi_{12}(x,t,\lambda))$ of $\Phi$ is equal to the first row $(\tilde \phi_{11}(x,t,\lambda),\tilde \phi_{12}(x,t,\lambda))$ of $\tilde \Phi$.
In particular,
\begin{equation} \phi_{11}(x,t,\lambda) = \phi_{11}(x+T_1,t,\lambda) r_1(\lambda) e^{-i\sqrt{\lambda} T_1}. \end{equation}
Then (\ref{eq:KdVrecon}) implies $u(x+T_1,t) = u(x,t)$.

Now suppose that $r_2(\lambda)$ exists, and determine $\tilde \Phi$ by
\begin{equation} \tilde \Phi(x,t,\lambda) = \Phi(x,t+T_2,\lambda) r_2(\lambda)^{\sigma_3} e^{-4i\sqrt{\lambda}^3 \sigma_3 T_2}. \end{equation}
We would again like to show $\tilde \Phi$ solves the same Riemann--Hilbert problem as $\Phi$.
Properties 1 and 2 are clear.
On the spectral band $[E_{2k-2},E_{2k-1}]$ the jump relation for $\tilde \Phi(x,t,\lambda)$ is computed as
\begin{align}
\tilde \Phi_+ (x,t,\lambda) & =  \Phi_+(x,t+T_2,\lambda) r_{2+}(\lambda)^{\sigma_3} e^{-4i\sqrt{\lambda}_+^3 \sigma_3 T_2} \\
& = \Phi_-(x,t+T_2,\lambda)  (-1)^{k + m(\lambda) - 1} \begin{pmatrix} 0 & i \frac{f^+(\lambda)}{f^-(\lambda)} \\ i \frac{f^-(\lambda)}{f^+(\lambda)} & 0 \end{pmatrix} r_{2-}(\lambda)^{-\sigma_3} e^{-4i\sqrt{\lambda}^3 \sigma_3 T_2} \\
& = \tilde \Phi_-(x,t,\lambda) (-1)^{k + m(\lambda) - 1} \begin{pmatrix} 0 & i \frac{f^+(\lambda)}{f^-(\lambda)} \\ i \frac{f^-(\lambda)}{f^+(\lambda)} & 0 \end{pmatrix}
\end{align}
and on the gap $[E_{2k-1},E_{2k}]$ the jump is computed as
\begin{align}
\tilde \Phi_+(x,t,\lambda) & = \Phi_+(x+T_1,t,\lambda) r_2(\lambda)^{\sigma_3} e^{-4i\sqrt{\lambda}_+^3 \sigma_3 T_2} \\
& = \Phi_-(x+T_1,t,\lambda) (-1)^{k + m(\lambda)} e^{2 i \sigma_3 \sqrt{\lambda} x + 8 i \sigma_3 \sqrt{\lambda}^3( t+ T_2)} r_{2}(\lambda)^{\sigma_3} e^{-4i\sqrt{\lambda}^3 \sigma_3 T_2} \\
& = \tilde \Phi_-(x,t,\lambda) (-1)^{k + m(\lambda)} e^{2 i \sigma_3 \sqrt{\lambda} (x + T_1)+ 8 i \sigma_3 \sqrt{\lambda}^3 t}.
\end{align}
Therefore $\tilde \Phi$ satisfies property 3.
Property 4 follows from the fact that
\begin{equation} r_2(\lambda)^{\sigma_3} e^{-4i\sqrt{\lambda}^3 \sigma_3 T_2} = 1 + O(\sqrt{\lambda}^{-1}). \end{equation}
Property 5 follows from the fact that $\mathcal{A}_2(\mathcal{D})$ is an algebra.

Theorem \ref{thm:rhpKdV} then tells us that for all $x$,$t$,
\begin{equation}  \tilde \Phi (x,t,\lambda) = \begin{pmatrix} 1 & 0 \\ \alpha & 1 \end{pmatrix} \breve \Phi (x,t, \lambda) \end{equation}
for some constant $\alpha$.
Then
\begin{equation} \phi_{11}(x,t,\lambda) = \phi_{11}(x,t+T_2,\lambda) r_2(\lambda) e^{-4i\sqrt{\lambda}^3 T_2}, \end{equation}
so (\ref{eq:KdVrecon}) implies $u(x,t+T_2) = u(x,t)$.
\end{proof}

\section{Baker--Akhiezer Functions}

Up to this point, we have made no use of the underlying spectral curve.
However, the approach we have taken has an interpretation in terms of Baker--Akheizer functions.
We will now provide a corollary to the theorem that gives a geometric interpretation to solutions of Riemann--Hilbert problem \ref{rhp:KdV}.
The idea here is to connect the Riemann--Hilbert problem theory discussed in this paper back to what has been done before in terms of the theory of Riemann--Surfaces.
Let $\Sigma \subset \mathbb{C} \times \mathbb{C}$ be the the curve defined by
\begin{equation} \label{eq:polyeq} w^2 = P(\lambda) = \lambda \prod_{k = 1}^{ \mathcal{G}} \frac{T^4}{n_k^4 \pi^4}(\lambda - E_{2k-1})(\lambda - E_{2k}). \end{equation}
This curves is diffeomorphic via a holomorphic map to the desingularization of the curve defined by $w^2 = \Delta(\lambda)^2-4$ by 2 point blowups at the degenerate Dirichlet eigenvalues.
We choose not to compactify $\Sigma$ as a topological space because if $\mathcal{G} = \infty$ then the compactification of $\Sigma$ is not smooth at $\infty$ (this is because of an accumulation of infinitely many `holes' at $\infty$).
The projection $\pi((\lambda,w)) \to \lambda$ onto the $\lambda$ plane has two inverses under composition,
\begin{equation}
\pi_+^{-1} (\lambda) = (\lambda,\sqrt{P(\lambda)}),\end{equation}
\begin{equation} \pi_-^{-1}(\lambda) = (\lambda,-\sqrt{P(\lambda)}),
\end{equation}
which agree only at the branch points.
The images of $\pi_+^{-1}$ and $\pi_-^{-1}$ we will write as $\Sigma_+$ and $\Sigma_-$ so that $\Sigma$ is a union of $\Sigma_+$ and $\Sigma_-$. 
\begin{cor} \label{cor:BAfunction}
For every $x,t$ there is a unique meromorphic function $\breve \psi(x,t,p)$ on $\Sigma$ with only simple poles at $p_k = (\mu_{n_k},\sigma_{n_k} P(\mu_{n_k}))$ for $k = 1,2,\dots, \mathcal{G}$ such that
\begin{equation} \label{ex1} \breve \psi(x,t,\pi_+^{-1}(\lambda)) = e^{i \sqrt{\lambda} x + 4 i \sqrt{\lambda}^3 t}(1 + O(\sqrt{\lambda}^{-1})), \end{equation}
\begin{equation*}\breve \psi(x,t,\pi_-^{-1}(\lambda)) = e^{-i \sqrt{\lambda} x- 4 i \sqrt{\lambda}^3 t }(1 + O(\sqrt{\lambda}^{-1})) \end{equation*}
as $\lambda \to \infty$ with $\lambda \in \Omega_s$, and
\begin{equation} \label{ex2}
\breve \psi(x,t,\pi_+^{-1}(\cdot)) \in \mathcal{A}_2(\mathcal{D}),
\end{equation}
\begin{equation*}
\breve \psi(x,t,\pi_-^{-1}(\cdot)) \in \mathcal{A}_2(\mathcal{D}).
\end{equation*}
The function $\breve \psi$ is known as the Baker--Akhiezer function for the KdV equation.

\end{cor}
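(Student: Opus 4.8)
The plan is to realize $\breve\psi$ as the function on $\Sigma$ obtained by pasting the two normalized Bloch--Floquet-type solutions $\breve\psi^{\pm}$ of Section 6 along the two sheets, and then to deduce uniqueness from Theorem \ref{thm:rhpKdV}. Concretely, starting from a solution $\breve\Phi$ of Riemann--Hilbert problem \ref{rhp:KdV} (which exists by Theorem \ref{thm:rhpKdV}), put $\breve\psi^{-}(x,t,\lambda):=b_{11}(0,\lambda)^{-1}e^{-i\sqrt{\lambda}x-4i\sqrt{\lambda}^{3}t}\breve\phi_{11}(x,t,\lambda)$ and $\breve\psi^{+}(x,t,\lambda):=b_{22}(0,\lambda)^{-1}e^{i\sqrt{\lambda}x+4i\sqrt{\lambda}^{3}t}\breve\phi_{12}(x,t,\lambda)$; by the reconstruction identities these coincide with the functions $\breve\psi^{\pm}=\psi^{\pm}e^{\pm}$ of (\ref{eq:psitobrevepsi}), cf.\ (\ref{eq:solsfromRHP}) and (\ref{eq:KdVrecon}). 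Define $\breve\psi$ on $\Sigma$ by $\breve\psi(x,t,\pi_{+}^{-1}(\lambda)):=\breve\psi^{+}(x,t,\lambda)$ and $\breve\psi(x,t,\pi_{-}^{-1}(\lambda)):=\breve\psi^{-}(x,t,\lambda)$. First I would check that this is a single well-defined meromorphic function on $\Sigma$: holomorphy of each $\breve\psi^{\pm}$ off $\mathbb{R}^{+}$ is inherited from $\breve\phi$ and the roots in $B$; the off-diagonal part of the jump $\breve V$ on the bands (condition 3 of Riemann--Hilbert problem \ref{rhp:KdV}, together with $\Psi_{+}=\Psi_{-}\sigma_{1}$ on $\sigma(L)$) is exactly the statement that the two sheet-functions glue analytically across the branch cuts of $\Sigma$; and the diagonal part of $\breve V$ on the gaps is exactly compensated by the boundary-value discrepancies of $e^{\pm i\sqrt{\lambda}x}$, $e^{\pm 4i\sqrt{\lambda}^{3}t}$ and of the roots in $b_{11},b_{22}$ (the $(-1)^{k+m(\lambda)}$), so each $\breve\psi^{\pm}$ continues analytically through the interior of each gap.

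Next I would pin down the pole divisor using Proposition \ref{proper:e}(3): the time-dependent singularities of $\psi^{\pm}(\cdot,t,\cdot)$ at the moving Dirichlet eigenvalues $\mu_{n_{k}}(t)$ are cancelled by the zeros of $e^{\pm}(t,\cdot)$, leaving only the frozen singularities at $\mu_{n_{k}}(0)$; for $n\notin\{n_{k}\}$ the corresponding degenerate point is a smooth point of $\Sigma$ where $\alpha^{\pm}$, hence $e^{\pm}$, is analytic and nonzero and where $\breve\psi^{\pm}$ is regular. When $\sigma_{n_{k}}(0)=\pm1$ the surviving simple pole sits on $\psi^{\pm}$, hence on the sheet $\Sigma_{\pm}$, i.e.\ at $p_{k}$; when $\sigma_{n_{k}}(0)=0$ the point $\mu_{n_{k}}(0)$ is a band edge, hence a branch point of $\Sigma$, and the local $w$-coordinate analysis already carried out in the proof of Proposition \ref{proper:e}(3) turns the $\lambda$-plane square-root singularity into an ordinary simple pole at $p_{k}=(\mu_{n_{k}}(0),0)$ on $\Sigma$. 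This yields exactly the asserted divisor $\sum_{k}p_{k}$. The asymptotic normalization (\ref{ex1}) is then (\ref{eq:assbpsi}) from the proof of Proposition \ref{proper:e}(4), and the Phragm\'en--Lindel\"of bound (\ref{ex2}) follows from $\breve\psi^{\pm}=\psi^{\pm}e^{\pm}$ together with $\psi^{\pm}(x,\cdot)\in\mathcal{A}_{1}(\mathcal{D})$ (eq.\ (\ref{eq:psibound})) and $e^{\pm}(t,\cdot)\in\mathcal{A}_{2}(\mathcal{D})$ (Proposition \ref{proper:e}(5)), since $\mathcal{A}_{2}(\mathcal{D})$ is an algebra.

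For uniqueness I would reduce to Theorem \ref{thm:rhpKdV}. Given any $\breve\psi'$ on $\Sigma$ satisfying the hypotheses, set $\hat\psi^{+}(x,t,\lambda):=\breve\psi'(x,t,\pi_{+}^{-1}(\lambda))$, $\hat\psi^{-}(x,t,\lambda):=\breve\psi'(x,t,\pi_{-}^{-1}(\lambda))$ and assemble
\begin{equation}
\hat\Phi(x,t,\lambda):=\begin{pmatrix}\hat\psi^{-} & \hat\psi^{+}\\ \hat\psi^{-}_{x} & \hat\psi^{+}_{x}\end{pmatrix}B(0,\lambda)\,e^{i\sigma_{3}\sqrt{\lambda}x+4i\sigma_{3}\sqrt{\lambda}^{3}t}.
\end{equation}
Running the gluing computation of the first paragraph in reverse shows $\hat\Phi$ satisfies conditions 1 and 3 of Riemann--Hilbert problem \ref{rhp:KdV}; condition 2 at the $E_{j}$ holds because $\breve\psi'$ is meromorphic on $\Sigma$ and $b_{11},b_{22}$ contribute at worst quartic roots; conditions 4 and 5 for the first row come from (\ref{ex1}) and (\ref{ex2}), and for the second row from the corresponding statements for $\hat\psi^{\pm}_{x}$. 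Theorem \ref{thm:rhpKdV} then forces the first row of $\hat\Phi$ to coincide with that of $\breve\Phi$, whence $\hat\psi^{\pm}=\breve\psi^{\pm}$ and $\breve\psi'=\breve\psi$.

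The main obstacle is this last step: promoting an \emph{a priori} merely-meromorphic-on-$\Sigma$ candidate $\breve\psi'$ to a bona fide solution of the matrix Riemann--Hilbert problem requires $\breve\psi'$ to be $C^{1}$ in $x$ with $\hat\psi^{\pm}_{x}$ inheriting the sector asymptotics (by differentiating (\ref{ex1})) and the $\mathcal{A}_{2}(\mathcal{D})$ bound; this is automatic for the $\breve\psi$ built from $\breve\Phi$ via the Section 6 construction, but for the abstract candidate it must either be built into the hypotheses or derived. An alternative avoiding the second row would be to run the Phragm\'en--Lindel\"of/Liouville argument of Theorem \ref{thm:rhpPhi} directly on the scalar difference $b_{11}(0,\lambda)^{-1}(\hat\phi_{11}-\breve\phi_{11})$ glued across the cuts, but then one must carefully track the growth of $b_{11}$. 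The remaining ingredients --- the sign/branch bookkeeping making the gluing exactly $\breve V$, and the $w$-coordinate treatment of the degenerate-gap poles --- are routine given the machinery already in place in Sections 5 and 6.
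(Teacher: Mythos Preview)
Your existence argument matches the paper's: define $\breve\psi$ by pasting $\breve\psi^{\pm}$ on the two sheets of $\Sigma$, and read off the pole divisor, the sectorial asymptotics, and the $\mathcal{A}_{2}(\mathcal{D})$ bound from the Section 5--6 machinery exactly as you outline.

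For uniqueness, however, the paper sidesteps the obstacle you correctly flag. You build $\hat\Phi$ with second row $(\hat\psi^{-}_{x},\hat\psi^{+}_{x})$, which forces you to assume or derive $x$-differentiability of the abstract candidate $\breve\psi'$ together with sectorial asymptotics and an $\mathcal{A}_{2}$ bound for its derivative---none of which is in the hypotheses of the corollary. The paper instead keeps the \emph{known} second row $((\breve\psi^{-})',(\breve\psi^{+})')$ and places in the first row the averages of the candidate with the known solution:
\[
\breve\Psi^{(2)}(x,t,\lambda)=\begin{pmatrix}\tfrac{1}{2}\bigl(\tilde\psi(x,t,\pi_{-}^{-1}(\lambda))+\breve\psi^{-}\bigr)&\tfrac{1}{2}\bigl(\tilde\psi(x,t,\pi_{+}^{-1}(\lambda))+\breve\psi^{+}\bigr)\\[2pt] (\breve\psi^{-})'(x,t,\lambda)&(\breve\psi^{+})'(x,t,\lambda)\end{pmatrix}.
\]
The first row then inherits conditions 1--5 of Riemann--Hilbert problem \ref{rhp:KdV} directly from the hypotheses on $\tilde\psi$ (together with those already known for $\breve\psi^{\pm}$), while the second row satisfies them by construction. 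Theorem \ref{thm:rhpKdV} pins down the first row, giving $\tfrac{1}{2}(\tilde\psi^{\pm}+\breve\psi^{\pm})=\breve\psi^{\pm}$ and hence $\tilde\psi=\breve\psi$. No regularity of the candidate in $x$ is required, and your scalar-difference alternative is unnecessary.
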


Note that while the pole condition on the $\Sigma$ can be defined in a coordinate independent manner, the asymptotic condition (\ref{ex1}) and bound (\ref{ex2}) are expressed in terms of the complex $\lambda$ coordinate on $\Sigma$.

\begin{proof}[Sketch of Proof]
If $\psi$ is defined by
\begin{equation} \label{ex3} \breve \psi(x,t,p) = \begin{cases} \breve \psi^+(x,t,\pi(p))  & p \in \Sigma^+ \\ \breve \psi^-(x,t,\pi(p)) & p \in \Sigma^-  \end{cases}
\end{equation}
then:
\begin{enumerate}
\item The poles of either $\breve \psi^+$ or $\breve \psi^-$ at the Dirichlet eigenvalues in the interior gaps, and the square root singularities of both $\breve \psi^+$ and $\breve \psi^-$ on the edges of the non-degenerate gaps map to poles on $p_k = (\mu_{n_k},\sigma_{n_k} P(\mu_{n_k}))$.
\item The asymptotic expansions of $\breve \psi^\pm$ obtained by multiplying the asymptotic expansion from lemma \ref{lma:BFass} by the asymptotic expansion of $e^\pm$ immediately imply the asymptotic condition (\ref{ex1}).
\item The containments (\ref{ex2}) follow immediately from (\ref{eq:psibound}).
\end{enumerate}
To show that $\psi$ is the only such function, suppose there is a second such function $\tilde\psi(x,t,p)$.
It is easy to show that if
\begin{equation} \breve \Psi^{(2)}(x,t,\lambda) := \begin{pmatrix} \frac{1}{2}  (\tilde \psi(x,t,\pi_-^{-1}(\lambda)) + \breve \psi^-(x,t,\lambda)) &  \frac{1}{2} (\tilde \psi(x,t,\pi_+^{-1}(\lambda)) + \breve\psi^+(x,t,\lambda))\\ ({\breve \psi^-})'(x,t,\lambda) & ({\breve \psi^+})'(x,t,\lambda) \end{pmatrix}  \end{equation}
then
\begin{equation} \breve{\Phi}^{(2)}(x,t,\lambda) := \breve{\Psi}^{(2)}(x,t,\lambda) B(\lambda) e^{i \sigma_3 \sqrt{\lambda} x + 4 i \sigma_3 \sqrt{\lambda}^3 t}
\end{equation}
must solve Riemann--Hilbert problem \ref{rhp:KdV}.
Therefore, by Theorem \ref{thm:rhpKdV} the first row of $\breve{\Phi}^{(2)}(x,t,\lambda)$ is equal to the first row of $\breve\Phi(x,t,\lambda)$.
Setting them equal implies that $\tilde \psi = \breve \psi$.
\end{proof}

\section{Finite Gap Solutions and Riemann Hilbert Problem}

In the finite gap case (\ref{eq:polyeq}) can be replaced by
\begin{equation} w^2 =\lambda \prod_{n = 1}^{2g} (\lambda-E_n) \end{equation}
and produce an equivalent Riemann--surface $\Sigma $.
The $\frac{T^4}{n_k^4 \pi^4}$ was only necessary to guarantee convergence in the infinite gap case. \\

Before moving on we review some elementary algebraic geometry following \cite{Dubrovin81}.
We introduce a homology basis $a_i,b_j$ for $j = 1,\dots,g$ satisfying $a_i \circ b_j = \delta_{ij}$, $a_i \circ a_j=0$ and $b_i\circ b_j=0$, where $\circ$ indicates minimal crossing number in the homology class of $a_i$ and $b_j$.
We then introduce the basis of Abelian differentials of the first kind on $\Sigma$ normalized such that
\begin{equation} \int_{a_j} \omega_i = 2 \pi i \delta_{ij}. \end{equation}
The Abel map ${A}:\Sigma \to \mathbb{C}^g$ with base point $p_0$ is given by
\begin{equation} {A}(p) = \int_{p_0}^p {\omega}. \end{equation}
The Abel map extends to a mapping of divisors by adding together the Abel maps of the points in the divisor.

The Riemann matrix $\tau$ for $\Sigma$ is
\begin{equation} \tau_{ij} = \int_{b_j} \omega_i. \end{equation}
The Riemann theta function corresponding to Riemann 
matrix $\tau$ is
\begin{equation}
\theta({z},\tau) = \sum_{{m} \in \mathbb{Z}^g} \exp\left(\left<  {m}, {z} \right> + \frac{1}{2} \left< {m}, \tau {m} \right> \right).
\end{equation}
The Riemann matrix is symmetric with negative definite real part.
We also introduce the Abelian differentials of the second kind $\omega^{(j)}$ for $j = 1,2,3,\dots$ uniquely determined by the principle part
\begin{equation} \omega^{(j)} \sim d \sqrt{\lambda}^j \end{equation}
as $\lambda \to \infty$, where $d$ is the exterior derivative, and the condition
\begin{equation} \int_{a_k} \omega^{(j)} = 0. \end{equation}

We now consider how the algebraic geometry so far discussed relates to finite gap potentials.
Let us consider the function
\begin{equation} \label{eq:formBAfun} { \breve \psi(x,t,p) = \exp \left( i\int_{p_0}^p  (\omega^{(1)} x + 4  \omega^{(3)} t) \right) \frac{\theta({A}(p) - {A}(\mathcal{P}) + i {\Omega}^{(1)} x + 4 i {\Omega}^{(3)} t  - {K},\tau)\theta(- {A}(\mathcal{P}) - {K},\tau)}{\theta({A}(p) - {A}(\mathcal{P}) - {K},\tau) \theta( - {A}(\mathcal{P}) + i {\Omega}^{(1)} x + 4 i {\Omega}^{(3)} t  - {K},\tau)},}\end{equation}
where
\begin{equation} \Omega_j^{(\ell)} = \int_{b_j} \omega^{(\ell)},\end{equation}
and
\begin{equation} K_j = \frac{2 \pi i + \tau_{jj}}{2} - \frac{1}{2 \pi i} \sum_{\ell \ne j} \int_{a_\ell} \left( \omega_{\ell}(p) \int_{p_0}^p \omega_j \right)\end{equation}
is the vector of Riemann constants, and $\mathcal{P}$ is the divisor of poles consisting of the direct sum of the points $p_k$ defined in corollary \ref{cor:BAfunction}.
The function (\ref{eq:formBAfun}) satisfies the properties of the finite genus Baker--Akhiezer function \cite{Dubrovin81}, and the values of $\breve \psi$ on $\Sigma^+$ and $\Sigma^-$ can be projected back onto the plane to produce two functions $\breve \psi^+$ and $\breve \psi^-$.
The function $\breve \psi^\pm$ can be used to create a solution $\breve \Phi$ to Riemann--Hilbert problem \ref{rhp:KdV} via (\ref{eq:defbPhi}).
The solution to the KdV equation recovered from the Baker--Akhiezer function is given by the Matveev--Its formula
\begin{equation} \label{eq:MI} u(x,t) = -2 \frac{\partial^2}{\partial x^2} \log \theta( i {\Omega}^{(1)} x + i {\Omega}^{(3)} t - {A}(\mathcal{P}) - {K} ; \tau ) + c \end{equation}
where $c$ is a constant \cite{Dubrovin81}.
At fixed time $t$, the functions $\breve \psi^\pm(x,t)$ are solutions to Hill's equation with potential $u(x,t)$.
The function $c$ can be changed by translating the function values of the potential or equivalently by translating the spectrum itself.

Summarizing the finite gap construction, one starts with a hyper-elliptic Riemann--Surface and produces both a solution to Hill's equation (\ref{eq:formBAfun}), and the potential in Hill's equation (\ref{eq:MI}).
Continuing, one may also use (\ref{eq:formBAfun}) to construct a solution to the Riemann-Hilbert problem \ref{rhp:KdV}, from which the potential (\ref{eq:MI}) may also be obtained as in (\ref{eq:KdVrecon}).

In connection with Section \ref{sec:Periodicity}, we note that here, the potential $u(x,t)$ is {not assumed to be periodic}.   Indeed, if the half phases in the vector ${ \Omega}^{(1)}$ satisfy
\begin{eqnarray}
\label{eq:Commensurate}
\Omega_j^{(1)} = \frac{\ell_j \pi}{T_1}, \mbox{ for some constant $T_1$ and integers $\ell_j$ with $j = 1,2,\dots,g$}
\end{eqnarray}
then the potential $u(x,t)$ so constructed is periodic with period $T_{1}$, and the Floquet multiplier $\rho(\lambda)$ exists, yielding a solution to the scalar Riemann-Hilbert problem of Theorem \ref{thm:spaceperiod}.  However, if (\ref{eq:Commensurate}) does not hold, one says that the phases are not commensurate, and the potential is quasi-periodic.  It is still the case that the Riemann-Hilbert problem possesses a solution, and this solution determines the potential $u(x,t)$, it is just that the potential is not periodic in space.
If the scalar Riemann-Hilbert of Theorem \ref{thm:spaceperiod} were solvable, then the potential would be periodic by Theorem \ref{thm:spaceperiod}.
Therefore, we can conclude that the scalar Riemann-Hilbert problem of Theorem \ref{thm:spaceperiod} is not solvable in the non commensurate case.

If the phases are commensurate (i.e. if (\ref{eq:Commensurate}) holds), then the Floquet multiplier $r_{1}(\lambda) = \rho(\lambda)$ can be constructed explicitly. This is achieved as follows.  

Let us consider the function
\begin{equation}
f(x,t,\lambda) =  x \int_{p_0}^\lambda \omega^{(1)}  + 4 t\int_{p_0}^{\lambda} \omega^{(3)}
\end{equation}
where we identify $\lambda$ with a corresponding point in $\Sigma^+ \subset \Sigma$.
The base point $p_0$ can be set so that
\begin{equation} f_+(x,t,\lambda) + f_-(x,t,\lambda) = 0\end{equation}
for $\lambda \in \sigma(L)$, and
\begin{equation} f_+(x,t,\lambda) - f_-(x,t,\lambda) = \int_{b_j} \omega^{(1)} x + 4 \omega^{(3)} t =  \Omega_j^{(1)} x + 4 \Omega_j^{(3)} t \end{equation}
for $\lambda \in [E_{2j-1},E_{2j}]$.
The first can be seen by drawing contours $\gamma^\pm$ from $\infty$ to $\lambda$ in $\mathbb{C}^+$ and $\mathbb{C}^-$ and hit the opposite sides of the branch cut.
If we switch the orientation $\gamma^-$ and switch to the sheet $\Sigma^-$ we get the same result upon integration.
This new contour in conjunction with $\gamma^+$ can be put together to produce a closed loop.

Now if (\ref{eq:Commensurate}) holds, then using $T_{1}$ and $f$ we can construct the function
\begin{equation} \rho(\lambda) = e^{i f(T_1,0,\lambda)}\end{equation}
that is holomorphic in $\mathbb{C} \setminus \sigma(L)$ and $\rho_+(\lambda)  = \rho_-(\lambda)^{-1}$ and $\log(\rho(\lambda)) = i T_1 \sqrt{\lambda}$.
The properties on $\rho$ are precisely the properties of the Floquet discriminant $\rho(\lambda) = r_{1}(\lambda)$ enumerated in Theorem \ref{thm:spaceperiod}.

Alternatively, consider the case that 
\begin{eqnarray}
\label{eq:TimeCommensurate}
4 \Omega_j^{(3)} = \frac{\ell_j \pi}{T_2}, \mbox{ for some constant $T_1$ and integers $\ell_j$ with $j = 1,2,\dots,g$}\ .
\end{eqnarray}
In this case, we can construct the function
\begin{equation} r_2(\lambda) = e^{i f(0,T_2,\lambda)}. \end{equation}
This function is holomorphic in $\mathbb{C} \setminus \sigma(L)$ and $r_{2+}(\lambda)  = r_{2-}(\lambda)^{-1}$ and $\log(r_2(\lambda)) = 4i T_2 \sqrt{\lambda}^3$.
These properties of $r_2(\lambda)$ are the properties of the function $r_{2}$ appearing in Theorem \ref{thm:spaceperiod}, and we learn that the potential $u(x,t)$ appearing in (\ref{eq:MI}) is periodic in $t$ with period $T_{2}$, which is consistent.

\section*{Funding}

KM was supported in part by the National Science Foundation under grant DMS-1733967. PN was supported in part by the National Science Foundation under grant DMS-1715323.

\section*{Acknowledgment} The authors would like to thank the reviewers for their detailed reports; their recommendations improved this paper. Much of the research was conducted when PN was a PhD student at the University of Arizona.

\end{document}